\documentclass[11p]{article}
\usepackage[a4paper, total={6in, 9in}]{geometry}

\bibliographystyle{plain}

\usepackage[reqno]{amsmath}
\makeatletter
\newcommand{\leqnomode}{\tagsleft@true}
\newcommand{\reqnomode}{\tagsleft@false}
\makeatother

\usepackage{amssymb}
\usepackage{mathtools}
\usepackage{bbm}
\usepackage{tikz}
\usetikzlibrary{patterns}
\usepackage{hyperref}
\usepackage{amsthm}
\usepackage{wrapfig}
\usepackage{float}
\usepackage{booktabs}
\usepackage{tablefootnote}
\usepackage{comment}
\usepackage{nccmath}
\usepackage{pdflscape}
\usepackage{subcaption}
\usepackage{todonotes}
\usepackage{siunitx}

\usepackage{qcircuit}

\newtheorem{theorem}{Theorem}[section] 
\newtheorem{definition}[theorem]{Definition} 
\newtheorem{lemma}[theorem]{Lemma}

\newtheorem{corollary}[theorem]{Corollary}

\theoremstyle{definition}

\newtheorem{remark}[theorem]{Remark}

\allowdisplaybreaks
\usepackage{algpseudocode}
\usepackage{algorithm}
\usepackage{url}


\newcommand{\sub}{\text{sub}}

\newcommand{\Span}{\text{Span}}

\newcommand{\Aut}{\normalfont \text{Aut}}

\newcommand{\Stab}{\normalfont \normalfont \text{Stab}}

\newcommand{\Cay}{\normalfont \text{Cay}}

\newcommand{\Sym}{\normalfont \text{Sym}}

\newcommand{\Orb}{\normalfont \text{Orb}}
\newcommand{\id}{\normalfont \text{id}}
\newcommand{\Coup}{\normalfont \text{Coup}}

\newcommand{\Autbold}{\textbf{Aut}}
\newcommand{\Caybold}{\textbf{Cay}}
\newcommand{\Coupbold}{\textbf{Coup}}

\begin{document} 

\title{Exploiting Symmetries in Optimal Quantum Circuit Design}
\author{Frank de Meijer \thanks{Corresponding author. Delft Institute of Applied Mathematics, Delft University of Technology, The Netherlands, {\tt f.j.j.demeijer@tudelft.nl}} \and {Dion Gijswijt} \thanks{Delft Institute of Applied Mathematics, Delft University of Technology, The Netherlands, {\tt d.c.gijswijt@tudelft.nl}}
	\and {Renata Sotirov}  \thanks{CentER, Department of Econometrics and OR, Tilburg University, The Netherlands, {\tt r.sotirov@uvt.nl}}}
\date{}
\maketitle

\begin{abstract}
A physical limitation in quantum circuit design is the fact that gates in a quantum system can only act on qubits that are physically adjacent in the architecture. To overcome this problem, SWAP gates need to be inserted to make the circuit physically realizable. The nearest neighbour compliance problem (NNCP) asks for an optimal embedding of qubits in a given architecture such that the total number of SWAP gates to be inserted is minimized. 
In this paper we study the NNCP on general quantum architectures. Building upon an existing linear programming formulation, we show how the model can be reduced by exploiting the symmetries of the graph underlying the formulation. 
The resulting model is equivalent to a generalized network flow problem and follows from an in-depth analysis of the automorphism group of specific Cayley graphs. As a byproduct of our approach, we show that the NNCP is polynomial time solvable for several classes of symmetric quantum architectures. Numerical tests on various architectures indicate that the reduction in the number of variables and constraints is on average at least~90\%. In particular, NNCP instances on the star architecture can be solved for quantum circuits up to 100 qubits and more than 1000 quantum gates within a very short computation time. These results are far beyond the computational capacity when solving the instances without the exploitation of symmetries. 
\end{abstract}
\hfill \break
\textit{\textbf{Keywords:}} Quantum computing, nearest neighbour constraints, symmetry reduction, Cayley graphs, fixed point subspace, generalized network flow problem

\section{Introduction}
The most commonly used model for quantum computation is that of the gated quantum computer, where a calculation is performed by executing so-called quantum circuits. A quantum circuit acts on multiple quantum bits, i.e., qubits, 
\textcolor{black}{which are logical units of operation.}
Whereas classical bits exclusively take the Boolean values zero or one, qubits can be in a superposition state, which upon measurement are displayed as zero or one with a certain probability. A quantum circuit sequentially acts on the qubits via quantum gates, which are unitary transformations that sequentially adjust the state of one or more qubits to perform an operation. 
Quantum circuits extend on the gate model for classical computing, and hence, a quantum computer can perform any computation that a classical computer can perform~\cite{NielsenChuang}. However, based on quantum phenomena such as superposition and entanglement, a quantum system is able to perform a much broader spectrum of operations. For an extensive overview of the advances and applications of quantum computing, see e.g.,~\cite{Montanaro}.

\textcolor{black}{Although there do exist quantum hardwares that act on more than two qubits simultaneously~\cite{rajakumar2022}}, many physical implementations of quantum gates operate on only one or two qubits at a time~\cite{HaffnerEtAl,NielsenChuang,Qiskit}. In this setting, gates that act on more than two qubits therefore need to be realized as a sequence of gates of size at most two, which, fortunately, is possible for any quantum gate~\cite{NielsenChuang}. For instance, the set of one-qubit gates and two-qubit controlled-NOT gates is universal~\cite{BarencoEtAl}, meaning that this set is sufficient to perform any quantum computation. 

The qubits in a quantum system are physically embedded in a certain design, i.e., the quantum architecture. This architecture is commonly represented as a coupling graph, where the vertices represent the qubits and an edge is drawn between two qubits whenever the qubits can communicate in the quantum system. With ``communicate", we refer to the possibility to apply a gate to the two qubits and consequently affect their simultaneous state.
Among the special coupling graphs considered in the literature are the linear array, see e.g.,~\cite{Bhattacharjee, ChengEtAl, HirataEtAl, KoleEtAl, MulderijEtAl}, the two-dimensional grid, see e.g.,~\cite{AlfailakawiEtAl,Bhattacharjee2, ChoiVanMeter}, the three-dimensional grid \cite{FarghadanMohammadzadeh}, the IBM QX architecture, see e.g.,~\cite{WilleEtAl}, but also general coupling graphs~\cite{BhattacharjeeGeneral,SiraichiEtAl,VenturelliEtAl,LiEtAl, CowtanEtAl, ItokoEtAl}. 

A physical limitation of the architecture is that two-qubit gates can only be applied when the qubits are physically adjacent to each other in the coupling graph. These restrictions are known as nearest neighbour constraints and have been subject of interest in the design of quantum realizations of specific circuits, see e.g.,~\cite{FowlerEtAl}, or the design of quantum architectures itself, see~\cite{MulderijEtAl} and the references therein. Instead of research on quantum realizations that comply with the nearest neighbour constraints, we can also disregard these constraints at first and alter existing quantum circuits to make them feasible, which will be the followed approach in this paper. 

A quantum circuit can be made compliant with respect to the nearest neighbour constraints by the insertion of SWAP gates. A SWAP gate acts on two adjacent qubits by interchanging their location in the coupling graph\footnote{Strictly speaking, a SWAP gate does only interchange the state of the involved qubits, while the actual hardware entities remain unchanged in the architecture.}. If the coupling graph is connected, any quantum circuit can be made compliant by the insertion of a finite number of SWAP gates and there are often many ways to do so. However, due to a qubit's interaction with its environment \cite{DiVincenzo},  quantum systems currently still suffer from physical instability of qubit states after some period of time. This raises the desire for quantum circuits with as few gates as possible. We therefore prefer to add the minimum number of SWAP gates in order to make a circuit compliant. 

Given a quantum circuit and a coupling graph, the nearest neighbour compliance problem (NNCP) asks for an optimal sequential allocation of the qubits over the quantum architecture such that the total number of SWAP gates to be inserted is minimized. With ``sequential", we refer to the decision variables to not only concern the initial allocation, but also the actual SWAP operations that take place over time. The NNCP was proven to be $\mathcal{NP}$-hard via a reduction from the token swapping problem~\cite{SiraichiEtAl}. 

Most research on the NNCP has been on heuristic methods, such as greedy methods~\cite{AlfailakawiEtAl, HirataEtAl}, harmony search~\cite{AlfailakawiEtAl}, optimal linear arrangement~\cite{PedramShafaei} and receding horizon methods~\cite{HirataEtAl, KoleEtAl, ShafaeiEtAl, WilleEtAl2}. Exact approaches to tackle the NNCP include exhaustive search~\cite{DingYamashita,HirataEtAl}, explicit cost enumeration~\cite{WilleEtAl3} and linear programming (LP) based methods on the adjacent transposition graph~\cite{MatsuoYamashita,Mulderij}. All these methods embrace an implicit factorial scaling in the number of qubits, due to the inherited total number of possible assignments of the qubits. Recently, also polynomial sized models have been considered that are based on mixed-integer linear programming~\cite{MulderijEtAl, VanHouteEtAl}. The construction considered in~\cite{MulderijEtAl} is based on the linear array coupling graph, while the models in~\cite{VanHouteEtAl} consider ordering problems for distributed quantum computing. An integer programming approach for a very related, but slightly more general problem called the qubit routing problem, has been considered in~\cite{NanniciniEtAl}.
Other research focuses on a related version of the NNCP, where an initial qubit ordering has to be realized that minimizes the (approximated) number of SWAP operations, without actually considering the exact insertions into the quantum circuit, see~\cite{ShafaeiEtAl2, KoleEtAl2,KoleEtAl3}. 

\medskip

Building upon the shortest path formulation considered in~\cite{MatsuoYamashita,Mulderij}, a main feature of our approach concerns the exploitation of symmetries in the model. The literature on symmetry reduction methods in mathematical optimization is extensive, and we refer the reader to~\cite{Liberti, MargotOverview} for comprehensive overviews in this direction. It is well-known that symmetries in integer linear programming (ILP) problems lead to poor behaviour of numerical algorithms, due to the costly duplication of computational effort in branching approaches. To reduce this negative effect, symmetries need to be broken, e.g., by perturbation, symmetry-breaking inequalities (e.g.,~\cite{MargotInequalities2003}) or specialized branching techniques (e.g.,~\cite{SheraliSmith}). The literature on symmetry reduction for integer linear programs (ILPs) can be distinguished between problem-based approaches, whose symmetry groups are known a priori (see e.g.,~\cite{LeeMargot}), or generic techniques. The latter class on one hand contains methods based on branching tree reductions, such as isomorphism pruning~\cite{MargotPruning2002, MargotOrbits2003} and orbital branching~\cite{OstrowskiEtAl}. Alternative methods mainly consider symmetry-handling constraints to restrict the feasible region of an optimization problem by eliminating symmetric solutions. Two well-known streams in this direction are the utilization of  orbitopes~\cite{KaibelPfetsch} and fundamental domains~\cite{Friedman}. Branching tree reductions and symmetry-handling constraints can also be combined, see e.g.,~\cite{VanDoornmalenHojny}.  

When considering symmetry reduction methods for linear programs, a major research line considers the study of symmetric polyhedra, see~\cite[Section~6]{MargotOverview} and the references therein. Another research line considers the exploitation of symmetries in the simplex \mbox{algorithm~\cite{TuckerMatrixGame,TuckerCombinatorial}}. B\"odi et al.~\cite{BodiEtAl} consider the exploitation of symmetries in linear programs by restricting to the subspace of fixed points under a linear map induced by the symmetries in the program. This approach can be generalized to convex programs and is closely related to the invariant-based symmetry reduction approaches applied to conic and semidefinite programs, see e.g., Gatermann and Parrilo~\cite{GatermannParrilo}, to which our reduction method also belongs.

\subsection*{Main results and outline}
In this paper we consider the nearest neighbour compliance problem on general coupling graphs. Following the linear programming (LP) formulation derived in~\cite{MatsuoYamashita}, we analyse the group symmetry of the underlying graph, which is a sequence of connected Cayley graphs. By exploiting these symmetries, we reduce the LP model in the number of variables and constraints, leading to a symmetry-reduced formulation for solving the NNCP. We show the theoretical and practical strength of our approach for several classes of symmetric coupling graphs for which the reduction is most significant, namely the graphs that embrace a large automorphism group. 

The LP formulation of~\cite{MatsuoYamashita} can be viewed as a single-pair shortest path problem on a directed graph that we refer to as the graph $X = (V,A)$. \textcolor{black}{The graph $X$ is composed of layers.} As a first step in our approach, we consider the automorphism group of the subgraphs of $X$ \textcolor{black}{induced by each layer}. Each subgraph is a Cayley graph of the symmetric group~$\mathbb{S}_n$ generated by the edges in the coupling graph of the quantum architecture. We derive the full automorphism group of such Cayley graphs, after which we extend these automorphism results to derive the automorphism group $G_X$ of $X$. We also study the orbit and orbital structure of the group action of $G_X$ on $X$. The results on the group structure of these Cayley graphs are in itself interesting, as such graphs are of main importance in interconnection networks~\cite{GanesanOverview,Heydemann}.

By averaging over each orbital of the action of $G_X$ on $X$ via the Reynolds operator, we show how the LP formulation can be reduced following the approach of~\cite{BodiEtAl}. We show that the resulting reduced LP formulation is  equivalent to a generalized network flow problem on an auxiliary graph following from our construction. For symmetric coupling graphs, this reduced LP formulation is significantly smaller in size. As a byproduct of our approach, we show that the NNCP is polynomial time solvable for coupling graph whose automorphism group scales factorially in the number of qubits, e.g., the star graph or complete bipartite graphs with one of the sizes fixed. The construction of the reduced LP formulation follows completely from the algebraic analysis of $X$ and does not rely on the use of any external algebraic software.

Although the ingredients of our approach are presented generally, we explicitly show how the reduced LP can be constructed for three special graph types: the cycle graph, the star graph and the biclique graph. For each of these classes, we show how the orbital structure unfolds by analyzing a specific subgroup of the automorphism group of the coupling graphs.

Finally, we test our symmetry-reduced formulation on real and randomly generated quantum circuits defined on the above-mentioned coupling graphs. Our numerical tests confirm that the effort spent in the algebraic analysis pays off, as computation times to solve an instance are several orders of magnitude smaller compared to the nonreduced model. Whereas the model from~\cite{MatsuoYamashita} can only solve instances up to 8 qubits, the largest instances we solve contain up to 100 (resp.~40) qubits and several hundreds of quantum gates on the star (resp.~biclique) coupling graph. Observe that such instances are far out of reach for the nonreduced model, as this would require the use of at least $100! \approx 9.33 \cdot 10^{157}$ constraints and even more variables.

\medskip

This paper is structured as follows. Section~\ref{Sec:NNCP} formally introduces the NNCP and reviews the shortest path formulation of~\cite{MatsuoYamashita}. In Section~\ref{Sec:ExploitSym} we analyse the automorphism group of the graph underlying the formulation, as well as its orbit and orbital structure. These algebraic properties are exploited in Section~\ref{Sec:SymmetryReduction}, where we present our symmetry-reduced NNCP formulation. In Section~\ref{Sec:SpecialCoupling} we apply our approach to several specific types of coupling graphs. Computational results are discussed in Section~\ref{Sec:CompResults}. 

\subsection*{Notation}
A directed graph is given by a pair $(V,A)$, where $V$ is a vertex set and $A \subseteq V \times V$ an arc set. For $i \in V$ and $A' \subseteq A$, we let $\delta^+(i, A')$ (resp.~$\delta^-(i, A')$) denote the set of arcs in $A'$ that leave (resp.~enter) vertex $i$. In case $A' = A$, we just write $\delta^+(i)$ (resp.~$\delta^-(i)$). {\color{black} An undirected graph is given by a pair $(V,E)$, where $V$ is a vertex set and $E \subseteq V^{(2)}$ is an edge set, where $V^{(2)}$ consists of all two-element subsets of $V$.}

The set of integers $\{1, \ldots, k\}$ is denoted by $[k]$. For a subset $S$ of a  finite set $T$, we denote by~$\mathbbm{1}_S \in \{0,1\}^T$ the indicator vector of $S$ in $T$. 

For a group $G$, we denote by $\id_G$ (or simply $\id$) its identity element. 
When $G$ acts on a set $X$, we denote by $\Orb(x) := \{g(x) \, : \, \, g \in G\} \subseteq X$ the orbit of $x \in X$ under the action of $G$. The set of orbits of $X$ under $G$ is denoted by the quotient $X / G$. For any $g \in G$, we let $X^g := \{ x \, : \, \, g(x) = x\}$ be the set of fixed points of $g$.

The symmetric group on a finite set $Y$ is denoted by $\Sym(Y)$. When $Y = [n]$, its symmetric group is shortened to $\mathbb{S}_n$. A permutation $\tau \in \mathbb{S}_n$ can be written in one-line notation, i.e., as an ordered array~$(\tau(1), \tau(2), \ldots, \tau(n))$ of images of $\{1, \ldots, n\}$ under $\tau$. Alternatively, $\tau$ can be written in the usual cycle notation of permutations. Permutations that only consist of a 2-cycle are called transpositions. For $S \subseteq [n]$ and $\tau \in \mathbb{S}_n$, we define the sets $\tau(S) := \{\tau(s) \, : \, \, s \in S\}$ and~$\tau^{-1}(S) := \{\tau^{-1}(s) \, : \, \, s \in S\}$. Moreover, we let $\mathbb{S}_n(S) := \{\tau \in \mathbb{S}_n \, : \, \, \tau(S) = S\}$ denote the setwise stabilizer of $S$ under $\mathbb{S}_n$.

\section{Nearest neighbour compliance problem} \label{Sec:NNCP}
A given quantum circuit can be made feasible with respect to the adjacent interaction constraints by inserting SWAP gates. Although these do not interfere with the functionality of the quantum circuit, the total number of gates is favoured to be as small as possible {\color{black}for several reasons. 
Executing a quantum gate is often computationally expensive and introduces some error due to environmental noise~\cite{ge2024}. Moreover, it is well-known that qubits have a limited coherence time, inducing the need for quantum circuit of small depth.}
The nearest neighbour compliance problem (NNCP) aims at finding an embedding of the qubits over a given architecture such that the number of SWAP gates needed to make the final circuit feasible with respect to the adjacent interaction constraints is minimized.

In this section we formally introduce the nearest neighbour compliance problem as a shortest path problem.  

\subsection{Mathematical formulation of the NNCP}
\label{Subsec:NNCP}
We make two model assumptions about the quantum circuits under consideration. First, quantum gates that act on a single qubit always comply with the adjacent interaction constraints and are therefore not taken into consideration. Second, it only makes sense to talk about adjacency in the context of two-qubit quantum gates. If a quantum gate acts on more than two qubits, we first decompose it into two-qubit gates. This is always possible \cite{NielsenChuang} and there exist a large variety of ways for doing this. Throughout this paper, we assume without loss of generality that quantum circuits consist of a sequence of two-qubit gates.

\medskip 

Let $Q = {\color{black}\{q_1, q_2, \ldots, q_n\}}$ denote the set of qubits of the quantum system. The qubits need to be embedded in a certain topology, that we refer to as the architecture of the quantum system. This architecture is fixed and can be modeled as a graph $(L, E)$. Here $L = [n]$ denotes a set of physical locations and $E \subseteq L^{(2)}$ is the adjacency structure of the architecture. That is, if~${\{i,j\} \in E}$, then locations~$i$ and~$j$ are physically adjacent to each another and can therefore directly share information. The graph is denoted as the {coupling graph} of the quantum system and denoted by $\Coup(E) := (L,E)$. We assume that $(L,E)$ is connected, which implies that all pairs of locations can indirectly share information.

Each qubit in $Q$ needs to be assigned to a physical location in $L$. A bijection $\tau : L \rightarrow Q$ is called a {qubit order}. To present a qubit order, we use one-line notation with respect to the images in $Q$. For example, the order
\begin{align*} 
\tau = (\tau(1), \tau(2), \tau(3), \tau(4)) = ({\color{black}q_2, q_3, q_1, q_4})
\end{align*} 
corresponds to the assignment where qubit ${\color{black}q_2}$ is on location 1, qubit ${\color{black}q_3}$ on location~2, qubit~${\color{black}q_1}$ on location~3 and qubit~${\color{black}q_4}$ on location~4. The set of all qubit orders on $n$ qubits is equal to~$\mathbb{S}_n$. 

A SWAP gate interchanges the qubits on two locations in the embedding. It can also be modeled as an element $\sigma \in \mathbb{S}_n$, where $\sigma$ is a {transposition}. Using cycle notation, the SWAP gate $\sigma = (i~j)$ \textcolor{black}{applied to} the qubit order $\tau$ interchanges the qubits $\tau(i)$ and $\tau(j)$. Applying this SWAP gate can be seen as a right action of $\sigma$ on $\mathbb{S}_n$, i.e.,
\begin{align*}
\tau \circ \sigma & = (\tau(1), \tau(2), \ldots, \tau(i), \ldots, \tau(j), \ldots , \tau(n))\circ(i~j) \\
& = (\tau(1), \tau(2), \ldots, \tau(j), \ldots, \tau(i), \ldots, \tau(n)),
\end{align*}
for all $\tau \in \mathbb{S}_n$. To simplify notation, we omit the $\circ$ in group actions and just write $\tau \sigma$ in the sequel.
\begin{remark}
Although both elements of $\mathbb{S}_n$, $\tau$ represents a qubit order, while $\sigma$ represents a SWAP gate. To discriminate between these objects, we always use one-line notation for qubit orders and cycle notation for SWAP gates throughout the paper. 
\end{remark}

A SWAP gate can only be applied to qubits on locations that are adjacent in $\Coup(E)$. Whenever there is an edge $\{i,j\} \in E$, the SWAP gate $(i~j)$ acts on adjacent locations. Let
\begin{align} \label{Def:T}
T := \left\{ (i~j) \in \mathbb{S}_n \, : \, \, \{i,j\} \in E \right\}
\end{align}
denote the set of transpositions that correspond to a \textcolor{black}{feasible} SWAP gate in the quantum system. 

Given two qubit orders $\tau_1, \tau_2 \in \mathbb{S}_n$, we are interested in the minimum number of SWAP gates that need to be applied to $\tau_1$ to obtain $\tau_2$ by only using SWAP gates from $T$. Let~${J_T: \mathbb{S}_n \times \mathbb{S}_n \to \mathbb{Z}_+}$ be defined as
\begin{align} \label{def:metricJ}
J_T(\tau_1, \tau_2) := \min \{ k \, : \, \, \tau_2 = \tau_1 \sigma_1  \sigma_2  \dots  \sigma_k, \, \sigma_1, \ldots, \sigma_k \in T\}, 
\end{align}
which forms a metric on all qubit orders and depends on the quantum architecture $T$. 
Observe that this metric is left-invariant, i.e., $J_T (\tau_1, \tau_2) = J_T (\pi \tau_1, \pi \tau_2)$ for all $\pi \in \mathbb{S}_n$, implying that $J_T(\tau_1, \tau_2)$  equals the length of the shortest sequence of transpositions of $T$ needed to generate $\tau_2^{-1}\tau_1$. It is known that finding such minimum-length sequence is in general $\mathit{PSPACE}$-complete~\cite{Jerrum}. For special types of coupling graphs, however, the metric $J_T$ is computationally tractable, e.g., when $\Coup(E)$ is a path or the complete graph. For these cases, $J_T$ coincides with the Kendall tau distance and the Cayley distance, respectively.

Let ${\color{black} q_1, q_2} \in Q$ be two qubits such that {\color{black}$q_1 \neq q_2$}. Then the unordered pair {\color{black} $g_{q_1q_2} =\{q_1,q_2\}$} is a two-qubit quantum gate that acts on qubits {\color{black}$q_1$ and $q_2$}. Whenever the specific qubits on which the gate acts are irrelevant, we sometimes omit the subscripts. A finite sequence~${C = (g^1, \ldots, g^m)}$ of gates~${g^1, \ldots, g^m}$ is called a gate sequence of size $m$. 
Given a set of qubits $Q$ and a gate sequence $C$, the tuple~${\Gamma = (Q,C)}$ is called a {quantum circuit}. 

We say that a qubit order $\tau$ complies with a gate {\color{black}$g_{q_1q_2}$} if qubits {\color{black}$q_1$ and $q_2$} are adjacent in $\tau$ with respect to the coupling graph $\Coup(E)$, i.e., if $\tau^{-1}({\color{black}g_{q_1q_2}}) = \{\tau^{-1}({\color{black}q_1}), \tau^{-1}({\color{black}q_2})\} \in E$. We now formulate the NNCP.
\begin{definition}[NNCP] \label{Def:NNC}
Let $\Gamma = (Q,C)$ be a quantum circuit with $n$ qubits and $m$ gates, and let $\Coup(E) = (L,E)$ be the coupling graph of the underlying architecture. Then, the nearest neighbour compliance problem asks for a sequence of qubit orders $\tau^k$, $k \in [m]$, each one corresponding to an order prior to applying a gate of $C$, such that $\sum_{k = 1}^{m-1}J_T(\tau^k, \tau^{k+1})$ is minimized and such that $\tau^k$ complies with $g^k$ for all $k \in [m]$.
\end{definition}
The NNCP as presented in Definition~\ref{Def:NNC} is known to be $\mathcal{NP}$-hard in general \cite{SiraichiEtAl}. 

We end this section by introducing the notion of the so-called gate graph, which captures the underlying qubit dependencies imposed by the gates in the circuit.
\begin{definition} \label{Def:GateGraph}
Let $\Gamma = (Q,C)$ be a quantum circuit. The gate graph $(Q,U)$ is an undirected graph that has vertex set $Q$ and edge set $U = \{g \, : \, \, g \in C\}$. 
\end{definition}
The gate graph $(Q,U)$ will be exploited in Section~\ref{Subsec:automorphismX}. 

\subsection{The NNCP as a shortest path problem}
\label{Subsec:ShortestPath}
In this section we show how the NNCP can be modeled as a shortest path problem in a directed graph following the construction of \cite{MatsuoYamashita, Mulderij}.

Let an instance of the NNCP as defined in Section~\ref{Subsec:NNCP} be given. Of key importance in the reduction to a shortest problem is the notion of a Cayley graph.

\begin{definition}[Cayley graph] \label{Def:Cayley}
Let $G$ be a finite group and let $S$ be a subset of $G$ such that~${\id_G \notin S}$ and~$S = S^{-1} := \{s^{-1} \, : \, \,s \in S\}$. The Cayley graph $\Cay(G,S)$ on $G$ with respect to $S$ is defined as the (directed) graph with vertex set $G$ and arc set $\{(g, gs) \, : \, \, g \in G, s \in S\}$. 
\end{definition}
Observe that $\Cay(G,S)$ as in Definition~\ref{Def:Cayley} contains an arc if and only if it also contains the reversed arc. Although this suggests that any $\Cay(G,S)$ is undirected, we stick to the setting of two reversed directed arcs, since we will employ the Cayley graphs as subgraphs of a larger directed graph.

Let $H := \Cay(\mathbb{S}_n, T)$, where $T$ is given by~\eqref{Def:T}. More precisely, the vertex and arc set of $H$ are given by $V(H) := \mathbb{S}_n$ and $A(H) := \left\{(\tau, \tau  \sigma) \, : \, \, \tau \in \mathbb{S}_n, \sigma \in T\right\}$,
respectively. Each vertex in $V(H)$ represents a qubit order, while an arc in $A(H)$ represents a SWAP gate that translates a qubit order into another qubit order with respect to the coupling graph. Now, we define the subgraphs $H^k$ for~$k \in [m]$ as disjoint copies of $H$, one for each gate in the circuit. 

The $m$ subgraphs $H^k$ are merged to obtain a graph $X = (V,A)$.
The vertex set~$V$ of~$X$ consists of the union of all $V^k$, $k \in [m]$, as well as a source $s$ and sink $t$, i.e., $V = \{s\} \cup V^1 \cup \dots \cup V^m \cup \{t\}$. Since the subgraphs $H^1, \ldots, H^m$ are identical, we use superscripts to indicate to which subgraph a vertex belongs. For example, $\tau^k$ and $\tau^{k+1}$ correspond to the same qubit order in subgraph $k$ and~$k + 1$, respectively.  

The arc set $A$ of $X$ contains the union of all $A^k$, $k \in [m]$. Moreover, the arcs between different subgraphs are introduced by the following sets: 
\begin{align}
\begin{aligned} D^0 & := \{(s,\tau^1) \, : \, \, \tau^1 \in V^1\} \\
D^k & := \{(\tau^k, \tau^{k+1}) \, : \, \, \tau^k \in V^k, \tau^{k+1} \in V^{k+1},~ (\tau^k)^{-1}(g^k) \in E\}, \quad k \in [m-1] \\
D^m & := \{(\tau^m, t) \, : \, \, \tau^m \in V^m,~(\tau^m)^{-1}(g^m) \in E\}. \end{aligned} \label{Def:Dk}
\end{align}
These sets can be interpreted as follows. The set $D^0$ contains an arc from $s$ to all nodes in~$H^1$. For all $k \in [m-1]$, $D^k$ contains the connecting arcs from $H^k$ to $H^{k+1}$. Suppose the gate~$g^k$ acts on qubits {\color{black}$q_1$ and $q_2$}. Then we include an arc from a qubit order~$\tau^k$ in $H^k$ to the same qubit order $\tau^{k+1}$ in $H^{k+1}$ if and only if {\color{black}$q_1$ and $q_2$} are adjacent in $\tau^k$ with respect to $\Coup(E)$. That is, whenever~$(\tau^k)^{-1}(g^k) = \{(\tau^k)^{-1}({\color{black}q_1}),~ (\tau^k)^{-1}({\color{black}q_2})\} \in E$. Similarly, $D^m$ contains all arcs from $\tau^m$ with this property to the sink node $t$. Now, the arc set $A$ of $X$ is given by
\begin{align*}
A = A^1 \cup \cdots \cup A^m \cup D^0 \cup D^1 \cup \cdots \cup D^m. 
\end{align*}
We set the cost of each arc in $A^k, k \in [m]$, equal to one, as traversing these arcs corresponds to applying one SWAP gate. The cost of the arcs in $D^k, k = 0, \ldots, m$, is equal to zero, as no SWAP gates are applied when moving from a subgraph to the next.

This construction implies the following result.

\begin{theorem}[\cite{Mulderij}]
Any $(s,t)$-path in $X$ {\color{black}induces} a sequence $(\tau^1, \ldots, \tau^m)$ of qubit orders that all comply with the adjacent interaction constraints. A shortest $(s,t)$-path in $X$ corresponds to an optimal solution of the NNCP.
\end{theorem}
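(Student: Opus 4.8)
The plan is to establish a two-way correspondence between $(s,t)$-paths in $X$ and sequences of compliant qubit orders, and to show that the cost of a path matches the NNCP objective of the induced sequence. First I would analyse the coarse structure of an arbitrary $(s,t)$-path $P$. The only arcs leaving $s$ lie in $D^0$ and enter layer $V^1$; within a layer $V^k$ the only outgoing arcs are the SWAP arcs of $A^k$ (which stay in $V^k$) and the forward arcs of $D^k$ (which enter $V^{k+1}$, or reach $t$ when $k=m$). Since no arc ever leads from a later layer back to an earlier one, and $t$ is reachable only through $D^m$, every $(s,t)$-path must visit the layers $V^1, \ldots, V^m$ in this exact order, traversing $D^k$ exactly once for each $k$. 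Consequently $P$ is determined by its entry vertex into $V^1$, a block of SWAP arcs inside each layer, and the vertex from which $D^k$ is traversed. I would define $\tau^k$ to be this last vertex, i.e.\ the qubit order from which $P$ leaves layer $k$. By the definition of the sets $D^k$ in~\eqref{Def:Dk}, traversing the arc out of layer $k$ requires $(\tau^k)^{-1}(g^k) \in E$, which is precisely the condition that $\tau^k$ complies with $g^k$. This proves the first assertion.

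Next I would compute the cost of $P$. Only the SWAP arcs in $A^1 \cup \cdots \cup A^m$ carry cost one; the inter-layer arcs in the sets $D^k$ are free. For $k \in [m-1]$ the forward arcs of $D^k$ preserve the qubit order, connecting $\tau^k \in V^k$ to the identical order $\tau^{k+1} \in V^{k+1}$, so the entry vertex of layer $k+1$ equals $\tau^k$. Inside layer $k+1$ the portion of $P$ is a directed path of $H = \Cay(\mathbb{S}_n, T)$ from $\tau^k$ to $\tau^{k+1}$. A directed path of length $\ell$ in this Cayley graph corresponds exactly to an expression $\tau^{k+1} = \tau^k \sigma_1 \cdots \sigma_\ell$ with $\sigma_1, \ldots, \sigma_\ell \in T$, so the number of SWAP arcs traversed inside layer $k+1$ is at least $J_T(\tau^k, \tau^{k+1})$ by the definition~\eqref{def:metricJ} of $J_T$; the number of SWAP arcs inside layer $1$ is at least $0$. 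Summing over all layers gives
\begin{align*}
\mathrm{cost}(P) \; \geq \; \sum_{k=1}^{m-1} J_T(\tau^k, \tau^{k+1}),
\end{align*}
which is exactly the NNCP objective of the induced sequence $(\tau^1, \ldots, \tau^m)$.

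Conversely, I would show this bound is attainable: given any sequence $(\tau^1, \ldots, \tau^m)$ of qubit orders with $\tau^k$ compliant with $g^k$ for all $k$, one builds an $(s,t)$-path that enters $V^1$ directly at $\tau^1$ (using an arc of $D^0$ and no SWAP arcs in layer $1$), realizes each transition $\tau^k \to \tau^{k+1}$ by a geodesic of $H$ of length $J_T(\tau^k, \tau^{k+1})$ inside layer $k+1$, and leaves each layer via $D^k$ (and finally $D^m$), which is permitted since $\tau^k$ complies with $g^k$. This path has cost exactly $\sum_{k=1}^{m-1} J_T(\tau^k, \tau^{k+1})$. Hence the minimum cost of an $(s,t)$-path in $X$ equals the minimum of the NNCP objective over all compliant sequences, and a shortest $(s,t)$-path yields an optimal NNCP solution.

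The hard part will be mostly bookkeeping rather than conceptual. The one delicate point is the treatment of the first layer: since the initial allocation is a free, cost-free decision in Definition~\ref{Def:NNC}, a shortest path must enter $V^1$ at $\tau^1$ itself and spend no SWAP arcs there, so that the layer-$1$ contribution vanishes and the total cost matches the objective exactly. I expect verifying this boundary behaviour, together with the identification of the directed distance in $\Cay(\mathbb{S}_n, T)$ with the metric $J_T$, to be the only steps requiring care.
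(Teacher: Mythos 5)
Your argument is correct. The paper itself states this theorem without proof, citing \cite{Mulderij}, but the construction in Section~\ref{Subsec:ShortestPath} is exactly set up so that the argument you give goes through: the layered, forward-only arc structure forces every $(s,t)$-path to cross each $D^k$ exactly once, the tail of that crossing is a compliant order $\tau^k$ by~\eqref{Def:Dk}, the directed distance in $\Cay(\mathbb{S}_n,T)$ coincides with $J_T$ by~\eqref{def:metricJ}, and your two-sided bound (lower bound for arbitrary paths, attainment via geodesics with a free entry into $V^1$ at $\tau^1$) establishes the equality of optima. Your handling of the first layer is the right way to deal with the cost-free initial allocation.
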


There are many algorithms in the literature for solving the shortest path instance, e.g., Dijkstra's algorithm with Fibonacci heaps \cite{FredmanTarjan}. Alternatively, we can solve it as a linear programming (LP) problem. For all $k \in [m]$ and $e \in A^k$, let $x_{e}$ denote a variable that is one if arc $e$ is used on a path, and zero otherwise. Similarly, for all $k \in \{0\} \cup [m]$ and $e \in D^k$, let $y_e$ denote a variable that is one if arc $e$ is used on a path, and zero otherwise. Then the shortest $(s,t)$-path in $X$ can be found by solving the following LP:
\leqnomode
\begin{align} \tag{SPP} \label{Prob:SPP}
\hspace{0.5cm} \begin{aligned}
\quad~~\min \, & \sum_{k =1}^m \sum_{e \in A^k}x_e \\
\text{s.t.}\, & \sum_{e \in D^0}y_e = 1, \quad \sum_{e \in D^m} y_e = 1 \\
& \sum_{e \in \delta^-(\tau,D^{k-1})} \hspace{-0.35cm} y_e + \sum_{e \in \delta^-(\tau, A^k)}\hspace{-0.35cm}x_e = \sum_{e \in \delta^+(\tau,D^k)}\hspace{-0.35cm}y_e + \sum_{e \in \delta^+(\tau, A^k)}\hspace{-0.35cm}x_e ~~\forall \tau \in V^k, k\in [m]\\
& 0 \leq x_e \leq 1 \quad \forall e \in A^k,~k \in [m], \\
 & 0 \leq y_e \leq 1 \quad \forall e \in D^k,~k \in \{0\} \cup [m].  
\end{aligned} 
\end{align}

\reqnomode

\section[Symmetries in $X = (V,A)$]{Symmetries in \boldmath $X = (V,A)$} \label{Sec:ExploitSym}
The graph $X$ constructed in Section~\ref{Subsec:ShortestPath} contains $\Theta(m n!)$ vertices and $\Theta(|E| mn!)$ arcs. The bottleneck in solving the NNCP to optimality is clearly the factorial scaling in the number of qubits. Fortunately, for many structured quantum system architectures, the problem can be reduced by exploiting the symmetries in $X$. In this section we study these symmetries in terms of the automorphism group of $X$.

In Section~\ref{Subsec:automorphismCay} and~\ref{Subsec:automorphismX} we study the automorphism group of Cayley graphs generated by transpositions and the automorphism group of $X$, respectively. In Section~\ref{Subsec:Orbit} we study the orbit and orbital structure induced by this group action on $X$. The results in this section are the key ingredients of the symmetry reduction explained in Section~\ref{Sec:SymmetryReduction}.

\subsection[Automorphism group of $\Aut(\Cay(\mathbb{S}_n, T))$]{Automorphism group of \boldmath $\Autbold(\Caybold(\mathbb{S}_n, T))$}
\label{Subsec:automorphismCay}
For a directed graph $X$ with vertex set $V$ and arc set $A$, a permutation $\rho \in \Sym(V)$ is called an automorphism of $X$ if $(\rho(i), \rho(j)) \in A$ if and only if $(i,j) \in A$. We also say that such $\rho$ acts on $X$. The automorphism group of $X$ is the group of all automorphisms of $X$ and is denoted by $\Aut(X)$.

In order to determine the automorphism group of the graph $X$ introduced in Section~\ref{Subsec:ShortestPath}, we start by considering the automorphism group of the subgraphs $H^k$, $k \in [m]$. Recall that all $H^k$ are identical and equal to $\Cay(\mathbb{S}_n, T)$, where $T$ is a set of transpositions, see~\eqref{Def:T}. Hence, the goal of this subsection is to study $\Aut(\Cay(\mathbb{S}_n, T))$. 

\medskip

There exist several works in the literature on the automorphism group of Cayley graphs generated by transpositions. As indicated by Feng~\cite{Feng}, we can show that $\mathbb{S}_n$ acts on $\Cay(\mathbb{S}_n, T)$~by left multiplication. That is, for any $a \in \mathbb{S}_n$ the mapping $\tau \mapsto a \tau$ defines an automorphism of $\Cay(\mathbb{S}_n, T)$. All such automorphisms form a subgroup of $\Aut(\Cay(\mathbb{S}_n, T))$. We can also show that the group $\Aut(\Coup(E))$ acts on $\Cay(\mathbb{S}_n, T)$ by right multiplication via the mapping $\tau \mapsto \tau b^{-1}$, which is an automorphism of $\Cay(\mathbb{S}_n, T)$ for all $b \in \Aut(\Coup(E))$. To verify this, let $(\tau_1, \tau_2)$ be an arc in $\Cay(\mathbb{S}_n, T)$. Then $\tau_2 = \tau_1 \sigma_1$ for some $\sigma_1 \in T$. The image of this arc under the action of an element~$b \in \Aut(\Coup(E))$ is
\begin{align*}
    (\tau_1 b^{-1},\, \tau_2 b^{-1}) = (\tau_1 b^{-1}, \, \tau_1 \sigma_1 b^{-1}) = (\tau_1 b^{-1}, \, \tau_1 b^{-1} b\sigma_1 b^{-1}).
\end{align*}
It is well-known that if a permutation maps $i$ to $j$, then the conjugate of this permutation by~$b$ maps~$b(i)$ to~$b(j)$. Therefore, if $\sigma_1 = (i~j)$, then $\sigma_2 := b \sigma_1 b^{-1} = (b(i)~b(j))$. Since $b$ is an automorphism of $\Coup(E)$, $\sigma_2 \in T$, which implies that $(\tau_1 b^{-1}, \, \tau_2 b^{-1})$ is again an arc of $\Cay(\mathbb{S}_n, T)$. Since~$\tau \mapsto \tau b^{-1}$ is bijective, it follows that $\Aut(\Coup(E))$ indeed acts on $\Cay(\mathbb{S}_n, T)$ by right multiplication.  

We now show how both group actions are combined in order to obtain a subgroup of $\Aut(\Cay(\mathbb{S}_n, T))$. Let us define the mapping $\theta: \mathbb{S}_n \times \Aut(\Coup(E)) \rightarrow \Aut(\Cay(\mathbb{S}_n,T))$ given by
\begin{align} \label{Eq:theta}
      \theta(a,b) := (\tau \mapsto a \tau b^{-1}). 
\end{align}
Indeed, $\theta(a,b)$ is the composition of an action by left multiplication by an element $a \in \mathbb{S}_n$ and a right multiplication by an element $b \in \Aut(\Coup(E))$ (in arbitrary order). So, for all~$(a,b)$ in its domain, $\theta(a,b)$ is indeed an automorphism of $\Cay(\mathbb{S}_n,T)$. We can show that the map $\theta$ is a group homomorphism that is injective.

\begin{theorem} \label{Thm:DirectSubgroup}
For $n \geq 3$, the mapping $\theta$ is a group homomorphism from $\mathbb{S}_n \times \Aut(\Coup(E))$ to $\Aut(\Cay(\mathbb{S}_n, T))$ that is injective. 
\end{theorem}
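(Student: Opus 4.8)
The plan is to verify the two claimed properties—being a homomorphism and being injective—separately, both by direct computation, working throughout with the natural inclusion $\Aut(\Coup(E)) \leq \Sym(L) = \mathbb{S}_n$ so that an element $b \in \Aut(\Coup(E))$ may be treated as an ordinary permutation when convenient. I fix the convention that the group operation on the codomain $\Aut(\Cay(\mathbb{S}_n,T))$ is composition of maps, written $\circ$, and that the operation on the domain $\mathbb{S}_n \times \Aut(\Coup(E))$ is componentwise. The preceding discussion already establishes that $\theta(a,b)$ genuinely lands in $\Aut(\Cay(\mathbb{S}_n,T))$, so nothing further is needed on that point.

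For the homomorphism property I would simply compose two maps and compare. For arbitrary $(a_1,b_1),(a_2,b_2)$ in the domain and any $\tau \in \mathbb{S}_n$,
\begin{align*}
\big(\theta(a_1,b_1) \circ \theta(a_2,b_2)\big)(\tau) = a_1\big(a_2 \tau b_2^{-1}\big) b_1^{-1} = (a_1 a_2)\,\tau\,(b_1 b_2)^{-1},
\end{align*}
using $b_2^{-1} b_1^{-1} = (b_1 b_2)^{-1}$. Since the right-hand side is exactly $\theta(a_1 a_2, b_1 b_2)(\tau)$ and $(a_1 a_2, b_1 b_2) = (a_1,b_1)(a_2,b_2)$ is the product in the direct product group, this shows $\theta(a_1,b_1)\,\theta(a_2,b_2) = \theta\big((a_1,b_1)(a_2,b_2)\big)$, i.e.\ $\theta$ is a homomorphism. (One should check at the same time that $\theta(\id,\id) = \id$, which is immediate.)

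For injectivity I would compute $\Ker(\theta)$. Suppose $\theta(a,b) = \id$, that is, $a \tau b^{-1} = \tau$ for every $\tau \in \mathbb{S}_n$. Evaluating this identity at $\tau = \id$ gives $a b^{-1} = \id$, hence $a = b$. Substituting back, the condition becomes $a \tau a^{-1} = \tau$ for all $\tau$, so $a$ lies in the center of $\mathbb{S}_n$. The crucial input is that this center is trivial for $n \geq 3$, forcing $a = \id$ and therefore $b = \id$. Thus $\Ker(\theta) = \{(\id,\id)\}$ and $\theta$ is injective.

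The proof is essentially bookkeeping, so there is no deep obstacle; the two points demanding care are getting the composition order right (so that the inverses on the $b$-side combine correctly into $(b_1b_2)^{-1}$) and invoking the hypothesis $n \geq 3$ at exactly the right place—namely the triviality of the center of $\mathbb{S}_n$, which fails for $n = 2$ and is what makes the bound on $n$ necessary.
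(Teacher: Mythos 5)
Your proof is correct and follows essentially the same route as the paper: the homomorphism property is verified by the identical direct computation, and injectivity rests in both cases on the triviality of $Z(\mathbb{S}_n)$ for $n \geq 3$, with your kernel computation being just the standard reformulation of the paper's comparison of two arbitrary elements $\theta(a_1,b_1) = \theta(a_2,b_2)$. No gaps.
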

\begin{proof}
We start by showing that $\theta$ is indeed a group homomorphism.
Let $(a_1, b_1), (a_2, b_2) \in \mathbb{S}_n \times \Aut(\Coup(E))$. Then, for all $\tau \in \mathbb{S}_n$: 
\begin{align*}
\theta\left((a_1, b_1)(a_2, b_2) \right)(\tau) & = \theta \left( (a_1a_2, b_1b_2) \right)(\tau) =  a_1 a_2 \tau (b_1 b_2)^{-1}  =  a_1 a_2 \tau b_2^{-1} b_1^{-1}  \\
\theta((a_1, b_1)) \theta((a_2, b_2))(\tau) & =  \theta(a_1, b_1)(a_2 \tau b_2^{-1}) =  a_1a_2 \tau b_2^{-1}b_1^{-1}.
\end{align*}
Hence, $\theta$ is a group homomorphism. To prove injectivity, assume that $(a_1, b_1), (a_2, b_2) \in \mathbb{S}_n \times \Aut(\Coup(E))$ are such that $\theta((a_1, b_1)) = \theta((a_2, b_2))$. Then, $a_1 \tau b_1^{-1} = a_2 \tau b_2^{-1}$ for all~${\tau \in \mathbb{S}_n}$. In particular, this must hold for $\tau = \id$, from which it follows that $a_1 b_1^{-1} = a_2b_2^{-1}$, and hence,~${a_2 = a_1 b_1^{-1}b_2}$. Substituting this into~${a_1 \tau b_1^{-1} = a_2 \tau b_2^{-1}}$, yields
\begin{align*}
a_1 \tau b_1^{-1} = a_1 b_1^{-1} b_2 \tau b_2^{-1}~~\forall \tau \in \mathbb{S}_n, \quad  \text{ or equivalently,} \quad   \tau b_1^{-1} b_2 = b_1^{-1}b_2 \tau~~\forall \tau \in \mathbb{S}_n. 
\end{align*}
This implies that $b_1^{-1}b_2 \in Z(\mathbb{S}_n) := \{ g \in \mathbb{S}_n \, : \, \, gh = hg~~\forall h \in \mathbb{S}_n \}$. It is well-known that the center~$Z(\mathbb{S}_n)$ is trivial for $n \geq 3$, hence $b_1 = b_2$. From this, it simply follows that also $a_1 = a_2$, hence~$\theta$ is injective.
\end{proof}

Theorem~\ref{Thm:DirectSubgroup} shows that the image of $\mathbb{S}_n \times \Aut(\Coup(E))$ under $\theta$ is a subgroup of $\Aut(\Cay(\mathbb{S}_n, T))$, which is isomorphic to $\mathbb{S}_n \times \Aut(\Coup(E))$ by the injectivity of $\theta$.

\medskip

The group $\mathbb{S}_n \times \Aut(\Coup(E))$ is the full automorphism group if the mapping $\theta$ is a bijection. A sufficient condition for this to be true is the normality of $\Cay(\mathbb{S}_n, T)$. We call the Cayley graph $\Cay(\mathbb{S}_n, T)$ {normal} if the subgroup of all automorphisms by left multiplication by elements of $\mathbb{S}_n$, i.e., $\{(\tau \mapsto a \tau ) \, : \, \, a \in \mathbb{S}_n \}$, is a normal subgroup of $\Aut(\Cay(\mathbb{S}_n, T))$. It has been shown by Ganesan~\cite{Ganesan2016} that $\Cay(\mathbb{S}_n, T)$ is normal if and only if $\Aut(\Cay(\mathbb{S}_n, T)) \cong \mathbb{S}_n \times \Aut(\Coup(E))$. Moreover, several sufficient conditions for normality have been derived in the literature~\cite{Ganesan2013, Ganesan2015}. It was recently shown by Gijswijt and De Meijer~\cite{GijswijtDeMeijer} that almost all graphs of the form $\Cay(\mathbb{S}_n,T)$ are normal, which immediately implies the following result.

\begin{theorem}[\cite{GijswijtDeMeijer}] \label{Thm:AllNormal}
    Let $\Coup(E)$ be connected and not equal to the 4-cycle $C_4$ or the complete graph $K_n$. Then, $\Aut(\Cay(\mathbb{S}_n,T)) \cong \mathbb{S}_n \times \Aut(\Coup(E))$.
\end{theorem}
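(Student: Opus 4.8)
The plan is to reduce the statement to the \emph{normality} of $\Cay(\mathbb{S}_n,T)$ and then to establish normality through a local-to-global rigidity argument. By the characterization of Ganesan~\cite{Ganesan2016} quoted above, the claimed isomorphism is equivalent to $\Cay(\mathbb{S}_n,T)$ being normal, and by standard Cayley-graph theory normality is in turn equivalent to the statement that the stabilizer $A_{\id}$ of the identity vertex in $A := \Aut(\Cay(\mathbb{S}_n,T))$ consists \emph{only} of automorphisms of the group $\mathbb{S}_n$ that preserve $T$ setwise, i.e. $A_{\id} = \Aut(\mathbb{S}_n,T) := \{ \alpha \in \Aut(\mathbb{S}_n) : \alpha(T) = T\}$. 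Theorem~\ref{Thm:DirectSubgroup} already supplies the inclusion $\Aut(\mathbb{S}_n,T) \subseteq A_{\id}$, since each conjugation $\tau \mapsto b \tau b^{-1}$ with $b \in \Aut(\Coup(E))$ fixes $\id$. Hence everything reduces to the reverse inclusion: every graph automorphism of $\Cay(\mathbb{S}_n,T)$ fixing $\id$ is a $T$-preserving automorphism of $\mathbb{S}_n$.

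So fix $\phi \in A_{\id}$. As the neighbours of $\id$ are precisely the generators, $\phi$ restricts to a permutation of $T$, and my first goal is to show that $\phi|_T$ is induced by some $b \in \Aut(\Coup(E))$. The tool here, going back to Feng~\cite{Feng}, is that the short cycles through $\id$ encode the adjacency structure of $\Coup(E)$: since a product of two distinct transpositions is never a transposition, $\Cay(\mathbb{S}_n,T)$ is triangle-free, and a careful (but elementary) inspection of the $4$- and $6$-cycles through $\id$ recovers, for each pair $s,t \in T$, whether the corresponding edges of $\Coup(E)$ are disjoint or share an endpoint. Triangles of $\Coup(E)$ require separate treatment, as they produce $4$-cycles also for edge-sharing transpositions and therefore defeat the naive ``commuting iff $4$-cycle'' criterion. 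The net effect is that $\phi|_T$ is an automorphism of the line graph $\mathcal{L}(\Coup(E))$; by Sabidussi's theorem the natural homomorphism $\Aut(\Coup(E)) \to \Aut(\mathcal{L}(\Coup(E)))$ is an isomorphism for every connected graph except $K_4$, which is itself excluded ($K_4 = K_n$ for $n=4$), so $\phi|_T = (s \mapsto b s b^{-1})|_T$ for some $b \in \Aut(\Coup(E))$. Composing $\phi$ with the inverse of $\tau \mapsto b \tau b^{-1}$, which again lies in $A_{\id}$, we may henceforth assume that $\phi$ fixes every element of $T$ pointwise.

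It remains to prove the rigidity statement that an automorphism $\phi \in A_{\id}$ fixing $\id$ together with every $s \in T$ must be the identity, and this is where I expect the main difficulty --- and the two exceptions --- to reside. I would argue by induction on the distance from $\id$, showing that a vertex at distance $d$ is pinned down by its neighbours at distance $d-1$ together with the generators labelling the incident arcs, using that $T$ generates $\mathbb{S}_n$ and that the local $4$- and $6$-cycle pattern reconstructs each vertex uniquely. Precisely this reconstruction fails in the two excluded cases, which explains their appearance in the statement: when $\Coup(E) = K_n$ the inversion map $\tau \mapsto \tau^{-1}$ is a \emph{nontrivial} element of $A_{\id}$ fixing $T$ pointwise (every conjugate of a transposition is again a transposition, so inversion preserves the arc set, yet it is an anti-automorphism rather than an automorphism of $\mathbb{S}_n$), and when $\Coup(E) = C_4$ the exceptional structure of $\mathbb{S}_4$ --- in particular its normal Klein four-subgroup --- likewise yields a nontrivial automorphism fixing $\{\id\} \cup T$. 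The content of the theorem, following Gijswijt and De Meijer~\cite{GijswijtDeMeijer}, is that for every \emph{other} connected coupling graph this local reconstruction is rigid, forcing $\phi = \id$, hence $A_{\id} = \Aut(\mathbb{S}_n,T)$ and normality.
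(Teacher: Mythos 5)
First, a point of comparison: the paper does not prove this statement at all. Theorem~\ref{Thm:AllNormal} is imported verbatim from Gijswijt and De Meijer~\cite{GijswijtDeMeijer}, so there is no in-paper argument to measure your proposal against; the paper's ``proof'' is the citation. Your reduction chain is nonetheless the standard and correct one: by Ganesan's characterization the claim is equivalent to normality of $\Cay(\mathbb{S}_n,T)$, normality is equivalent to the identity-vertex stabilizer $A_{\id}$ being exactly $\Aut(\mathbb{S}_n,T)$, and the latter coincides here with the conjugations $\tau \mapsto b\tau b^{-1}$, $b \in \Aut(\Coup(E))$. Your explanation of why $K_n$ must be excluded (inversion fixes $\id$ and every transposition yet is not of the form $\tau \mapsto a\tau b^{-1}$) is also correct.

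The proposal nevertheless has a genuine gap, located exactly where you admit the difficulty lies. The entire content of the cited theorem is the rigidity claim that an automorphism fixing $\id$ and every generator pointwise is the identity; your text announces an ``induction on the distance from $\id$'' using ``the local $4$- and $6$-cycle pattern'' but never carries it out, and gives no reason why the reconstruction succeeds for \emph{every} connected $\Coup(E)$ other than $C_4$ and $K_n$ --- in particular for coupling graphs containing triangles (which you yourself flag as defeating the naive $4$-cycle criterion) or for sparse graphs where the $6$-cycle structure is thin. A second, concrete defect sits in the line-graph step: the natural map $\Aut(G)\to\Aut(L(G))$ for connected $G$ fails to be surjective not only for $K_4$ but also for $K_4$ minus an edge, whose line graph is the wheel $W_4$ with automorphism group of order $8$ while $|\Aut(K_4-e)|=4$. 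Since $K_4-e$ is a connected coupling graph that is neither $C_4$ nor $K_n$, the theorem asserts normality there, yet your appeal to ``Sabidussi's theorem \ldots\ for every connected graph except $K_4$'' does not show that $\phi|_T$ is induced by an element of $\Aut(\Coup(E))$ in that case; the extra line-graph automorphisms would have to be excluded by a separate argument. As written, the proposal is a plausible proof \emph{plan}, consistent with how such normality results are established in the literature, but it is not a proof.
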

In case $\Coup(E)$ is $C_4$ or $K_n$, the automorphism group of $\Cay(\mathbb{S}_n, T)$ is also known, see~\cite[Section 3]{Ganesan2013} and \mbox{\cite[Theorem 1.1]{Ganesan2015}}, respectively. 

In the settings that are interesting for our application, $\Coup(E)$ is not equal to $C_4$ or $K_n$ (indeed, this would imply a very small number of qubits or a trivial NNCP instance, respectively). Therefore, Theorem~\ref{Thm:AllNormal} applies in the sequel.

\subsection[Automorphism group of $X$]{Automorphism group of \boldmath $X$}
\label{Subsec:automorphismX}

Now that we established the full automorphism group of $\Cay(\mathbb{S}_n, T)$, we focus on the automorphism group of the entire graph $X$. Indeed, we need to take the arc structure in-between the subgraphs $H^k$ into account. We start by showing how these arcs restrict the automorphism group of a single subgraph, after which we combine these results to obtain $\Aut(X)$. 

\medskip

Each $H^k$ corresponds to a gate $g^k$ acting on two qubits in $Q$. The set of outgoing arcs $D^k$ consists of arcs leaving qubit orders $\tau$ where $\tau^{-1}(g^k) \in E$, see~\eqref{Def:Dk}. Since this arc structure needs to be preserved, the automorphisms of interest must setwise fix the qubit orders with this property. For all $k \in [m]$, let
\begin{align} \label{Def:Fk}
F^k := \{ \tau \in \mathbb{S}_n \, : \, \, \tau^{-1}(g^k) \in E \}. 
\end{align}
Instead of the automorphism group of $\Cay(\mathbb{S}_n, T)$, we are only interested in its subgroup that setwise fixes $F^k$. That is,
\begin{align*}
\Aut(\Cay(\mathbb{S}_n, T), F^k) := \left\{ \rho \in \Aut(\Cay(\mathbb{S}_n, T)) \, : \, \, \rho(F^k) = F^k \right\}. 
\end{align*}
For each $S \subseteq [n]$, let $\mathbb{S}_n(S) = \{ \tau \in \mathbb{S}_n \, : \, \, \tau(S) = S\}$, which is clearly a subgroup of $\mathbb{S}_n$. Now, if $\Coup(E) = K_n$, it follows that $F^k = \mathbb{S}_n$ and $\Aut(\Cay(\mathbb{S}_n, T), F^k) = \Aut(\Cay(\mathbb{S}_n,T))$. The following results establish a characterization of $\Aut(\Cay(\mathbb{S}_n, T), F^k)$ when $\Coup(E) \neq K_n$. 

\begin{theorem} \label{Thm:DirectSubgroupFk} 
Let $\Coup(E)$ be connected and not equal to $C_4$ or $K_n$. Then, $\Aut(\Cay(\mathbb{S}_n, T), F^k)$ is isomorphic to $\mathbb{S}_n(g^k) \times \Aut(\Coup(E))$.
\end{theorem}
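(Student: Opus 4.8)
The plan is to parametrise every automorphism of $\Cay(\mathbb{S}_n, T)$ explicitly and then read off which ones preserve the distinguished set $F^k$ from~\eqref{Def:Fk}. Since $\Coup(E)$ is connected and different from $C_4$ and $K_n$, Theorem~\ref{Thm:AllNormal} gives $\Aut(\Cay(\mathbb{S}_n,T)) \cong \mathbb{S}_n \times \Aut(\Coup(E))$; combined with the injectivity of $\theta$ from Theorem~\ref{Thm:DirectSubgroup} and a cardinality count (an injective homomorphism between finite groups of equal order is bijective), this shows $\theta$ is a bijection onto the full automorphism group. Hence every $\rho \in \Aut(\Cay(\mathbb{S}_n,T))$ equals $\theta(a,b)$ for a unique $(a,b) \in \mathbb{S}_n \times \Aut(\Coup(E))$, and the task reduces to characterising the pairs $(a,b)$ for which $\theta(a,b)(F^k) = F^k$. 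Once this set is identified as $\mathbb{S}_n(g^k) \times \Aut(\Coup(E))$, injectivity of $\theta$ yields the claimed isomorphism.

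Next I would translate the set-stabiliser condition into a combinatorial one. Writing out $\theta(a,b)(\tau)^{-1}(g^k) = b\,\tau^{-1}a^{-1}(g^k)$ and letting $\pi := \tau^{-1}$ range over all of $\mathbb{S}_n$, the requirement $\theta(a,b)(F^k)=F^k$ becomes $\pi(g^k) \in E \iff b\,\pi\, a^{-1}(g^k) \in E$ for all $\pi$. The key simplification is that $b \in \Aut(\Coup(E))$ preserves $E$, so $b\,\pi\,a^{-1}(g^k) \in E \iff \pi\,a^{-1}(g^k) \in E$; the factor $b$ drops out entirely and the condition collapses to
\[
\pi(g^k) \in E \iff \pi(a^{-1}(g^k)) \in E \qquad \text{for all } \pi \in \mathbb{S}_n.
\]
The easy inclusion is now immediate: if $a \in \mathbb{S}_n(g^k)$, i.e.\ $a^{-1}(g^k) = g^k$, both sides coincide for every $\pi$, so $\mathbb{S}_n(g^k)\times\Aut(\Coup(E))$ maps under $\theta$ into $\Aut(\Cay(\mathbb{S}_n,T),F^k)$.

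The main obstacle is the converse: showing that if $a^{-1}(g^k)\neq g^k$ then the displayed equivalence fails for some $\pi$, so that such $(a,b)$ do not stabilise $F^k$. This is where the hypotheses on $\Coup(E)$ enter, via a case analysis on the two distinct two-element sets $g^k$ and $a^{-1}(g^k)$. If they share exactly one qubit, I would exhibit a vertex of $\Coup(E)$ with both a neighbour and a non-neighbour---which exists since connectedness forces a neighbour and non-completeness forces a non-neighbour---and take $\pi$ sending the shared qubit there, one private qubit to the neighbour and the other to the non-neighbour, so exactly one image lies in $E$. If $g^k$ and $a^{-1}(g^k)$ are vertex-disjoint (forcing $n \geq 4$), I instead need a vertex-disjoint edge/non-edge pair of $\Coup(E)$ onto which $\pi$ maps the two sets. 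Proving that such a disjoint pair always exists for a connected, non-complete graph other than $C_4$ is the delicate point: for $n \geq 5$ a direct argument shows that ``every edge meets every non-edge'' would force an isolated vertex, contradicting connectedness, while the finitely many cases at $n=4$ (diamond, paw, $P_4$, $K_{1,3}$) are checked individually, with $C_4$ being precisely the excluded exception.

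Combining the two directions, the set of pairs $(a,b)$ with $\theta(a,b)(F^k)=F^k$ is exactly $\mathbb{S}_n(g^k)\times\Aut(\Coup(E))$. Since $\theta$ is an injective homomorphism, its restriction to this subgroup is an isomorphism onto $\Aut(\Cay(\mathbb{S}_n,T),F^k)$, which establishes the theorem.
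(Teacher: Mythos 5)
Your proposal is correct and follows essentially the same route as the paper's proof: reduce to the parametrisation $\rho = \theta(a,b)$, observe that $a \in \mathbb{S}_n(g^k)$ suffices, and for $a \notin \mathbb{S}_n(g^k)$ exhibit a permutation sending $g^k$ to an edge and $a^{-1}(g^k)$ to a non-edge (sharing a vertex or disjoint according to $|g^k \cup a^{-1}(g^k)|$), which is exactly the paper's choice of $e$, $f$ and $\hat{\tau}$. You fill in two details the paper leaves terse---deriving surjectivity of $\theta$ from Theorem~\ref{Thm:AllNormal} by a counting argument, and verifying that a disjoint edge/non-edge pair exists for every connected non-complete graph other than $C_4$---but the argument is the same.
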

\begin{proof}
Let $\theta$ be the group homomorphism defined in~\eqref{Eq:theta}. We now consider its restriction to the subgroup $\mathbb{S}_n(g^k) \times \Aut(\Coup(E))$, which we denote by $\theta_r$. Then its image $\theta_r(\mathbb{S}_n(g^k) \times \Aut(\Coup(E)))$ is clearly a subgroup of $\Aut(\Cay(\mathbb{S}_n, T))$. Since $\theta$ is injective by Theorem~\ref{Thm:DirectSubgroup}, so is $\theta_r$, and thus~$\theta_r(\mathbb{S}_n(g^k) \times \Aut(\Coup(E)))$ is isomorphic to $\mathbb{S}_n(g^k) \times \Aut(\Coup(E))$. 

We now prove that the set $\theta_r(\mathbb{S}_n(g^k) \times \Aut(\Coup(E)))$ is a subgroup of $\Aut(\Cay(\mathbb{S}_n, T), F^k)$. Let~${a \in \mathbb{S}_n(g^k)}$ and $b \in \Aut(\Coup(E))$. Then $\theta_r(a, b)$ is the mapping $\tau \mapsto a \tau b^{-1}$. Now, let~$\tau \in F^k$, i.e.,~$\tau^{-1}(g^k) \in E$. Using the fact that $a(g^k) = g^k$ and $b$ maps pairs in~$E$ to pairs in $E$, we obtain
\begin{align*}
(a\tau b^{-1})^{-1}(g^k) = (b \tau^{-1} a^{-1})(g^k) \in E,
\end{align*}
which implies $a \tau b^{-1} \in F^k$. So, $\theta_r(a,b) \in \Aut(\Cay(\mathbb{S}_n, T), F^k)$, from where it follows that~$\theta_r(\mathbb{S}_n(g^k) \times \Aut(\Coup(E)))$ is a subgroup of $\Aut(\Cay(\mathbb{S}_n, T), F^k)$. 

Next, we show that 
it is actually the full automorphism group. It suffices to show that any element in $\Aut(\Cay(\mathbb{S}_n, T), F^k)$ is of the form~$\theta_r(a,b)$ for some $a \in \mathbb{S}_n(g^k)$ and $b \in \Aut(\Coup(E))$. Let $\rho \in \Aut(\Cay(\mathbb{S}_n, T), F^k)$. By Theorem~\ref{Thm:DirectSubgroup}, we know that $\rho: \tau \mapsto a \tau b^{-1}$ for some $a \in \mathbb{S}_n, b \in \Aut(\Coup(E))$.  Suppose~$a \notin \mathbb{S}_n(g^k)$. Let $g^k$ be the pair $\{{\color{black}q_1,q_2}\}$. Then there exist~$k_1, k_2$ such that $a(k_1) = {\color{black}q_1}$ and~$a(k_2) = {\color{black}q_2}$, with~${\{k_1, k_2\} \neq \{{\color{black}q_1,q_2}\}}$. 
Now, we select two pairs of vertices $e \in E$ and $f \notin E$ as follows. If~${|\{k_1, k_2, {\color{black}q_1, q_2}\}| = 3}$, take~$e$ and~$f$ such that they share one vertex, otherwise take~$e$ and~$f$ disjoint. The only cases in which such selection is not possible, is when the subgraph induced by any three distinct vertices is a clique or for each edge in $E$ the graph resulting from deleting the edge is a clique. The only connected coupling graphs that satisfy either of these properties are $C_4$ and $K_n$. However, these graph structures are forbidden by the theorem statement.

Now, take any $\hat{\tau} \in \mathbb{S}_n$ such that
\begin{align*}
\hat{\tau}(e) = \{{\color{black}q_1, q_2}\} \quad \text{and} \quad  \hat{\tau}(f) = \{k_1, k_2\}. 
\end{align*}
As $\hat{\tau}^{-1}(\{{\color{black}q_1,q_2}\}) = e \in E$, it follows that $\hat{\tau} \in F^k$. However,
\begin{align*}
\rho(\hat{\tau})^{-1}(\{{\color{black}q_1,q_2}\}) = (a \hat{\tau} b^{-1})^{-1}(\{{\color{black}q_1,q_2}\}) = b \hat{\tau}^{-1} a^{-1} (\{{\color{black}q_1, q_2}\}) = b \hat{\tau}^{-1} (\{k_1, k_2\}) = b(f) \notin E,
\end{align*}
since $b$ maps non-edges to non-edges in $\Coup(E)$. We conclude that~$\rho(\hat{\tau}) \notin F^k$, which implies that~$\rho \notin \Aut(\Cay(\mathbb{S}_n, T), F^k)$. Since this is a contradiction, each automorphism in $\Aut(\Cay(\mathbb{S}_n, T), F^k)$ is in~$\theta_r(\mathbb{S}_n(g^k) \times \Aut(\Coup(E)))$. 
\end{proof}

Let $G^k_{\sub}$ denote $\Aut(\Cay(\mathbb{S}_n, T), F^k)$. If $X$ consists of only one subgraph, $X$ has vertex set ${\{s\} \cup V^{1} \cup \{t\}}$ and one can verify that in that case ${\color{black}\{}\id_{\{s\}}{\color{black}\}} \times G^1_{\sub} \times {\color{black}\{}\id_{\{t\}}{\color{black}\}}$ is $\Aut(X)$. Now, suppose $X$ has two subgraphs. Then, $H^1$ corresponds to gate~$g^1$ and $H^2$ corresponds to a possibly different gate $g^2$. In the sequel, we study how this affects the automorphism group of $X$. 

To that end, we need two intermediate results. For a set $S \subseteq [n]$, let $C(\mathbb{S}_n(S))$ denote the centralizer subgroup of $\mathbb{S}_n(S)$ which is defined as
\begin{align}
    C(\mathbb{S}_n(S)) = \left\{\tau \in \mathbb{S}_n \, : \, \, \tau \pi = \pi \tau \text{ for all } \pi \in \mathbb{S}_n(S) \right\}.
\end{align}
When $n \leq 2$, we know that $\mathbb{S}_n$ is abelian and thus $C(\mathbb{S}_n(S)) = \mathbb{S}_n$. Otherwise, we show that the centralizer subgroup is contained in $\mathbb{S}_n(S)$. 

\begin{lemma} \label{Lem:Centralizer} Let $n \geq 3$. Then, we have $C(\mathbb{S}_n(S)) \subseteq \mathbb{S}_n(S)$ for all $S \subseteq [n]$. 
\end{lemma}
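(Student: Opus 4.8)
The plan is to lean on the explicit description of the setwise stabilizer together with the behaviour of conjugation on transpositions. Writing $\bar S := [n] \setminus S$, recall that $\mathbb{S}_n(S)$ consists of exactly those permutations preserving $S$ as a set, so it is the internal direct product $\Sym(S) \times \Sym(\bar S)$. In particular, for any two distinct elements lying in the same block (both in $S$, or both in $\bar S$), the transposition swapping them belongs to $\mathbb{S}_n(S)$. I would exploit precisely these transpositions to pin down where a centralizing element can send the points of $S$.

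The workhorse is the conjugation identity $\tau (i~j) \tau^{-1} = (\tau(i)~\tau(j))$, already used earlier in the excerpt. It shows that a permutation $\tau$ commutes with the transposition $(i~j)$ if and only if $\{\tau(i), \tau(j)\} = \{i, j\}$. So let $\tau \in C(\mathbb{S}_n(S))$ and first suppose $|S| \geq 2$. Fixing an arbitrary $a \in S$, I choose some $a' \in S \setminus \{a\}$, which exists by the size assumption. Since $(a~a') \in \mathbb{S}_n(S)$ and $\tau$ centralizes this group, the criterion gives $\{\tau(a), \tau(a')\} = \{a, a'\}$, whence $\tau(a) \in \{a, a'\} \subseteq S$. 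As $a \in S$ was arbitrary, $\tau(S) \subseteq S$, and since $\tau$ is a bijection this forces $\tau(S) = S$, i.e.\ $\tau \in \mathbb{S}_n(S)$.

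It then remains to treat $|S| \leq 1$, and here I would argue by the symmetry between $S$ and its complement: one has $\mathbb{S}_n(S) = \mathbb{S}_n(\bar S)$ and $\tau(S) = S \iff \tau(\bar S) = \bar S$, so the statement for $S$ is equivalent to the statement for $\bar S$. When $|S| \leq 1$ the hypothesis $n \geq 3$ yields $|\bar S| = n - |S| \geq 2$, so the argument of the previous paragraph applies verbatim to $\bar S$ and concludes $\tau(\bar S) = \bar S$, hence $\tau(S) = S$. This is where $n \geq 3$ enters, and it is the only genuine subtlety I anticipate: the argument needs at least one of the two blocks to contain two elements so that a nontrivial transposition is available, and the sole obstruction is $|S| = |\bar S| = 1$, i.e.\ $n = 2$, which is exactly the abelian situation excluded by the hypothesis. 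The degenerate subsets $S = \emptyset$ and $S = [n]$ are covered automatically, since then $\tau(S) = S$ holds for every $\tau$. Combining the cases proves $C(\mathbb{S}_n(S)) \subseteq \mathbb{S}_n(S)$ for all $S \subseteq [n]$.
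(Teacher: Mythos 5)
Your proof is correct and follows essentially the same route as the paper: both reduce to the case $|S|\geq 2$ via $\mathbb{S}_n(S)=\mathbb{S}_n([n]\setminus S)$ and then test the centralizing element against a transposition of two points of $S$. The only difference is cosmetic — you argue directly via the conjugation identity $\tau(i~j)\tau^{-1}=(\tau(i)~\tau(j))$, whereas the paper argues by contradiction by evaluating both compositions at a point.
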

\begin{proof}
Since $\mathbb{S}_n(S) = \mathbb{S}_n([n] \setminus S)$, we may assume that $|S| \geq 2$. Now, let $\tau \in C(\mathbb{S}_n(S))$ and assume for the sake of contradiction that $\tau \notin \mathbb{S}_n(S)$. Then there exist distinct $i, j \in S$ such that $\tau(i) \notin S$. Now, consider the transposition $(i~j)$. We have $(i~j) \tau(i) = \tau(i)$, while~${\tau (i~j) (i) = \tau(j)}$. Hence, $\tau$ and~$(i~j)$ do not commute, while $(i~j) \in \mathbb{S}_n(S)$. Therefore,~$\tau \notin C(\mathbb{S}_n(S))$, which is a contradiction.
\end{proof}
Exploiting Lemma~\ref{Lem:Centralizer}, we can show the following result for general sets $F$ of the form~\eqref{Def:Fk}.

\begin{theorem} \label{Thm:multiplesubgraphs}
Let $i, j \in [n]$, $n \geq 3$, and let $F = \{\tau \in \mathbb{S}_n \, : \, \, \{\tau^{-1}(i), \tau^{-1}(j)\} \in E\}$. Let~$a, b \in \mathbb{S}_n$ and suppose that $a \tau b^{-1} = \tau$ for all $\tau \in F$. Then $a = b = \id$. 
\end{theorem}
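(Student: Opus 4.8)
The plan is to pin down $b$ first and then recover $a$. I would begin by rewriting the hypothesis $a\tau b^{-1} = \tau$ in conjugation form,
\begin{align*}
a = \tau b \tau^{-1} \quad \text{for all } \tau \in F,
\end{align*}
which immediately shows that $b$ is heavily constrained: as $\tau$ ranges over $F$, all the conjugates $\tau b \tau^{-1}$ must coincide, since they all equal the single element $a$. To exploit this, I would organize $F$ according to which edge of $\Coup(E)$ a permutation ``uses''. Assuming $i \neq j$ (so that $\{i,j\}$ is a genuine pair), for a fixed edge $\{u,v\} \in E$ the permutations $\tau \in F$ with $\tau^{-1}(\{i,j\}) = \{u,v\}$ form exactly a left coset $\tau_0\, \mathbb{S}_n(\{u,v\})$, where $\tau_0$ is any bijection sending $\{u,v\}$ onto $\{i,j\}$; such a $\tau_0$ exists and lies in $F$.

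Next I would use the internal freedom of this coset. Writing $\tau = \tau_0 \pi$ with $\pi \in \mathbb{S}_n(\{u,v\})$ and comparing the relation $a = \tau_0 \pi b \pi^{-1}\tau_0^{-1}$ against the case $\pi = \id$, namely $a = \tau_0 b \tau_0^{-1}$, the outer factors $\tau_0$ and $\tau_0^{-1}$ cancel and yield $\pi b = b \pi$ for every $\pi \in \mathbb{S}_n(\{u,v\})$. Hence $b \in C(\mathbb{S}_n(\{u,v\}))$, and Lemma~\ref{Lem:Centralizer} (applied with $S = \{u,v\}$ and $n \geq 3$) gives $b \in \mathbb{S}_n(\{u,v\})$, i.e.\ $b$ fixes the edge $\{u,v\}$ setwise. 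Since the edge was arbitrary, this shows that $b$ preserves \emph{every} edge of $\Coup(E)$ setwise.

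Finally I would rule out a nontrivial $b$ using the connectivity of $\Coup(E)$ (the standing assumption) together with $n \geq 3$. If $b(w) \neq w$ for some vertex $w$ of degree at least two, then the two edges incident to $w$ force $b(w)$ to be two distinct neighbours of $w$ simultaneously, a contradiction; and if $w$ is a leaf with unique neighbour $x$, then $b$ must swap $w$ and $x$, but $x$ has degree at least two (as the graph is connected with $n \geq 3$), so the previous case applied to $x$ again yields a contradiction. Therefore $b = \id$, and the original relation collapses to $a\tau = \tau$ for all $\tau \in F$; since $F \neq \emptyset$, this gives $a = \id$. I expect the main obstacle to lie in this last stretch: correctly reducing the coset comparison to a clean centralizer condition so that Lemma~\ref{Lem:Centralizer} can be invoked, and then arguing combinatorially that an edge-setwise-preserving permutation of a connected graph on at least three vertices is the identity. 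This is precisely where both $n \geq 3$ and connectivity are indispensable, since already a single isolated edge $\{u,v\}$ admits the nontrivial solution $b = (u~v)$.
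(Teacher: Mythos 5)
Your proposal is correct and follows essentially the same route as the paper's proof: for each edge $e \in E$ you show $b$ commutes with all of $\mathbb{S}_n(e)$, invoke Lemma~\ref{Lem:Centralizer} to get $b \in \mathbb{S}_n(e)$, intersect over all edges and use connectivity to force $b = \id$, whence $a = \id$. Your coset formulation ($\{\tau \in F : \tau(\{u,v\}) = \{i,j\}\} = \tau_0\,\mathbb{S}_n(\{u,v\})$) is a slightly cleaner packaging of the paper's claim that every $\pi \in \mathbb{S}_n(e)$ equals $\tau^{-1}\tau'$ with $\tau,\tau' \in F$, and your explicit leaf/degree argument fills in the final step the paper leaves to the reader, but these are presentational refinements rather than a different proof.
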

\begin{proof}
Observe that for all $\tau_1, \tau_2 \in F$ we have:
\begin{align*}
    \tau_1 b \tau_1^{-1} = a = \tau_2 b \tau_2^{-1}.
\end{align*}
Now, let us fix an edge $e \in E$. We can write any element $\pi \in \mathbb{S}_n(e)$ in the form $\pi = \tau^{-1}\tau'$ for some~$\tau, \tau' \in F$. To verify this, observe that since $e \in E$ there exist elements in $F$ that map $e$ to~$\{i, j\}$. By combining two such elements $\tau$ and $\tau'$, the composition $\tau^{-1}\tau'$ always maps $e$ back to~$e$. On the complement $[n] \setminus e$ we find all possible permutations in $F$, so we can always find 
$\tau, \tau' \in F$ such that $\tau^{-1}\tau'$ acts like $\pi$ on the set $[n] \setminus e$. 

Let $\tau_1, \tau_2 \in F$ be such that $\pi = \tau_1^{-1}\tau_2$. Then we know $\tau_1 b \tau_1^{-1} = \tau_2 b \tau_2^{-1}$, which can be rewritten as~${\pi^{-1} b \pi = b}$. As $\pi \in \mathbb{S}_n(e)$ was chosen arbitrarily, it follows that $\pi^{-1}b \pi = b$ for all $\pi \in \mathbb{S}_n(e)$, and thus $b \in C(\mathbb{S}_n(e))$. We now apply Lemma~\ref{Lem:Centralizer} with $S = e$. Since $n \geq 3$, it follows that $b \in \mathbb{S}_n(e)$. 

By repeating this argument for all $e \in E$, it follows that $b \in \bigcap_{e \in E} \mathbb{S}_n(e)$. As $\Coup(E)$ is connected, we conclude that $b = \id$, from which it immediately follows that $a = \id$ as well. 
\end{proof}

Let $\rho \in \Aut(X)$ where $X$ consists of two subgraphs. The case $n \leq 2$ leads to a trivial NNCP instance. Therefore, we may assume that $n \geq 3$. Then the restriction of $\rho$ to $H^1$ is an element of~$G^1_\sub$. In particular, each $\tau^1 \in F^1$ is mapped to $\rho(\tau^1) { \color{black} \, \in F^1}$ (here the superscript $1$ is added to indicate that $\tau^1$ is a vertex of $H^1$). In order to maintain the arc structure of $D^1$, it follows that the restriction of $\rho$ to $H^2$ should not only be an element of $G^2_\sub$, it should also pointwise fix the elements $\rho(\tau^2)$ for all $\tau^2 \in F^1$. Applying the result of Theorem~\ref{Thm:multiplesubgraphs}, the restriction of $\rho$ to $H^2$ should be the same automorphism as the restriction to $H^1$. On top of that, this restriction must also be in $G^2_\sub$. Thus,~$\rho$ is of the form $(\id_{\{s\}}, \pi, \pi, \id_{\{t\}})$ with $\pi \in G^1_{\sub} \cap G^2_{\sub}$. Extending this argument to larger $k$, let us define the following groups: 

\begin{align}
G_\sub & := \bigcap_{k = 1}^m G^k_{\sub} \cong \bigcap_{k = 1}^m \mathbb{S}_n(g^k) \times \Aut(\Coup(E)),  \label{Def:G_sub} \\
G_X & := \left\{ (\id_{\{s\}}, \rho, \ldots, \rho,  \id_{\{t\}}) \in {\color{black}\{}\id_{\{s\}}{\color{black}\}} \times \prod_{k=1}^m \Aut(H^k) \times {\color{black}\{}\id_{\{t\}}{\color{black}\}} \, : \, \, \rho \in G_{\sub} \right\} \label{Def:G_X}. 
\end{align}
By construction, $G_X$ is the full automorphism group $\Aut(X)$ in case $\Coup(E)$ is connected and not equal to $C_4$ or $K_n$. Observe that if these conditions are not met, $G_X$ is still a subgroup of $\Aut(X)$.

To get rid of of the intersection in the definition of $G_\sub$, we exploit the notion of the gate graph~$(Q,U)$ of a quantum circuit $\Gamma$, see Definition~\ref{Def:GateGraph}.
If $g^{k_1}$ is in $C$ with $g^{k_1} = \{{\color{black}q_1,q_2}\}$, this implies that the set $\{{\color{black}q_1,q_2}\}$ must be setwise fixed by all permutations in the group $\mathbb{S}_n(g^{k_1})$. If also~$g^{k_2} \in C$ with~$g^{k_2} = \{{\color{black}q_2, q_3}\}$, there is no other option than fixing ${\color{black}q_1}, {\color{black}q_2}$ and ${\color{black}q_3}$ elementwise in the group intersection~$\mathbb{S}_n(g^{k_1}) \cap \mathbb{S}_n(g^{k_2})$. From this observation, we can partition all qubits in $Q$ based on whether they belong to a connected component of size one, two or at least three in the gate graph $(Q,U)$. This leads to the introduction of the fixing pattern of $\Gamma$. 

\begin{definition} \label{Def:fixingpattern}
Let $\Gamma = (Q,C)$ be a quantum circuit on $n$ qubits. We define the fixing pattern of~$\Gamma$ as the partition $\mathcal{F} := \{S_1, \ldots, S_l\}$ of $Q$ such that each $S_i$ is either:
\begin{itemize}
\item a single qubit contained in a connected component of the gate graph $(Q,U)$ of size at least 3; 
\item a pair of qubits $\{{\color{black}q_1,q_2}\}$ that forms a connected component in the gate graph $(Q,U)$; 
\item the set of all {\color{black}isolated vertices} in the gate graph $(Q,U)$, which we denote by the free set in $\mathcal{F}$.
\end{itemize} 
Moreover, we define $f$ as the size of the free set, $p$ as the number of pairs and~${c~(=~n~-2p -f)}$ to be the number of qubits in a connected component of size at least~3 in~$(Q,U)$.
\end{definition}

Observe that $\mathcal{F}$ can be easily constructed by a scan of the connected components of~$(Q,U)$. The extreme cases are $\mathcal{F} = \{Q\}$ if $\Gamma$ contains no gates, whereas $\mathcal{F} = \{ \{{\color{black}q_1}\}, \ldots, \{{\color{black}q_2}\}\}$ if~$(Q,U)$ is connected. The group $\cap_{k=1}^m\mathbb{S}_n(g^k)$ consists of all permutations that setwise fix the elements in $\mathcal{F}$. To simplify notation, we define
\begin{align*}
\mathbb{S}_n(\mathcal{F}) := \{ a \in \mathbb{S}_n \, : \, \, a(S_i) = S_i \text{ for all } i \in [l] \}. 
\end{align*}
We know that $G_\sub \cong \mathbb{S}_n(\mathcal{F}) \times \Aut(\Coup(E))$, which implies that
\begin{align} \label{Eq:GXreformulated}
G_X \cong \mathbb{S}_n(\mathcal{F}) \times \Aut(\Coup(E)).
\end{align}
It follows from a simple counting argument that $|\mathbb{S}_n(\mathcal{F})| = 2^p f!$.

\subsection[Orbit and orbital structure of group action on $X$]{Orbit and orbital structure of group action on \boldmath $X$} \label{Subsec:Orbit} 
The elements of $G_X$ act on the vertices and arcs of $X$. In this section we study this group action in terms of its induced orbit and orbital structure, which will become of key importance in the symmetry reduction explained in Section~\ref{Sec:SymmetryReduction}.

Each automorphism in $G_X$ maps the vertex set of $X$ to itself. Given a vertex $\tau \in V$, the orbit of $\tau$ is the set of vertices to which $\tau$ is mapped to by the elements in $G_X$, i.e., all vertices~$\rho(\tau)$ with~$\rho \in G_X$.
The set of orbits forms a partition of $V$, which is written as the quotient~$V/G_X$.

Similarly, $G_X$ acts on the arc set $A$ by $\rho((\tau_1, \tau_2))=(\rho(\tau_1),\rho(\tau_2))$ for all $\rho \in G_X$. We denote the set of orbitals by $A/G_X$. Note that arcs in the same orbital have their initial vertices in the same orbit.
It is therefore natural to first understand the orbit structure of the action of $G_X$ on $V$. 

\medskip

Let $\Orb(\tau)$ denote the orbit of vertex $\tau \in V$. It follows from the construction of $G_X$ that $\Orb(s) = \{s\}$ and $\Orb(t) = \{t\}$. Moreover, the subgraphs $H^k$, $k \in [m]$, are invariant under the action of $G_X$ on $X$. For that reason, we can restrict ourselves to identifying the orbits within each subgraph $H^k$ under the action of $G_{\sub}$. Since all subgraphs are identical, this provides the orbit description for the entire graph $G_X$. 

Similar as before, we use $\tau$ to denote a vertex, as each vertex represents a qubit order in~$\mathbb{S}_n$. For all $k \in [m]$ and all $\tau \in V^k$, we obtain 
\begin{align}
& \,\,\,\, \begin{aligned}\Orb(\tau) & = \left\{\rho(\tau) \, : \, \, \rho \in G_\sub \right\} = \left\{ a \tau b^{-1} \, : \, \, a \in \mathbb{S}_n(\mathcal{F}),~ b \in \Aut(\Coup(E)) \right\}. \end{aligned} \label{Def:Orb}
\intertext{We also define the stabilizer subgroup with respect to $\tau$ under the action of $G_\sub$ as}
& \begin{aligned}\Stab(\tau) & := \left \{ \rho \in G_\sub \, : \, \, \rho(\tau) = \tau \right\}  \\
& \,\, \cong \left \{ (a,b) \in \mathbb{S}_n(\mathcal{F}) \times \Aut(\Coup(E)) \, : \, \, a \tau b^{-1} = \tau \right\}. \end{aligned} \label{Def:Stab}
\end{align}
The condition given in~\eqref{Def:Stab} for $(a,b)$ to act as a stabilizer can be rewritten as~${a = \tau b \tau^{-1}}$.
Thus, a pair~${(a,b) \in \mathbb{S}_n(\mathcal{F}) \times \Aut(\Coup(E))}$ corresponds to an element in $\Stab(\tau)$ if and only if the permutation $\tau b \tau^{-1}$ is in $\mathbb{S}_n(\mathcal{F})$ and $a = \tau b \tau^{-1}$. This implies that for all $S_i \in \mathcal{F}$ we must have $\tau b \tau^{-1} (S_i) = S_i$, or equivalently, $b(\tau^{-1}(S_i)) = \tau^{-1}(S_i)$. Hence,~$b$ setwise fixes the {inverse fixing pattern} in $\mathcal{F}$ with respect to $\tau$. Let us define the subgroup~$B_\tau$ of $\Aut(\Coup(E))$ that consists of all such elements, i.e.,
\begin{align}
B_\tau := \left\{ b \in \Aut(\Coup(E)) \, : \, \, b \left( \tau^{-1}(S_i) \right) = \tau^{-1}(S_i) \quad \forall i \in [l] \right\}. 
\end{align} 
Since for each $b \in B_\tau$, there exists exactly one element $a \in \mathbb{S}_n(\mathcal{F})$ such that~\mbox{$a \tau b^{-1} = \tau$}, we know
\begin{align} \label{Eq:stab_tau_rewritten}
    \Stab(\tau) \cong \{ (a, b) \, : \, \, b \in B_\tau, \, a  = \tau b \tau^{-1}\},
\end{align}
in particular, we have $|\Stab(\tau)| = |B_\tau|$. 

As $B_\tau$ is a subgroup of $\Aut(\Coup(E))$, it acts on the edge set of $\Coup(E)$. The orbital of an edge $\{i,j\} \in E$ under this group action is the set of all edges $\{b(i), b(j)\}$ with $b \in B_\tau$. We denote by the quotient $E / B_\tau$ the set of orbitals under this group action. 

We can show that if $\tau_1$ and $\tau_2$ belong to the same orbit, then the subgroups~$B_{\tau_1}$ and~$B_{\tau_2}$ are conjugate subgroups. Moreover, the quotients of their actions on $E$ have the same cardinality.
\begin{lemma} \label{Lem:conjugatesubgroups}
Let $\tau_1$ and $\tau_2$ be two qubit orders with $\tau_2 = a \tau_1 b^{-1}$ for some $a \in \mathbb{S}_n(\mathcal{F})$ and~$b \in \Aut(\Coup(E))$. Then, 
\begin{enumerate}
\item[(i)] $B_{\tau_2} = b B_{\tau_1} b^{-1}$; 
\item[(ii)] there exists a bijection from $E / B_{\tau_1}$ to $E / B_{\tau_{2}}$ given by left multiplication with $b$.  
\end{enumerate}
\end{lemma}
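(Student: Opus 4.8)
The plan is to reduce both parts to a single computation describing how the inverse fixing pattern transforms under the relation $\tau_2 = a\tau_1 b^{-1}$, and then read off the conjugation statement and the induced bijection from it. The key observation is that, since $a \in \mathbb{S}_n(\mathcal{F})$ setwise fixes each block of $\mathcal{F}$ (so that $a^{-1}(S_i) = S_i$ for all $i \in [l]$), we have
\begin{align*}
\tau_2^{-1}(S_i) = b\,\tau_1^{-1} a^{-1}(S_i) = b\bigl(\tau_1^{-1}(S_i)\bigr) \qquad \forall i \in [l].
\end{align*}
In words, the inverse fixing pattern of $\tau_2$ is exactly the image under $b$ of the inverse fixing pattern of $\tau_1$. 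Both parts of the lemma follow from this identity.

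For part (i), I would characterize membership in $B_{\tau_2}$ directly from the definition. An element $c \in \Aut(\Coup(E))$ lies in $B_{\tau_2}$ if and only if it setwise fixes each set $\tau_2^{-1}(S_i) = b(\tau_1^{-1}(S_i))$. Writing the condition $c\bigl(b(\tau_1^{-1}(S_i))\bigr) = b(\tau_1^{-1}(S_i))$ and applying $b^{-1}$ on the left, this is equivalent to $(b^{-1}cb)\bigl(\tau_1^{-1}(S_i)\bigr) = \tau_1^{-1}(S_i)$ for all $i$, i.e.\ to $b^{-1}cb \in B_{\tau_1}$. Hence $c \in B_{\tau_2}$ iff $c \in bB_{\tau_1}b^{-1}$, which gives $B_{\tau_2} = bB_{\tau_1}b^{-1}$.

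For part (ii), I would define the candidate map $\Phi \colon E/B_{\tau_1} \to E/B_{\tau_2}$ by sending the $B_{\tau_1}$-orbital of an edge $e \in E$ to the $B_{\tau_2}$-orbital of $b(e) = \{b(i), b(j)\}$, where $e = \{i,j\}$; note that $b$ permutes $E$ bijectively since $b \in \Aut(\Coup(E))$. Well-definedness and injectivity both follow from part (i): for $b' \in B_{\tau_1}$ one has $b(b'(e)) = (bb'b^{-1})(b(e))$, and $bb'b^{-1}$ ranges exactly over $B_{\tau_2}$ as $b'$ ranges over $B_{\tau_1}$, so $e'$ lies in the $B_{\tau_1}$-orbital of $e$ if and only if $b(e')$ lies in the $B_{\tau_2}$-orbital of $b(e)$. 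Surjectivity is then immediate because $b$ is a bijection on $E$, so every edge can be written as $b(e)$ for some $e \in E$, and the $B_{\tau_1}$-orbital of that $e$ maps to the prescribed target orbital.

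I do not expect a genuine obstacle here; the only point requiring care is tracking the direction of the conjugation in part (i) and verifying that the induced map on orbitals is well-defined rather than merely defined on representatives. Once part (i) is in place, the fact that $b$-conjugation carries $B_{\tau_1}$ onto $B_{\tau_2}$ makes the well-definedness, injectivity, and surjectivity of $\Phi$ purely formal.
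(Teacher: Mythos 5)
Your proof is correct and follows essentially the same route as the paper's: part (i) is the same direct computation showing $\tau_2^{-1}(S_i)=b(\tau_1^{-1}(S_i))$ and unwinding the stabilizer condition into a conjugation, and part (ii) is the same observation that $b$ carries $B_{\tau_1}$-orbitals to $B_{\tau_2}$-orbitals via $b(b'(e))=(bb'b^{-1})(b(e))$. Your write-up is slightly more explicit about well-definedness, injectivity, and surjectivity of the induced map, which the paper leaves as an easy verification.
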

\begin{proof} 
\begin{enumerate}
    \item[$(i)$] Exploiting the fact that $a^{-1}(S_i) = S_i$ for all $i \in [l]$, we obtain
\begin{align*}
    B_{\tau_2} & = \left\{ b_2 \in \Aut(\Coup(E)) \, : \, \, b_2 \left( \tau_2^{-1}(S_i) \right) = \tau_2^{-1}(S_i) \quad \forall i \in [l] \right\} \\
    & = \left\{ b_2 \in \Aut(\Coup(E)) \, : \, \, b_2 \left( (a \tau_1 b^{-1})^{-1}(S_i) \right) = (a \tau_1 b^{-1})^{-1}(S_i) \quad \forall i \in [l] \right\} \\
    & = \left\{ b_2 \in \Aut(\Coup(E)) \, : \, \, b_2 b \tau_1^{-1} a^{-1} (S_i)  = b \tau_1^{-1} a^{-1}(S_i) \quad \forall i \in [l] \right\} \\
    & = \left\{ b_2 \in \Aut(\Coup(E)) \, : \, \, b^{-1} b_2 b \left( \tau_1^{-1} (S_i) \right)  =  \tau_1^{-1} (S_i) \quad \forall i \in [l] \right\} \\
    & = \left\{b b_1 b^{-1} \in \Aut(\Coup(E)) \, : \, \, b_1 \left( \tau_1^{-1} (S_i) \right)  =  \tau_1^{-1} (S_i) \quad \forall i \in [l] \right\} \\
    & = b B_{\tau_1} b^{-1}. 
\end{align*}

 \item[$(ii)$] This fact follows directly from $(i)$, by observing that
 \begin{align*}
     b~\Orb_{B_{\tau_1}}(i) = \left\{ b b_1b^{-1} (b(i)) \, : \, \, b_1 \in B_{\tau_1} \right\} = \left\{ b_2(b(i)) \, : \, \, b_2 \in B_{\tau_2} \right\} = \Orb_{B_{\tau_2}}(b(i)). 
 \end{align*}
One easily verifies that left multiplication by $b$ gives a bijection. 

\end{enumerate}

\end{proof}
\noindent As a consequence of the well-known orbit-stabilizer theorem, we establish the following relation between $\Orb(\tau)$ and $\Stab(\tau)$: 
\begin{align} \label{Eq:OrbitStabTheorem}
|\Orb(\tau)| = \frac{|G_\sub |}{|\Stab(\tau)|} = \frac{2^p f! \cdot |\Aut(\Coup(E))|}{|B_\tau|} .
\end{align}
Of course,  $\Orb(\tau)$ does not depend on the particular choice of the representative $\tau$ in the orbit. 

\medskip

To increase our understanding of $\Orb(\tau)$, we rewrite~\eqref{Def:Orb} as follows: 
\begin{align} \label{Eq:OrbitCosets} 
    \Orb(\tau) = \mathbb{S}_n(\mathcal{F})\tau \Aut(\Coup(E)) = \bigcup_{\tilde{\tau} \in \mathbb{S}_n(\mathcal{F})\tau} \tilde{\tau} \Aut(\Coup(E)). 
\end{align}
In other words, if $\mathbb{S}_n(\mathcal{F})$ is trivial, then the orbit partition of $V^k$ is given by the left cosets of $\Aut(\Coup(E))$ in $G_\sub$. Otherwise, each orbit is the union of several left cosets of $\Aut(\Coup(E))$ in~$G_\sub$, where the union is determined by the elements in the right cosets of $\mathbb{S}_n(\mathcal{F})$ in $G_\sub$. 

Of particular importance in the symmetry reduction is the number of orbits in each subgraph. We let $V^k / G_X$ denote the set of orbits of vertices in $V^k$ under the action of $G_X$, although we formally refer to the action of $G_X$ restricted to $V^k$. We allow for this slight abuse of notation, in order to simplify the terminology in Section~\ref{Sec:SymmetryReduction}.

\begin{theorem} \label{Thm:QuotientVk} The number of orbits of $V^k$ under $G_X$ is 
$|V^k / G_X | = \mfrac{\sum_{\tau \in \mathbb{S}_n} |B_\tau|}{2^p f! \cdot |\Aut(\Coup(E))|}$. 
\end{theorem}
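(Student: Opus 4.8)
The plan is to count orbits via the standard reciprocal-weight identity rather than by any case analysis. Since the orbits of $V^k$ under $G_X$ partition $V^k$, and every vertex lying in a given orbit $O$ has orbit size exactly $|O|$, each orbit is counted once by summing the weight $1/|O|$ over its members. Recalling that $V^k \cong \mathbb{S}_n$ as a vertex set, this gives
\begin{align*}
|V^k / G_X| = \sum_{O \in V^k/G_X} 1 = \sum_{O \in V^k/G_X} \sum_{\tau \in O} \frac{1}{|O|} = \sum_{\tau \in \mathbb{S}_n} \frac{1}{|\Orb(\tau)|}.
\end{align*}

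First I would record that the action of $G_X$ restricted to $V^k$ coincides with the action of $G_\sub$, since every element of $G_X$ acts by the same automorphism $\rho \in G_\sub$ on each subgraph. Consequently the orbit-stabilizer relation~\eqref{Eq:OrbitStabTheorem} is available for every $\tau \in V^k$, and it yields
\begin{align*}
|\Orb(\tau)| = \frac{2^p f! \cdot |\Aut(\Coup(E))|}{|B_\tau|}, \qquad \text{so that} \qquad \frac{1}{|\Orb(\tau)|} = \frac{|B_\tau|}{2^p f! \cdot |\Aut(\Coup(E))|}.
\end{align*}
Substituting this into the counting identity and pulling the constant denominator out of the sum gives
\begin{align*}
|V^k / G_X| = \sum_{\tau \in \mathbb{S}_n} \frac{|B_\tau|}{2^p f! \cdot |\Aut(\Coup(E))|} = \frac{\sum_{\tau \in \mathbb{S}_n} |B_\tau|}{2^p f! \cdot |\Aut(\Coup(E))|},
\end{align*}
which is exactly the claimed formula.

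The argument is essentially routine once the reciprocal-weight counting identity is invoked; the only genuine ingredients are the orbit-stabilizer computation~\eqref{Eq:OrbitStabTheorem}, already established, together with the identity $|\Stab(\tau)| = |B_\tau|$ derived in~\eqref{Eq:stab_tau_rewritten}. The sole point requiring care is that the orbit sizes $|\Orb(\tau)|$ genuinely vary with $\tau$ (through the factor $|B_\tau|$), so one cannot simply divide $|\mathbb{S}_n|$ by a single common orbit size. The reciprocal-sum formulation is precisely what accommodates this variation and produces the aggregate $\sum_{\tau} |B_\tau|$ in the numerator; I therefore expect no substantive obstacle beyond making that bookkeeping explicit.
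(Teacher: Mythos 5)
Your proof is correct and is essentially the same as the paper's: both reduce the orbit count to $\sum_{\tau \in \mathbb{S}_n}|\Stab(\tau)|/|G_\sub|$ and then invoke $|\Stab(\tau)| = |B_\tau|$ together with $|G_\sub| = 2^p f!\cdot|\Aut(\Coup(E))|$. The only cosmetic difference is that the paper reaches this sum via Burnside's lemma followed by a double-counting swap, whereas you use the reciprocal-weight identity $\sum_{\tau} 1/|\Orb(\tau)|$ directly; these are the same computation phrased two ways.
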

\begin{proof}
Let $(V^k)^\rho$ denote the set of vertices in $V^k$ that are (pointwise) fixed by $\rho \in G_X$. Then, Burnside's lemma implies that $|V^k / G_X | = \frac{\sum_{\rho \in G_X}|(V^k)^\rho |}{|G_X|}$.
The sum in the numerator counts for every group element the number of vertices that are fixed. Alternatively, we can also sum over all vertices and count the number of group elements that stabilize the vertex. This leads to
\begin{align*}
    |V^k / G_X | = \frac{\sum_{\tau \in \mathbb{S}_n} |\Stab(\tau)| }{|\mathbb{S}_n(\mathcal{F}) \times \Aut(\Coup(E))|} = \frac{\sum_{\tau \in \mathbb{S}_n} |B_\tau|}{2^p f! \cdot |\Aut(\Coup(E))|}.
\end{align*}
\end{proof}

We now shift our focus to the analysis of the orbital structure of the arcs of $X$ under the action of $G_X$. Recall that $A$ consists of two types of arcs: arcs within a subgraph (the sets~$A^k$,~$k \in [m]$) and the arcs between the subgraphs (the sets $D^k, k \in \{0\}\cup [m]$). Since the sets~${A^1, \ldots, A^k}$ are identical and each set is invariant under the group action $G_X$, we can restrict our focus to the action of $G_\sub$ on a single subgraph. The orbital of an arc~${(\tau, \tau \sigma) \in A^k}$ corresponding to transposition~$\sigma = (i~j) \in T$ is given by
\begin{align*}
\Orb((\tau, \tau\sigma)) & := \left\{ (\rho(\tau), \rho(\tau \sigma )) \, : \, \, \rho \in G_\sub \right\}  \\
& \,\,= \left\{ (a \tau b^{-1}, a \tau \sigma b^{-1} \, : \, \, a \in \mathbb{S}_n(\mathcal{F}),~ b \in \Aut(\Coup(E)) \right\} \\
& \,\,= \left\{ (a \tau b^{-1}, a \tau b^{-1} (b(i)~b(j)) \, : \, \, a \in \mathbb{S}_n(\mathcal{F}),~ b \in \Aut(\Coup(E)) \right\},
\end{align*}
where the last line follows from the fact that $b(i~j)b^{-1} = (b(i)~b(j))$. 
This expression of $\Orb((\tau, \tau\sigma))$ implies that all arcs within the same orbital start at vertices within the same orbit and end at vertices within the same orbit (where the start- and end-orbits can differ). Moreover, the transpositions to which the arcs in $\Orb((\tau, \tau\sigma))$ correspond are related via $\Aut(\Coup(E))$, as the following lemma illustrates.
\begin{lemma} \label{Lem:BijectionOrbital}
    Let $\tau \in \mathbb{S}_n$. There exists a bijection between the orbitals starting from $\Orb(\tau)$ and the orbitals in $E / B_{\tau}$.
\end{lemma}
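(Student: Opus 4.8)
The plan is to exhibit an explicit map between the two sets and verify it is well-defined and bijective. Recall that an orbital starting from $\Orb(\tau)$ is determined, by the computation just above the lemma, by a choice of transposition $\sigma = (i~j) \in T$, i.e.\ by an edge $\{i,j\} \in E$, since $\Orb((\tau, \tau\sigma))$ records the arc leaving $\tau$ labelled by $\sigma$. On the other side, $E / B_\tau$ consists of the orbitals of edges in $E$ under the action of the subgroup $B_\tau \leq \Aut(\Coup(E))$. So the natural candidate is the map that sends the orbital $\Orb((\tau, \tau\sigma))$ with $\sigma$ corresponding to edge $\{i,j\}$ to the $B_\tau$-orbital $\Orb_{B_\tau}(\{i,j\}) \in E/B_\tau$. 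First I would define this assignment on the level of edges and then descend it to orbitals.

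\textbf{Well-definedness.} The main content is to check that two transpositions give the \emph{same} arc-orbital starting from $\Orb(\tau)$ if and only if their underlying edges lie in the same $B_\tau$-orbital. From the displayed expression, the arc $(\tau, \tau\sigma)$ with $\sigma=(i~j)$ lies in the same orbital as the arc $(\tau, \tau\sigma')$ with $\sigma'=(i'~j')$ precisely when there exist $a \in \mathbb{S}_n(\mathcal{F})$ and $b \in \Aut(\Coup(E))$ fixing the starting vertex, i.e.\ $a\tau b^{-1} = \tau$, and simultaneously mapping the transposition label via $\sigma' = (b(i)~b(j))$. By the characterization of the stabilizer in~\eqref{Eq:stab_tau_rewritten}, the condition $a\tau b^{-1}=\tau$ with $a \in \mathbb{S}_n(\mathcal{F})$ forces $b \in B_\tau$ (and pins down $a = \tau b \tau^{-1}$). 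Hence the two arcs share an orbital exactly when $\{i',j'\} = \{b(i), b(j)\}$ for some $b \in B_\tau$, which is precisely the statement that $\{i,j\}$ and $\{i',j'\}$ lie in the same orbital of $E/B_\tau$. This single equivalence simultaneously establishes that the map is well-defined (same arc-orbital $\Rightarrow$ same $B_\tau$-orbital) and injective (same $B_\tau$-orbital $\Rightarrow$ same arc-orbital).

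\textbf{Surjectivity and the anticipated obstacle.} Surjectivity is immediate, since every edge $\{i,j\} \in E$ yields a transposition $\sigma=(i~j) \in T$ and hence an arc $(\tau, \tau\sigma) \in A^k$ whose orbital maps to $\Orb_{B_\tau}(\{i,j\})$; as $\{i,j\}$ ranges over $E$ we hit every class in $E/B_\tau$. The step I expect to require the most care is the forward direction of the well-definedness equivalence, namely extracting from the orbital equality $\Orb((\tau,\tau\sigma)) = \Orb((\tau,\tau\sigma'))$ the existence of a group element that simultaneously fixes $\tau$ and carries $\sigma$ to $\sigma'$. The subtlety is that a priori the element $(a,b)$ witnessing membership in the same orbital need only map the \emph{arc} $(\tau,\tau\sigma)$ to $(\tau,\tau\sigma')$, and one must confirm that the first coordinate condition $a\tau b^{-1}=\tau$ indeed holds (rather than mapping $\tau$ to some other representative of $\Orb(\tau)$). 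This is where one restricts attention to arcs \emph{starting from the fixed vertex $\tau$}: any element sending such an arc to another arc with the same tail $\tau$ must stabilize $\tau$, so $(a,b) \in \Stab(\tau)$ and the reduction to $B_\tau$ via~\eqref{Eq:stab_tau_rewritten} applies cleanly.
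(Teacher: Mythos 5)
Your proposal is correct and follows essentially the same route as the paper's proof: both reduce to the arcs leaving the fixed vertex $\tau$ and show that two such arcs lie in the same orbital exactly when the corresponding edges lie in the same $B_\tau$-orbital, using the characterization of $\Stab(\tau)$ via $B_\tau$. Your version spells out more explicitly why the witnessing pair $(a,b)$ must stabilize $\tau$ (the paper argues this informally via a case analysis and Figure~\ref{Fig:OverviewOrbital}), which is a welcome sharpening but not a different argument.
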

\begin{proof}
    It suffices to consider the orbital partition of the arcs leaving $\tau$, i.e., $\delta^+(\tau, A^k)$. If $B_\tau$ is trivial, the stabilizer subgroup of $\tau$ in $G_\sub$ is trivial, implying that no two arcs in $\delta^+(\tau, A^k)$ belong to the same orbital. In that case, $E / B_\tau$ is just the partition of $E$ under the identity map. If $B_\tau$ is nontrivial and $b \in B_\tau$ maps the edge corresponding to $\sigma_1$ to a different edge corresponding to $\sigma_2$, then the distinct arcs $(\tau, \tau\sigma_1)$ and $(\tau,\tau \sigma_2)$ belong to the same orbital under $G_\sub$. If $b \in B_\tau$ maps the edge corresponding to $\sigma_1$ to itself, then the orbital containing $(\tau, \tau\sigma_1)$ has a smaller cardinality. These three cases are depicted in Figure~\ref{Fig:OverviewOrbital}. We conclude that the arcs $(\tau, \tau \sigma_1)$ and $(\tau, \tau \sigma_2)$ belong to the same orbital if and only if the edges corresponding to $\sigma_1$ and $\sigma_2$ in $\Coup(E)$ belong to the same orbital in $E / B_\tau$. 
\end{proof}

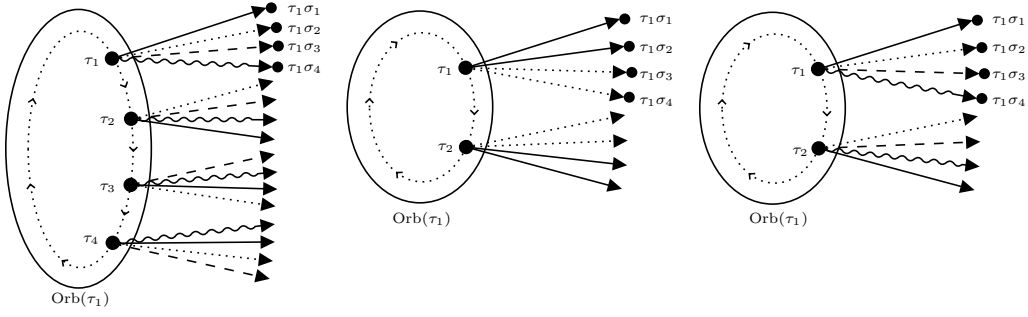
\begin{figure}
    
\centering

\tikzset{every picture/.style={line width=0.75pt}} 
\scalebox{0.8}{
\begin{tikzpicture}[x=0.75pt,y=0.75pt,yscale=-1,xscale=1]

\draw   (6.33,679.38) .. controls (6.33,631.01) and (26.66,591.81) .. (51.73,591.81) .. controls (76.81,591.81) and (97.13,631.01) .. (97.13,679.38) .. controls (97.13,727.74) and (76.81,766.94) .. (51.73,766.94) .. controls (26.66,766.94) and (6.33,727.74) .. (6.33,679.38) -- cycle ;
\draw  [fill={rgb, 255:red, 0; green, 0; blue, 0 }  ,fill opacity=1 ] (68.6,622.68) .. controls (68.6,620.47) and (70.39,618.68) .. (72.6,618.68) .. controls (74.81,618.68) and (76.6,620.47) .. (76.6,622.68) .. controls (76.6,624.88) and (74.81,626.68) .. (72.6,626.68) .. controls (70.39,626.68) and (68.6,624.88) .. (68.6,622.68) -- cycle ;
\draw  [fill={rgb, 255:red, 0; green, 0; blue, 0 }  ,fill opacity=1 ] (80.53,660.54) .. controls (80.53,658.33) and (82.32,656.54) .. (84.53,656.54) .. controls (86.74,656.54) and (88.53,658.33) .. (88.53,660.54) .. controls (88.53,662.75) and (86.74,664.54) .. (84.53,664.54) .. controls (82.32,664.54) and (80.53,662.75) .. (80.53,660.54) -- cycle ;
\draw  [fill={rgb, 255:red, 0; green, 0; blue, 0 }  ,fill opacity=1 ] (80.27,702.14) .. controls (80.27,699.93) and (82.06,698.14) .. (84.27,698.14) .. controls (86.48,698.14) and (88.27,699.93) .. (88.27,702.14) .. controls (88.27,704.35) and (86.48,706.14) .. (84.27,706.14) .. controls (82.06,706.14) and (80.27,704.35) .. (80.27,702.14) -- cycle ;
\draw  [fill={rgb, 255:red, 0; green, 0; blue, 0 }  ,fill opacity=1 ] (69.07,738.54) .. controls (69.07,736.33) and (70.86,734.54) .. (73.07,734.54) .. controls (75.28,734.54) and (77.07,736.33) .. (77.07,738.54) .. controls (77.07,740.75) and (75.28,742.54) .. (73.07,742.54) .. controls (70.86,742.54) and (69.07,740.75) .. (69.07,738.54) -- cycle ;
\draw  [dash pattern={on 0.84pt off 2.51pt}] (20.27,679.81) .. controls (20.27,638.76) and (34.92,605.48) .. (53,605.48) .. controls (71.08,605.48) and (85.73,638.76) .. (85.73,679.81) .. controls (85.73,720.86) and (71.08,754.14) .. (53,754.14) .. controls (34.92,754.14) and (20.27,720.86) .. (20.27,679.81) -- cycle ;
\draw   (19.55,704.76) -- (21.64,701.82) -- (24.5,704.02) ;
\draw   (82.23,637.15) -- (81.04,640.56) -- (77.69,639.23) ;
\draw   (88.43,678.34) -- (85.87,680.87) -- (83.43,678.21) ;
\draw   (83.3,718.87) -- (80.27,720.84) -- (78.42,717.75) ;
\draw   (42.24,753.42) -- (41.66,749.86) -- (45.24,749.42) ;
\draw  [dash pattern={on 0.84pt off 2.51pt}]  (73,622.68) -- (169.3,603.77) ;
\draw [shift={(172.24,603.2)}, rotate = 168.89] [fill={rgb, 255:red, 0; green, 0; blue, 0 }  ][line width=0.08]  [draw opacity=0] (8.93,-4.29) -- (0,0) -- (8.93,4.29) -- cycle    ;
\draw  [dash pattern={on 4.5pt off 4.5pt}]  (73,622.68) -- (170.45,615.03) ;
\draw [shift={(173.44,614.8)}, rotate = 175.51] [fill={rgb, 255:red, 0; green, 0; blue, 0 }  ][line width=0.08]  [draw opacity=0] (8.93,-4.29) -- (0,0) -- (8.93,4.29) -- cycle    ;
\draw    (73,622.68) .. controls (74.75,621.1) and (76.42,621.19) .. (77.99,622.94) .. controls (79.57,624.69) and (81.24,624.78) .. (82.99,623.21) .. controls (84.74,621.63) and (86.41,621.72) .. (87.98,623.47) .. controls (89.55,625.22) and (91.22,625.31) .. (92.97,623.74) .. controls (94.72,622.16) and (96.39,622.25) .. (97.96,624) .. controls (99.54,625.75) and (101.21,625.84) .. (102.96,624.27) .. controls (104.71,622.69) and (106.38,622.78) .. (107.95,624.53) .. controls (109.52,626.28) and (111.19,626.37) .. (112.94,624.8) .. controls (114.69,623.22) and (116.36,623.31) .. (117.94,625.06) .. controls (119.51,626.81) and (121.18,626.9) .. (122.93,625.33) .. controls (124.68,623.76) and (126.35,623.85) .. (127.92,625.6) .. controls (129.5,627.35) and (131.17,627.44) .. (132.92,625.86) .. controls (134.67,624.29) and (136.34,624.38) .. (137.91,626.13) .. controls (139.48,627.88) and (141.15,627.97) .. (142.9,626.39) .. controls (144.65,624.82) and (146.32,624.91) .. (147.89,626.66) .. controls (149.47,628.41) and (151.14,628.5) .. (152.89,626.92) .. controls (154.64,625.35) and (156.31,625.44) .. (157.88,627.19) -- (162.06,627.41) -- (170.04,627.84) ;
\draw [shift={(173.04,628)}, rotate = 183.04] [fill={rgb, 255:red, 0; green, 0; blue, 0 }  ][line width=0.08]  [draw opacity=0] (8.93,-4.29) -- (0,0) -- (8.93,4.29) -- cycle    ;
\draw    (73,622.68) -- (166.19,591.74) ;
\draw [shift={(169.04,590.8)}, rotate = 161.64] [fill={rgb, 255:red, 0; green, 0; blue, 0 }  ][line width=0.08]  [draw opacity=0] (8.93,-4.29) -- (0,0) -- (8.93,4.29) -- cycle    ;
\draw  [dash pattern={on 4.5pt off 4.5pt}]  (84.93,660.88) -- (172.47,648.8) ;
\draw [shift={(175.44,648.4)}, rotate = 172.15] [fill={rgb, 255:red, 0; green, 0; blue, 0 }  ][line width=0.08]  [draw opacity=0] (8.93,-4.29) -- (0,0) -- (8.93,4.29) -- cycle    ;
\draw    (84.93,660.88) .. controls (86.6,659.21) and (88.27,659.22) .. (89.93,660.89) .. controls (91.59,662.56) and (93.26,662.57) .. (94.93,660.91) .. controls (96.6,659.25) and (98.27,659.26) .. (99.93,660.93) .. controls (101.59,662.6) and (103.26,662.61) .. (104.93,660.95) .. controls (106.6,659.29) and (108.26,659.29) .. (109.93,660.96) .. controls (111.59,662.63) and (113.26,662.64) .. (114.93,660.98) .. controls (116.6,659.32) and (118.27,659.33) .. (119.93,661) .. controls (121.59,662.67) and (123.26,662.68) .. (124.93,661.02) .. controls (126.6,659.36) and (128.26,659.36) .. (129.93,661.03) .. controls (131.59,662.7) and (133.26,662.71) .. (134.93,661.05) .. controls (136.6,659.39) and (138.27,659.4) .. (139.93,661.07) .. controls (141.59,662.74) and (143.26,662.75) .. (144.93,661.09) .. controls (146.6,659.43) and (148.26,659.43) .. (149.93,661.1) .. controls (151.59,662.77) and (153.26,662.78) .. (154.93,661.12) .. controls (156.6,659.46) and (158.27,659.47) .. (159.93,661.14) -- (164.84,661.16) -- (172.84,661.18) ;
\draw [shift={(175.84,661.2)}, rotate = 180.2] [fill={rgb, 255:red, 0; green, 0; blue, 0 }  ][line width=0.08]  [draw opacity=0] (8.93,-4.29) -- (0,0) -- (8.93,4.29) -- cycle    ;
\draw    (84.93,660.88) -- (171.27,673.17) ;
\draw [shift={(174.24,673.6)}, rotate = 188.11] [fill={rgb, 255:red, 0; green, 0; blue, 0 }  ][line width=0.08]  [draw opacity=0] (8.93,-4.29) -- (0,0) -- (8.93,4.29) -- cycle    ;
\draw  [dash pattern={on 0.84pt off 2.51pt}]  (84.93,660.88) -- (170.55,637.58) ;
\draw [shift={(173.44,636.8)}, rotate = 164.78] [fill={rgb, 255:red, 0; green, 0; blue, 0 }  ][line width=0.08]  [draw opacity=0] (8.93,-4.29) -- (0,0) -- (8.93,4.29) -- cycle    ;
\draw  [dash pattern={on 0.84pt off 2.51pt}]  (84.27,702.14) -- (171.67,715.15) ;
\draw [shift={(174.64,715.6)}, rotate = 188.47] [fill={rgb, 255:red, 0; green, 0; blue, 0 }  ][line width=0.08]  [draw opacity=0] (8.93,-4.29) -- (0,0) -- (8.93,4.29) -- cycle    ;
\draw  [dash pattern={on 4.5pt off 4.5pt}]  (83.87,702.14) -- (171.31,683.42) ;
\draw [shift={(174.24,682.8)}, rotate = 167.92] [fill={rgb, 255:red, 0; green, 0; blue, 0 }  ][line width=0.08]  [draw opacity=0] (8.93,-4.29) -- (0,0) -- (8.93,4.29) -- cycle    ;
\draw    (83.87,702.14) .. controls (85.38,700.33) and (87.04,700.19) .. (88.85,701.7) .. controls (90.66,703.21) and (92.32,703.07) .. (93.83,701.26) .. controls (95.34,699.45) and (97,699.31) .. (98.81,700.82) .. controls (100.62,702.33) and (102.28,702.19) .. (103.79,700.38) .. controls (105.3,698.57) and (106.96,698.43) .. (108.77,699.94) .. controls (110.58,701.45) and (112.24,701.3) .. (113.75,699.49) .. controls (115.26,697.68) and (116.92,697.54) .. (118.73,699.05) .. controls (120.54,700.56) and (122.2,700.42) .. (123.71,698.61) .. controls (125.22,696.8) and (126.88,696.66) .. (128.69,698.17) .. controls (130.5,699.68) and (132.16,699.54) .. (133.67,697.73) .. controls (135.18,695.92) and (136.84,695.78) .. (138.65,697.29) .. controls (140.46,698.8) and (142.12,698.66) .. (143.63,696.85) .. controls (145.14,695.04) and (146.8,694.9) .. (148.61,696.41) .. controls (150.42,697.92) and (152.08,697.78) .. (153.59,695.97) .. controls (155.1,694.16) and (156.76,694.01) .. (158.57,695.52) .. controls (160.38,697.03) and (162.04,696.89) .. (163.55,695.08) -- (164.88,694.97) -- (172.85,694.26) ;
\draw [shift={(175.84,694)}, rotate = 174.94] [fill={rgb, 255:red, 0; green, 0; blue, 0 }  ][line width=0.08]  [draw opacity=0] (8.93,-4.29) -- (0,0) -- (8.93,4.29) -- cycle    ;
\draw    (83.87,702.14) -- (172.84,704.71) ;
\draw [shift={(175.84,704.8)}, rotate = 181.65] [fill={rgb, 255:red, 0; green, 0; blue, 0 }  ][line width=0.08]  [draw opacity=0] (8.93,-4.29) -- (0,0) -- (8.93,4.29) -- cycle    ;
\draw    (72.53,738.88) .. controls (73.99,737.03) and (75.65,736.83) .. (77.5,738.29) .. controls (79.35,739.75) and (81.01,739.55) .. (82.46,737.7) .. controls (83.92,735.85) and (85.58,735.65) .. (87.43,737.11) .. controls (89.28,738.57) and (90.94,738.37) .. (92.39,736.52) .. controls (93.85,734.67) and (95.51,734.47) .. (97.36,735.93) .. controls (99.21,737.39) and (100.87,737.19) .. (102.32,735.34) .. controls (103.78,733.49) and (105.44,733.29) .. (107.29,734.75) .. controls (109.14,736.21) and (110.8,736.01) .. (112.25,734.16) .. controls (113.71,732.31) and (115.37,732.11) .. (117.22,733.57) .. controls (119.07,735.03) and (120.73,734.83) .. (122.18,732.98) .. controls (123.64,731.13) and (125.3,730.93) .. (127.15,732.39) .. controls (129,733.85) and (130.66,733.65) .. (132.11,731.8) .. controls (133.57,729.95) and (135.23,729.75) .. (137.08,731.21) .. controls (138.93,732.67) and (140.59,732.47) .. (142.04,730.62) .. controls (143.5,728.77) and (145.16,728.57) .. (147.01,730.03) .. controls (148.86,731.49) and (150.52,731.29) .. (151.97,729.44) .. controls (153.43,727.59) and (155.09,727.39) .. (156.94,728.85) .. controls (158.79,730.31) and (160.45,730.11) .. (161.91,728.26) -- (163.32,728.09) -- (171.26,727.15) ;
\draw [shift={(174.24,726.8)}, rotate = 173.23] [fill={rgb, 255:red, 0; green, 0; blue, 0 }  ][line width=0.08]  [draw opacity=0] (8.93,-4.29) -- (0,0) -- (8.93,4.29) -- cycle    ;
\draw    (72.53,738.88) -- (171.24,738.02) ;
\draw [shift={(174.24,738)}, rotate = 179.5] [fill={rgb, 255:red, 0; green, 0; blue, 0 }  ][line width=0.08]  [draw opacity=0] (8.93,-4.29) -- (0,0) -- (8.93,4.29) -- cycle    ;
\draw  [dash pattern={on 0.84pt off 2.51pt}]  (72.53,738.88) -- (170.06,749.67) ;
\draw [shift={(173.04,750)}, rotate = 186.31] [fill={rgb, 255:red, 0; green, 0; blue, 0 }  ][line width=0.08]  [draw opacity=0] (8.93,-4.29) -- (0,0) -- (8.93,4.29) -- cycle    ;
\draw  [dash pattern={on 4.5pt off 4.5pt}]  (72.53,738.88) -- (168.11,760.53) ;
\draw [shift={(171.04,761.2)}, rotate = 192.77] [fill={rgb, 255:red, 0; green, 0; blue, 0 }  ][line width=0.08]  [draw opacity=0] (8.93,-4.29) -- (0,0) -- (8.93,4.29) -- cycle    ;
\draw   (20.3,650.19) -- (23.34,648.24) -- (25.16,651.36) ;
\draw  [fill={rgb, 255:red, 0; green, 0; blue, 0 }  ,fill opacity=1 ] (169.04,590.8) .. controls (169.04,589.17) and (170.36,587.86) .. (171.98,587.86) .. controls (173.6,587.86) and (174.92,589.17) .. (174.92,590.8) .. controls (174.92,592.42) and (173.6,593.74) .. (171.98,593.74) .. controls (170.36,593.74) and (169.04,592.42) .. (169.04,590.8) -- cycle ;
\draw   (219.33,653.1) .. controls (219.33,619.8) and (239.66,592.81) .. (264.73,592.81) .. controls (289.81,592.81) and (310.13,619.8) .. (310.13,653.1) .. controls (310.13,686.4) and (289.81,713.4) .. (264.73,713.4) .. controls (239.66,713.4) and (219.33,686.4) .. (219.33,653.1) -- cycle ;
\draw  [fill={rgb, 255:red, 0; green, 0; blue, 0 }  ,fill opacity=1 ] (289.2,628.48) .. controls (289.2,626.27) and (290.99,624.48) .. (293.2,624.48) .. controls (295.41,624.48) and (297.2,626.27) .. (297.2,628.48) .. controls (297.2,630.68) and (295.41,632.48) .. (293.2,632.48) .. controls (290.99,632.48) and (289.2,630.68) .. (289.2,628.48) -- cycle ;
\draw  [fill={rgb, 255:red, 0; green, 0; blue, 0 }  ,fill opacity=1 ] (289.53,678.34) .. controls (289.53,676.13) and (291.32,674.34) .. (293.53,674.34) .. controls (295.74,674.34) and (297.53,676.13) .. (297.53,678.34) .. controls (297.53,680.55) and (295.74,682.34) .. (293.53,682.34) .. controls (291.32,682.34) and (289.53,680.55) .. (289.53,678.34) -- cycle ;
\draw  [dash pattern={on 0.84pt off 2.51pt}] (233.27,653.34) .. controls (233.27,627.45) and (247.92,606.48) .. (266,606.48) .. controls (284.08,606.48) and (298.73,627.45) .. (298.73,653.34) .. controls (298.73,679.22) and (284.08,700.2) .. (266,700.2) .. controls (247.92,700.2) and (233.27,679.22) .. (233.27,653.34) -- cycle ;
\draw   (301.18,655.04) -- (298.72,657.67) -- (296.18,655.11) ;
\draw   (246.62,612.18) -- (250.2,612.59) -- (249.65,616.15) ;
\draw   (250.62,697.98) -- (250.32,694.38) -- (253.93,694.23) ;
\draw    (293.6,628.48) -- (389.43,615.4) ;
\draw [shift={(392.4,615)}, rotate = 172.23] [fill={rgb, 255:red, 0; green, 0; blue, 0 }  ][line width=0.08]  [draw opacity=0] (8.93,-4.29) -- (0,0) -- (8.93,4.29) -- cycle    ;
\draw  [dash pattern={on 0.84pt off 2.51pt}]  (293.6,628.48) -- (391,631.31) ;
\draw [shift={(394,631.4)}, rotate = 181.67] [fill={rgb, 255:red, 0; green, 0; blue, 0 }  ][line width=0.08]  [draw opacity=0] (8.93,-4.29) -- (0,0) -- (8.93,4.29) -- cycle    ;
\draw  [dash pattern={on 0.84pt off 2.51pt}]  (293.6,628.48) -- (389.45,646.44) ;
\draw [shift={(392.4,647)}, rotate = 190.62] [fill={rgb, 255:red, 0; green, 0; blue, 0 }  ][line width=0.08]  [draw opacity=0] (8.93,-4.29) -- (0,0) -- (8.93,4.29) -- cycle    ;
\draw    (293.6,628.48) -- (386.74,598.71) ;
\draw [shift={(389.6,597.8)}, rotate = 162.28] [fill={rgb, 255:red, 0; green, 0; blue, 0 }  ][line width=0.08]  [draw opacity=0] (8.93,-4.29) -- (0,0) -- (8.93,4.29) -- cycle    ;
\draw  [dash pattern={on 0.84pt off 2.51pt}]  (293.53,678.34) -- (391.4,674.32) ;
\draw [shift={(394.4,674.2)}, rotate = 177.65] [fill={rgb, 255:red, 0; green, 0; blue, 0 }  ][line width=0.08]  [draw opacity=0] (8.93,-4.29) -- (0,0) -- (8.93,4.29) -- cycle    ;
\draw    (293.53,678.34) -- (391.02,689.46) ;
\draw [shift={(394,689.8)}, rotate = 186.5] [fill={rgb, 255:red, 0; green, 0; blue, 0 }  ][line width=0.08]  [draw opacity=0] (8.93,-4.29) -- (0,0) -- (8.93,4.29) -- cycle    ;
\draw    (293.53,678.34) -- (387.9,703.81) ;
\draw [shift={(390.8,704.6)}, rotate = 195.1] [fill={rgb, 255:red, 0; green, 0; blue, 0 }  ][line width=0.08]  [draw opacity=0] (8.93,-4.29) -- (0,0) -- (8.93,4.29) -- cycle    ;
\draw  [dash pattern={on 0.84pt off 2.51pt}]  (293.53,678.34) -- (389.86,658.79) ;
\draw [shift={(392.8,658.2)}, rotate = 168.53] [fill={rgb, 255:red, 0; green, 0; blue, 0 }  ][line width=0.08]  [draw opacity=0] (8.93,-4.29) -- (0,0) -- (8.93,4.29) -- cycle    ;
\draw   (230.71,650.56) -- (233.26,648.01) -- (235.71,650.65) ;
\draw  [fill={rgb, 255:red, 0; green, 0; blue, 0 }  ,fill opacity=1 ] (172.24,603.2) .. controls (172.24,601.57) and (173.56,600.26) .. (175.18,600.26) .. controls (176.8,600.26) and (178.12,601.57) .. (178.12,603.2) .. controls (178.12,604.82) and (176.8,606.14) .. (175.18,606.14) .. controls (173.56,606.14) and (172.24,604.82) .. (172.24,603.2) -- cycle ;
\draw  [fill={rgb, 255:red, 0; green, 0; blue, 0 }  ,fill opacity=1 ] (173.44,614.8) .. controls (173.44,613.17) and (174.76,611.86) .. (176.38,611.86) .. controls (178,611.86) and (179.32,613.17) .. (179.32,614.8) .. controls (179.32,616.42) and (178,617.74) .. (176.38,617.74) .. controls (174.76,617.74) and (173.44,616.42) .. (173.44,614.8) -- cycle ;
\draw  [fill={rgb, 255:red, 0; green, 0; blue, 0 }  ,fill opacity=1 ] (173.04,628) .. controls (173.04,626.37) and (174.36,625.06) .. (175.98,625.06) .. controls (177.6,625.06) and (178.92,626.37) .. (178.92,628) .. controls (178.92,629.62) and (177.6,630.94) .. (175.98,630.94) .. controls (174.36,630.94) and (173.04,629.62) .. (173.04,628) -- cycle ;
\draw  [fill={rgb, 255:red, 0; green, 0; blue, 0 }  ,fill opacity=1 ] (389.6,597.8) .. controls (389.6,596.17) and (390.92,594.86) .. (392.54,594.86) .. controls (394.16,594.86) and (395.48,596.17) .. (395.48,597.8) .. controls (395.48,599.42) and (394.16,600.74) .. (392.54,600.74) .. controls (390.92,600.74) and (389.6,599.42) .. (389.6,597.8) -- cycle ;
\draw  [fill={rgb, 255:red, 0; green, 0; blue, 0 }  ,fill opacity=1 ] (392.4,615) .. controls (392.4,613.37) and (393.72,612.06) .. (395.34,612.06) .. controls (396.96,612.06) and (398.28,613.37) .. (398.28,615) .. controls (398.28,616.62) and (396.96,617.94) .. (395.34,617.94) .. controls (393.72,617.94) and (392.4,616.62) .. (392.4,615) -- cycle ;
\draw  [fill={rgb, 255:red, 0; green, 0; blue, 0 }  ,fill opacity=1 ] (394,631.4) .. controls (394,629.77) and (395.32,628.46) .. (396.94,628.46) .. controls (398.56,628.46) and (399.88,629.77) .. (399.88,631.4) .. controls (399.88,633.02) and (398.56,634.34) .. (396.94,634.34) .. controls (395.32,634.34) and (394,633.02) .. (394,631.4) -- cycle ;
\draw  [fill={rgb, 255:red, 0; green, 0; blue, 0 }  ,fill opacity=1 ] (392.4,647) .. controls (392.4,645.37) and (393.72,644.06) .. (395.34,644.06) .. controls (396.96,644.06) and (398.28,645.37) .. (398.28,647) .. controls (398.28,648.62) and (396.96,649.94) .. (395.34,649.94) .. controls (393.72,649.94) and (392.4,648.62) .. (392.4,647) -- cycle ;
\draw   (439.33,653.9) .. controls (439.33,620.6) and (459.66,593.61) .. (484.73,593.61) .. controls (509.81,593.61) and (530.13,620.6) .. (530.13,653.9) .. controls (530.13,687.2) and (509.81,714.2) .. (484.73,714.2) .. controls (459.66,714.2) and (439.33,687.2) .. (439.33,653.9) -- cycle ;
\draw  [fill={rgb, 255:red, 0; green, 0; blue, 0 }  ,fill opacity=1 ] (509.2,629.28) .. controls (509.2,627.07) and (510.99,625.28) .. (513.2,625.28) .. controls (515.41,625.28) and (517.2,627.07) .. (517.2,629.28) .. controls (517.2,631.48) and (515.41,633.28) .. (513.2,633.28) .. controls (510.99,633.28) and (509.2,631.48) .. (509.2,629.28) -- cycle ;
\draw  [fill={rgb, 255:red, 0; green, 0; blue, 0 }  ,fill opacity=1 ] (509.53,679.14) .. controls (509.53,676.93) and (511.32,675.14) .. (513.53,675.14) .. controls (515.74,675.14) and (517.53,676.93) .. (517.53,679.14) .. controls (517.53,681.35) and (515.74,683.14) .. (513.53,683.14) .. controls (511.32,683.14) and (509.53,681.35) .. (509.53,679.14) -- cycle ;
\draw  [dash pattern={on 0.84pt off 2.51pt}] (453.27,654.14) .. controls (453.27,628.25) and (467.92,607.28) .. (486,607.28) .. controls (504.08,607.28) and (518.73,628.25) .. (518.73,654.14) .. controls (518.73,680.02) and (504.08,701) .. (486,701) .. controls (467.92,701) and (453.27,680.02) .. (453.27,654.14) -- cycle ;
\draw   (521.18,655.84) -- (518.72,658.47) -- (516.18,655.91) ;
\draw   (466.62,612.98) -- (470.2,613.39) -- (469.65,616.95) ;
\draw   (470.62,698.78) -- (470.32,695.18) -- (473.93,695.03) ;
\draw  [dash pattern={on 0.84pt off 2.51pt}]  (513.6,629.28) -- (609.43,616.2) ;
\draw [shift={(612.4,615.8)}, rotate = 172.23] [fill={rgb, 255:red, 0; green, 0; blue, 0 }  ][line width=0.08]  [draw opacity=0] (8.93,-4.29) -- (0,0) -- (8.93,4.29) -- cycle    ;
\draw  [dash pattern={on 4.5pt off 4.5pt}]  (513.6,629.28) -- (611,632.11) ;
\draw [shift={(614,632.2)}, rotate = 181.67] [fill={rgb, 255:red, 0; green, 0; blue, 0 }  ][line width=0.08]  [draw opacity=0] (8.93,-4.29) -- (0,0) -- (8.93,4.29) -- cycle    ;
\draw    (513.6,629.28) .. controls (515.55,627.95) and (517.18,628.25) .. (518.51,630.2) .. controls (519.84,632.15) and (521.48,632.45) .. (523.43,631.12) .. controls (525.38,629.79) and (527.01,630.09) .. (528.34,632.04) .. controls (529.67,633.99) and (531.31,634.29) .. (533.26,632.96) .. controls (535.21,631.63) and (536.84,631.93) .. (538.17,633.88) .. controls (539.5,635.83) and (541.14,636.13) .. (543.09,634.8) .. controls (545.04,633.47) and (546.67,633.77) .. (548,635.72) .. controls (549.33,637.67) and (550.97,637.97) .. (552.92,636.64) .. controls (554.87,635.31) and (556.5,635.62) .. (557.83,637.57) .. controls (559.16,639.52) and (560.79,639.82) .. (562.74,638.49) .. controls (564.69,637.16) and (566.33,637.46) .. (567.66,639.41) .. controls (568.99,641.36) and (570.62,641.66) .. (572.57,640.33) .. controls (574.52,639) and (576.16,639.3) .. (577.49,641.25) .. controls (578.82,643.2) and (580.45,643.5) .. (582.4,642.17) .. controls (584.35,640.84) and (585.99,641.14) .. (587.32,643.09) .. controls (588.65,645.04) and (590.28,645.34) .. (592.23,644.01) .. controls (594.18,642.68) and (595.81,642.99) .. (597.14,644.94) -- (601.59,645.77) -- (609.45,647.24) ;
\draw [shift={(612.4,647.8)}, rotate = 190.62] [fill={rgb, 255:red, 0; green, 0; blue, 0 }  ][line width=0.08]  [draw opacity=0] (8.93,-4.29) -- (0,0) -- (8.93,4.29) -- cycle    ;
\draw    (513.6,629.28) -- (606.74,599.51) ;
\draw [shift={(609.6,598.6)}, rotate = 162.28] [fill={rgb, 255:red, 0; green, 0; blue, 0 }  ][line width=0.08]  [draw opacity=0] (8.93,-4.29) -- (0,0) -- (8.93,4.29) -- cycle    ;
\draw  [dash pattern={on 4.5pt off 4.5pt}]  (513.53,679.14) -- (611.4,675.12) ;
\draw [shift={(614.4,675)}, rotate = 177.65] [fill={rgb, 255:red, 0; green, 0; blue, 0 }  ][line width=0.08]  [draw opacity=0] (8.93,-4.29) -- (0,0) -- (8.93,4.29) -- cycle    ;
\draw    (513.53,679.14) .. controls (515.38,677.67) and (517.03,677.86) .. (518.5,679.71) .. controls (519.97,681.55) and (521.63,681.74) .. (523.47,680.27) .. controls (525.32,678.8) and (526.97,678.99) .. (528.44,680.84) .. controls (529.9,682.69) and (531.55,682.88) .. (533.4,681.41) .. controls (535.24,679.94) and (536.9,680.13) .. (538.37,681.97) .. controls (539.84,683.82) and (541.49,684.01) .. (543.34,682.54) .. controls (545.19,681.07) and (546.84,681.26) .. (548.31,683.11) .. controls (549.78,684.95) and (551.44,685.14) .. (553.28,683.67) .. controls (555.13,682.2) and (556.78,682.39) .. (558.24,684.24) .. controls (559.71,686.09) and (561.36,686.28) .. (563.21,684.81) .. controls (565.05,683.34) and (566.71,683.53) .. (568.18,685.37) .. controls (569.65,687.22) and (571.3,687.41) .. (573.15,685.94) .. controls (574.99,684.47) and (576.65,684.66) .. (578.12,686.5) .. controls (579.58,688.35) and (581.23,688.54) .. (583.08,687.07) .. controls (584.93,685.6) and (586.58,685.79) .. (588.05,687.64) .. controls (589.52,689.48) and (591.18,689.67) .. (593.02,688.2) .. controls (594.87,686.73) and (596.52,686.92) .. (597.99,688.77) .. controls (599.45,690.62) and (601.1,690.81) .. (602.95,689.34) -- (603.07,689.35) -- (611.02,690.26) ;
\draw [shift={(614,690.6)}, rotate = 186.5] [fill={rgb, 255:red, 0; green, 0; blue, 0 }  ][line width=0.08]  [draw opacity=0] (8.93,-4.29) -- (0,0) -- (8.93,4.29) -- cycle    ;
\draw    (513.53,679.14) -- (607.9,704.61) ;
\draw [shift={(610.8,705.4)}, rotate = 195.1] [fill={rgb, 255:red, 0; green, 0; blue, 0 }  ][line width=0.08]  [draw opacity=0] (8.93,-4.29) -- (0,0) -- (8.93,4.29) -- cycle    ;
\draw  [dash pattern={on 0.84pt off 2.51pt}]  (513.53,679.14) -- (609.86,659.59) ;
\draw [shift={(612.8,659)}, rotate = 168.53] [fill={rgb, 255:red, 0; green, 0; blue, 0 }  ][line width=0.08]  [draw opacity=0] (8.93,-4.29) -- (0,0) -- (8.93,4.29) -- cycle    ;
\draw   (450.71,651.36) -- (453.26,648.81) -- (455.71,651.45) ;
\draw  [fill={rgb, 255:red, 0; green, 0; blue, 0 }  ,fill opacity=1 ] (609.6,598.6) .. controls (609.6,596.97) and (610.92,595.66) .. (612.54,595.66) .. controls (614.16,595.66) and (615.48,596.97) .. (615.48,598.6) .. controls (615.48,600.22) and (614.16,601.54) .. (612.54,601.54) .. controls (610.92,601.54) and (609.6,600.22) .. (609.6,598.6) -- cycle ;
\draw  [fill={rgb, 255:red, 0; green, 0; blue, 0 }  ,fill opacity=1 ] (612.4,615.8) .. controls (612.4,614.17) and (613.72,612.86) .. (615.34,612.86) .. controls (616.96,612.86) and (618.28,614.17) .. (618.28,615.8) .. controls (618.28,617.42) and (616.96,618.74) .. (615.34,618.74) .. controls (613.72,618.74) and (612.4,617.42) .. (612.4,615.8) -- cycle ;
\draw  [fill={rgb, 255:red, 0; green, 0; blue, 0 }  ,fill opacity=1 ] (614,632.2) .. controls (614,630.57) and (615.32,629.26) .. (616.94,629.26) .. controls (618.56,629.26) and (619.88,630.57) .. (619.88,632.2) .. controls (619.88,633.82) and (618.56,635.14) .. (616.94,635.14) .. controls (615.32,635.14) and (614,633.82) .. (614,632.2) -- cycle ;
\draw  [fill={rgb, 255:red, 0; green, 0; blue, 0 }  ,fill opacity=1 ] (612.4,647.8) .. controls (612.4,646.17) and (613.72,644.86) .. (615.34,644.86) .. controls (616.96,644.86) and (618.28,646.17) .. (618.28,647.8) .. controls (618.28,649.42) and (616.96,650.74) .. (615.34,650.74) .. controls (613.72,650.74) and (612.4,649.42) .. (612.4,647.8) -- cycle ;

\draw (53.8,620.6) node [anchor=north west][inner sep=0.75pt]  [font=\scriptsize] [align=left] {$\displaystyle \tau _{1}$};
\draw (63.4,657.4) node [anchor=north west][inner sep=0.75pt]  [font=\scriptsize] [align=left] {$\displaystyle \tau _{2}$};
\draw (63.4,700) node [anchor=north west][inner sep=0.75pt]  [font=\scriptsize] [align=left] {$\displaystyle \tau _{3}$};
\draw (53,732.6) node [anchor=north west][inner sep=0.75pt]  [font=\scriptsize] [align=left] {$\displaystyle \tau _{4}$};
\draw (180.6,588.2) node [anchor=north west][inner sep=0.75pt]  [font=\scriptsize] [align=left] {$\displaystyle \tau _{1} \sigma _{1}$};
\draw (180.6,600.2) node [anchor=north west][inner sep=0.75pt]  [font=\scriptsize] [align=left] {$\displaystyle \tau _{1} \sigma _{2}$};
\draw (180.6,611.8) node [anchor=north west][inner sep=0.75pt]  [font=\scriptsize] [align=left] {$\displaystyle \tau _{1} \sigma _{3}$};
\draw (180.6,625) node [anchor=north west][inner sep=0.75pt]  [font=\scriptsize] [align=left] {$\displaystyle \tau _{1} \sigma _{4}$};
\draw (274,626) node [anchor=north west][inner sep=0.75pt]  [font=\scriptsize] [align=left] {$\displaystyle \tau _{1}$};
\draw (274.4,674.4) node [anchor=north west][inner sep=0.75pt]  [font=\scriptsize] [align=left] {$\displaystyle \tau _{2}$};
\draw (400.2,594) node [anchor=north west][inner sep=0.75pt]  [font=\scriptsize] [align=left] {$\displaystyle \tau _{1} \sigma _{1}$};
\draw (400.2,612.6) node [anchor=north west][inner sep=0.75pt]  [font=\scriptsize] [align=left] {$\displaystyle \tau _{1} \sigma _{2}$};
\draw (400.2,629.4) node [anchor=north west][inner sep=0.75pt]  [font=\scriptsize] [align=left] {$\displaystyle \tau _{1} \sigma _{3}$};
\draw (400.2,645.8) node [anchor=north west][inner sep=0.75pt]  [font=\scriptsize] [align=left] {$\displaystyle \tau _{1} \sigma _{4}$};
\draw (494.8,627) node [anchor=north west][inner sep=0.75pt]  [font=\scriptsize] [align=left] {$\displaystyle \tau _{1}$};
\draw (495.6,677.2) node [anchor=north west][inner sep=0.75pt]  [font=\scriptsize] [align=left] {$\displaystyle \tau _{2}$};
\draw (620.2,594.8) node [anchor=north west][inner sep=0.75pt]  [font=\scriptsize] [align=left] {$\displaystyle \tau _{1} \sigma _{1}$};
\draw (620.2,612.4) node [anchor=north west][inner sep=0.75pt]  [font=\scriptsize] [align=left] {$\displaystyle \tau _{1} \sigma _{2}$};
\draw (620.2,629.2) node [anchor=north west][inner sep=0.75pt]  [font=\scriptsize] [align=left] {$\displaystyle \tau _{1} \sigma _{3}$};
\draw (620.2,645.6) node [anchor=north west][inner sep=0.75pt]  [font=\scriptsize] [align=left] {$\displaystyle \tau _{1} \sigma _{4}$};
\draw (468.68,717.8) node [anchor=north west][inner sep=0.75pt]  [font=\scriptsize] [align=left] {Orb$\displaystyle ( \tau _{1})$};
\draw (246.08,717.8) node [anchor=north west][inner sep=0.75pt]  [font=\scriptsize] [align=left] {Orb$\displaystyle ( \tau _{1})$};
\draw (33.04,768.2) node [anchor=north west][inner sep=0.75pt]  [font=\scriptsize] [align=left] {$\displaystyle \text{Orb}( \tau _{1})$};

\end{tikzpicture}}
\caption{Graphical overview of orbital structure within a subgraph $H^k$. Each line type (solid, dotted, dashed and curled) corresponds to another orbital. Case I (left): $B_{\tau_1}$ is trivial. Case II (middle): $B_{\tau_1}$ is nontrivial and the orbital of $\sigma_1$ under $B_{\tau_1}$ contains $\sigma_2$. Case III (right): $B_{\tau_1}$ is nontrivial, but the orbital of $\sigma_1$ under $B_{\tau_1}$ only consists of $\sigma_1$. \label{Fig:OverviewOrbital}}
\end{figure}

\medskip

The following result regards the cardinality of the set of orbitals of $A^k$ under the action of $G_X$ restricted to $A^k$. By slight abuse of notation, we again denote this set by the quotient $A^k / G_X$. 
\begin{theorem} \label{Thm:QuotientAk}
The number of orbitals of $A^k$ under $G_X$ is 
$|A^k / G_X | = \mfrac{\sum_{\tau \in \mathbb{S}_n} |B_\tau| \cdot |E / B_\tau|}{2^p f! \cdot |\Aut(\Coup(E))|}$. 
\end{theorem}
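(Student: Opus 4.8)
The plan is to mirror the proof of Theorem~\ref{Thm:QuotientVk} by invoking Burnside's lemma, but now applied to the action of $G_X$ on the arc set $A^k$. Since each $A^k$ lies entirely inside one subgraph and $G_X$ acts identically on every subgraph through $G_\sub$, I would work with the action of $G_\sub$, whose order is $2^p f! \cdot |\Aut(\Coup(E))|$. Burnside's lemma then gives
\begin{align*}
|A^k/G_X| = \frac{1}{|G_\sub|}\sum_{\rho \in G_\sub}|(A^k)^\rho|,
\end{align*}
and, exactly as in Theorem~\ref{Thm:QuotientVk}, I would swap the order of counting to rewrite the numerator as $\sum_{(\tau,\tau\sigma)\in A^k}|\Stab((\tau,\tau\sigma))|$, the sum over all arcs of the size of their stabilizer under $G_\sub$.

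The key computational step is to determine the stabilizer of a single arc. An automorphism $\theta(a,b)$ fixes the ordered arc $(\tau,\tau\sigma)$ precisely when it fixes both endpoints, i.e., $a\tau b^{-1}=\tau$ and $a\tau\sigma b^{-1}=\tau\sigma$. The first equation is exactly the stabilizer condition for $\tau$ from~\eqref{Eq:stab_tau_rewritten}, forcing $b\in B_\tau$ and $a=\tau b\tau^{-1}$. Substituting this into the second equation collapses it to $b\sigma b^{-1}=\sigma$; writing $\sigma=(i~j)$ and using $b\sigma b^{-1}=(b(i)~b(j))$, this says exactly that $b$ setwise fixes the edge $e=\{i,j\}\in E$ corresponding to $\sigma$. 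Hence $|\Stab((\tau,\tau\sigma))|$ equals the number of $b\in B_\tau$ that stabilize the edge $e$ under the action of $B_\tau$ on $E$, which I denote $|\Stab_{B_\tau}(e)|$.

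With the arc stabilizer identified, I would fix $\tau$ and sum over $\sigma\in T$ (equivalently over $e\in E$), turning the inner sum into $\sum_{e\in E}|\Stab_{B_\tau}(e)|$. The final ingredient is a \emph{second}, nested application of Burnside (via the standard double-counting identity $\sum_{e\in E}|\Stab_{B_\tau}(e)|=\sum_{b\in B_\tau}|E^b|=|B_\tau|\cdot|E/B_\tau|$) to the action of the subgroup $B_\tau\le\Aut(\Coup(E))$ on $E$. Summing over all $\tau\in\mathbb{S}_n$ and dividing by $|G_\sub|=2^p f!\cdot|\Aut(\Coup(E))|$ then yields the claimed formula. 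I expect the main obstacle to be the arc-stabilizer computation, in particular correctly recognizing that the arc condition reduces to an \emph{edge}-stabilizer condition in $B_\tau$, together with cleanly setting up the nested Burnside count so that the factor $|E/B_\tau|$ emerges; Lemma~\ref{Lem:BijectionOrbital} and the orbital picture in Figure~\ref{Fig:OverviewOrbital} serve as a useful sanity check that the per-$\tau$ contribution is governed by $E/B_\tau$.
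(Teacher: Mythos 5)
Your proposal is correct, but it takes a genuinely different route from the paper. The paper's proof does not touch Burnside's lemma on the arc set at all: it groups orbitals by the orbit from which their arcs start, invokes Lemma~\ref{Lem:BijectionOrbital} to conclude that exactly $|E/B_\tau|$ orbitals emanate from $\Orb(\tau)$, and then uses the orbit--stabilizer identity~\eqref{Eq:OrbitStabTheorem} together with Lemma~\ref{Lem:conjugatesubgroups} to convert the resulting sum over orbits into the stated sum over all $\tau\in\mathbb{S}_n$. You instead apply Burnside directly to the action on $A^k$, compute the stabilizer of an ordered arc $(\tau,\tau\sigma)$ explicitly --- correctly reducing the two endpoint conditions to $b\in B_\tau$ with $b(\{i,j\})=\{i,j\}$ --- and then run a nested Burnside/double count of $B_\tau$ acting on $E$ to produce the factor $|B_\tau|\cdot|E/B_\tau|$; all of these steps check out, including the use of $|G_\sub|=2^pf!\cdot|\Aut(\Coup(E))|$ in the denominator. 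The trade-off: the paper's argument is shorter because Lemma~\ref{Lem:BijectionOrbital} is already in place and it additionally exposes the structural fact (each orbit contributes exactly $|E/B_\tau|$ orbitals) that is reused later to build the representation $\mathcal{R}(A^k/G_X)$; your argument is more self-contained --- it needs neither Lemma~\ref{Lem:BijectionOrbital} nor Lemma~\ref{Lem:conjugatesubgroups}, since your arc-stabilizer computation effectively reproves the content of the former and the independence of the count from the orbit representative is automatic in a sum over all arcs.
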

\begin{proof}
Since the arcs belonging to an orbital all start from vertices in the same orbit, it suffices to enumerate over all orbits and count the number of orbitals starting from that orbit. It follows from Lemma~\ref{Lem:BijectionOrbital} that the number of distinct orbitals starting from $\Orb(\tau)$ is $|E / B_\tau|$, where the choice of $\tau$ to represent $\Orb(\tau)$ does not affect this quantity, see Lemma~\ref{Lem:conjugatesubgroups}.
We now sum over all $\Orb(\tau) \in V^k / G_X$: 
\begin{align*}
    |A^k / G_X| & = \sum_{\Orb(\tau) \in V^k / G_X} |E/ B_\tau | \\
    & = \sum_{\Orb(\tau) \in V^k / G_X} \frac{2^p f! \cdot |\Aut(\Coup(E))|}{|B_\tau|} \cdot  \frac{|B_\tau| \cdot |E/ B_\tau |}{2^p f! \cdot |\Aut(\Coup(E))|}   \\
    & = \sum_{\Orb(\tau) \in V^k / G_X} |\Orb(\tau)| \cdot \frac{|B_\tau| \cdot |E/ B_\tau |}{2^p f! \cdot |\Aut(\Coup(E))|}   \\
    & =  \frac{\sum_{\tau \in \mathbb{S}_n} |B_\tau| \cdot |E / B_\tau|}{2^p f! \cdot |\Aut(\Coup(E))|} .
\end{align*}
In the third equality we used \eqref{Eq:OrbitStabTheorem}, as well as the fact that the sum of $|\Orb(\tau)| \cdot |B_\tau| \cdot |E / B_\tau|$ over all orbits equals the sum of $|B_\tau| \cdot |E / B_\tau|$ over all vertices, since $|B_\tau| \cdot |E / B_\tau|$ is constant for all $\tau$ within an orbit, see Lemma~\ref{Lem:conjugatesubgroups}. 
\end{proof}

To study the orbital representation of $D^k$ under the action of $G_X$, we distinguish between the case $k = 0$ and $k \in [m]$. For $k = 0$, $D^k$ contains all arcs between $s$ and $V^1$. Therefore, each orbital of $D^0$ under $G_X$ consists of all arcs starting from $s$ and ending at vertices in an orbit of $V^1$. The arcs in $D^k$, $k \in [m]$, correspond to ordered pairs $(\tau^k, \tau^{k+1})$, where~$\tau$ represents the same qubit order in $H^k$ and $H^{k+1}$. Such an arc exists in $D^k$ whenever~$\tau^k \in F^k$, see \eqref{Def:Fk}. The orbital of $(\tau^k, \tau^{k+1})$ is the set
\begin{align*}
    \Orb((\tau^k, \tau^{k+1})) & = \{ (\rho(\tau^k), \rho(\tau^{k+1}) \, : \, \, \rho \in G_\sub \} \\
    & = \{ (a \tau^k b^{-1}, a \tau^{k+1} b^{-1} ) \, : \, \, a \in \mathbb{S}_n(\mathcal{F}), b \in \Aut(\Coup(E)) \}. 
\end{align*}
Let $D^k / G_X$ denote the set of orbitals of the group action of $G_X$ restricted to $D^k$. Since~$\tau^k$ and $\tau^{k+1}$ represent the same qubit orders in $H^k$ and $H^{k+1}$, respectively, all arcs within $\Orb((\tau^k, \tau^{k+1}))$ start and end at vertices in the same orbit. This leads to the following result.

\begin{theorem} \label{Thm:QuotientDk} The number of orbitals of $D^0$ under $G_X$ is $|D^0 / G_X |$ $= \mfrac{\sum_{\tau \in \mathbb{S}_n} |B_\tau|}{2^p f! \cdot |\Aut(\Coup(E))|}$. For $k \neq 0$, the number of orbitals of $D^k$ under $G_X$ is
$|D^k / G_X |$ $= \mfrac{\sum_{\tau \in F^k} |B_\tau|}{2^p f! \cdot |\Aut(\Coup(E))|}$. 
\end{theorem}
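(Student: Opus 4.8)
The plan is to exploit the fact that, in contrast with the within-layer arcs counted in Theorem~\ref{Thm:QuotientAk}, each connecting arc in $D^k$ is completely determined by a single one of its endpoints, so that the orbital count collapses to an orbit count of a vertex set. From a vertex $\tau^k$ with $\tau \in F^k$ there is exactly \emph{one} outgoing arc in $D^k$ (namely to the identical qubit order $\tau^{k+1}$, or to $t$ when $k=m$), whereas in $A^k$ a vertex emits one arc per generator in $T$. Because of this absence of ``fan-out'', the multiplicative factor $|E/B_\tau|$ that appeared in Theorem~\ref{Thm:QuotientAk} disappears, and we are left with $|B_\tau|$ alone.

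For $D^0$, first I would note that every arc has the form $(s,\tau^1)$ and that $s$ is fixed by all of $G_X$ since $\Orb(s)=\{s\}$. Hence each $\rho \in G_X$ fixes $s$ and acts on $V^1$ through its common restriction in $G_\sub$, so that $(s,\tau^1)$ is sent to $(s,\rho(\tau^1))$. Two such arcs lie in the same orbital exactly when their heads lie in the same orbit of $V^1$ under $G_X$, so the map sending the orbital of $(s,\tau^1)$ to $\Orb(\tau^1)$ is a bijection $D^0/G_X \to V^1/G_X$. Thus $|D^0/G_X| = |V^1/G_X|$, and the claimed formula is immediate from Theorem~\ref{Thm:QuotientVk}.

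For $k\neq 0$, I would use that $G_\sub \subseteq G^k_\sub = \Aut(\Cay(\mathbb{S}_n,T),F^k)$ setwise fixes $F^k$, so $G_\sub$ acts on $F^k$; moreover (for $k<m$) it acts on the heads $\tau^{k+1}$ through the identical qubit order, while (for $k=m$) the head $t$ is fixed. In either case $\tau \mapsto (\tau^k,\tau^{k+1})$ (resp.\ $\tau\mapsto(\tau^m,t)$) is a $G_\sub$-equivariant bijection from $F^k$ onto the arc set of $D^k$, whence $|D^k/G_X| = |F^k/G_X|$, the number of orbits of $F^k$ under $G_\sub$. I would then evaluate this orbit count by Burnside's lemma in its double-counting form, giving
\begin{align*}
|F^k/G_X| = \frac{1}{|G_\sub|}\sum_{\tau\in F^k}|\Stab(\tau)| = \frac{\sum_{\tau\in F^k}|B_\tau|}{2^p f!\cdot|\Aut(\Coup(E))|},
\end{align*}
using $|\Stab(\tau)|=|B_\tau|$ from~\eqref{Eq:stab_tau_rewritten} and $|G_\sub| = |\mathbb{S}_n(\mathcal{F})|\cdot|\Aut(\Coup(E))| = 2^p f!\cdot|\Aut(\Coup(E))|$.

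The one point needing care — and the main obstacle — is confirming that this arc-orbital/vertex-orbit correspondence is a genuine $G_X$-equivariant bijection: I must verify that $G_\sub$ really stabilizes $F^k$ setwise, so that heads remain valid endpoints and no orbital is spuriously split or merged, and that restricting the $G_\sub$-action from all of $V^k$ to the subset $F^k$ leaves the stabilizer of each $\tau\in F^k$ unchanged. The latter is clear, since $\Stab(\tau)$ is defined as a subgroup of $G_\sub$ independently of any ambient set; consequently the Burnside sum over $F^k$ reuses exactly the stabilizer sizes $|B_\tau|$ from Theorem~\ref{Thm:QuotientVk}, only with the summation range narrowed from $\mathbb{S}_n$ to $F^k$.
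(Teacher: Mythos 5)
Your proposal is correct and follows essentially the same route as the paper: identify the orbitals of $D^0$ with the orbits of $V^1$ and the orbitals of $D^k$ with the orbits of $F^k$ under $G_\sub$, then evaluate the latter count via Burnside's lemma exactly as in Theorem~\ref{Thm:QuotientVk}, with the summation restricted to $F^k$. Your explicit verification that $G_\sub$ setwise stabilizes $F^k$ and that the arc--vertex correspondence is $G_\sub$-equivariant is a detail the paper leaves implicit, but the argument is the same.
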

\begin{proof}
The first part follows directly from Theorem~\ref{Thm:QuotientVk}. For the second part, observe that we have~$D^k = \{(\tau^k, \tau^{k+1}) \, : \, \, \tau^k \in V^k,~ \tau^{k+1} \in V^{k+1},~ \tau^k \in F^k\}$, where $F^k$ is defined in~\eqref{Def:Fk}. The cardinality of $D^k / G_X$ is equal to the number of orbits of $F^k$ under the action of $G_X$ restricted to the vertices in $F^k$. The cardinality of $F^k / G_X$ can be derived similarly as in the proof of Theorem~\ref{Thm:QuotientVk}, leading to
\begin{align*}
   |D^k / G_X| = |F^k / G_X| = \frac{\sum_{\tau \in F^k} |B_\tau|}{2^p f! \cdot |\Aut(\Coup(E))|}. 
\end{align*}
\end{proof}

The results of Theorems~\ref{Thm:QuotientVk},~\ref{Thm:QuotientAk} and~\ref{Thm:QuotientDk} are summarized in Table~\ref{Table:quotients}. Moreover, we simplify the cardinalities of the quotients for the special case where $B_\tau$ is trivial for all~$\tau \in \mathbb{S}_n$.      

\begin{table}[H]
\footnotesize
\centering
\setlength{\tabcolsep}{20pt}
\begin{tabular}{@{}ccc@{}}
\toprule
\textbf{Quotient} & \textbf{Order}                                                                                   & \textbf{\begin{tabular}[c]{@{}c@{}}Order when \boldmath $B_\tau$ is \\ trivial for all \boldmath $\tau \in \mathbb{S}_n$\end{tabular}} \\ \midrule
$V^k / G_X$, $k \in [m]$             & $\dfrac{\sum_{\tau \in \mathbb{S}_n} |B_\tau|}{2^p f! \cdot |\Aut(\Coup(E))|}$                    &     $\dfrac{n!}{2^p f! \cdot |\Aut(\Coup(E))|}$                          \\[1.5em]
$A^k / G_X$, $k \in [m]$             & $\dfrac{\sum_{\tau \in \mathbb{S}_n} |B_\tau| \cdot |E / B_\tau|}{2^p f! \cdot |\Aut(\Coup(E))|}$ &    $\dfrac{n! \cdot |E|}{2^p f! \cdot |\Aut(\Coup(E))|}$                                                                                                                 \\[1.5em]
$D^0 / G_X$             & $\dfrac{\sum_{\tau \in \mathbb{S}_n} |B_\tau|}{2^p f! \cdot |\Aut(\Coup(E))|}$                             &              $\dfrac{n!}{2^p f! \cdot |\Aut(\Coup(E))|}$                                                                                                      \\[1.5em]

$D^k / G_X$, $k \in [m]$             & $\dfrac{\sum_{\tau \in F^k} |B_\tau|}{2^p f! \cdot |\Aut(\Coup(E))|}$                             &              $\dfrac{2 |E| (n-2)!}{2^p f! \cdot |\Aut(\Coup(E))|}$                                                                                                      \\[1.5em] \bottomrule
\end{tabular}
\caption{Overview of the orders of quotients $V^k / G_X$, $A^k / G_X$ and $D^k / G_X$ in terms of the cardinality of $B_\tau$. \label{Table:quotients}}
\end{table} 

\noindent In practical situations, it is often appropriate to possess an orbit (resp.\ orbital) representation of some set under a group action. Such representation contains exactly one element from each orbit (resp.\ orbital). In the sequel, we let $\mathcal{R}(V^k / G_X) \subseteq V^k$ denote an orbit representation of $\mathbb{S}_n$ under the group action $G_\sub$. We can obtain $\mathcal{R}(V^k / G_X)$ by exploiting~\eqref{Eq:OrbitCosets}. First, one can efficiently obtain a representation of left cosets of $\Aut(\Coup(E))$ in $\mathbb{S}_n$, see e.g., Dixon and Majeed~\cite{DixonMajeed}. This coset representation can be compressed to an orbit representation by a merge operation of multiple left cosets. For each representative $\tau$, we enumerate the elements of the right coset of $\mathbb{S}_n(\mathcal{F})$ containing $\tau$. This provides the representatives of left cosets that belong to the same orbit. 

An orbital representation $\mathcal{R}(A^k / G_X)$ can be obtained by exploiting the proof of Theorem~\ref{Thm:QuotientAk}. We know that each orbital can be represented by the orbit from where the arcs in the orbital start, combined with a representative element from the quotient $E / B_\tau$, where $\tau$ belongs to the orbit. Hence, $\mathcal{R}(A^k / G_X) = \{(\tau, \sigma) \, : \, \, \tau \in \mathcal{R}(V^k / G_X),~ \sigma \in \mathcal{R}(E / B_\tau)\}$, where $\mathcal{R}(E / B_\tau)$ is an orbital representation of the edges in $E$ under the action of $B_\tau$. As the coupling graph is typically small, $\mathcal{R}(E / B_\tau)$ can be obtained by enumeration. 

Finally the orbital representation $\mathcal{R}(D^k / G_X)$ follows from the subset of $\mathcal{R}(V^k / G_X)$ associated with the orbits in the set $F^k$, i.e., $\mathcal{R}(D^k / G_X) = \{(\tau^k, \tau^{k+1}) \, :  \, \, \tau^k \in \mathcal{R}(V^k / G_X),~ \tau^k \in F^k \}$. 

The orbit and orbital representations can be found more easily when the underlying coupling graph is known, see Section~\ref{Sec:SpecialCoupling}.

\section{Symmetry reduction for the NNCP}
\label{Sec:SymmetryReduction}
In this section we show how the automorphism results derived in Section~\ref{Sec:ExploitSym} can be exploited to reduce the size of the NNCP introduced in Section~\ref{Subsec:ShortestPath}.

In Section~\ref{Subsec:reducedLP} we exploit the subgroup $G_X$, see~\eqref{Eq:GXreformulated}, in order to reduce the linear programming formulation \eqref{Prob:SPP} in terms of the number of variables and constraints. In Section~\ref{Subsec:reducedSPP} we show how this reduced LP can be rewritten as a generalized network flow problem. The backward reconstruction of optimal qubit orders from the reduced model is the topic of Section~\ref{Subsec:BackwardReconstruction}, where we also provide a pseudo-code on the construction of the symmetry-reduced NNCP formulation. {\color{black} The relationship with a dynamic programming algorithm is the topic of Section~\ref{Subsec:DP}.}

\subsection{Reduced LP formulation}
\label{Subsec:reducedLP}
The elements in $G_X$ act on the vertex and arc set of~$G$. For any arc $e \in A$ and any $\rho \in G_X$, let $\rho(e)$ denote the ordered pair to which $e$ is mapped to by $\rho$, which is again in $A$ since $\rho$ is an automorphism.
Now, let $x \in \prod_{k = 1}^m \mathbb{R}^{A^k}$ and $y \in \prod_{k = 0}^m \mathbb{R}^{D^k}$ be feasible for~\eqref{Prob:SPP}. We define the {Reynolds operator} $\psi$ that maps $x$ (resp.~$y$) to the average of the images of $x$ (resp.~$y$) under the action of $G_X$ on $A$. That is,
\begin{align} \label{Def:Reynolds}
\psi(x) := \frac{1}{|G_X|} \sum_{\rho \in G_X} x^\rho \quad \text{and} \quad \psi(y) := \frac{1}{|G_X|} \sum_{\rho \in G_X}y^\rho,
\end{align}
where $x^\rho$ and $y^\rho$ are defined as $x^\rho_e = x_{\rho(e)}$ and $y^\rho_e = y_{\rho(e)}$ for all arcs $e$.
As $A^k$ for all~${k \in [m]}$ and~$D^k$ for all~$k \in \{0\} \cup [m]$ are invariant under the action of $G_X$ on $A$, it follows that~${\psi(x) \in \prod_{k = 1}^m \mathbb{R}^{A^k}}$ and $\psi(y) \in \prod_{k = 0}^m \mathbb{R}^{D^k}$. We now prove the following result, {\color{black}which is a special case of a general result on symmetries of LPs due to B\"odi et al.~\cite{BodiEtAl}, see also Gatermann and Parrilo~\cite{GatermannParrilo}.}

\begin{theorem} \label{Thm:Reynolds}
Let $(x,y) \in \prod_{k = 1}^m \mathbb{R}^{A^k} \times \prod_{k = 0}^m \mathbb{R}^{D^k}$ be feasible (resp.~optimal) for \eqref{Prob:SPP}. Then, $(\psi(x), \psi(y))$ is also feasible (resp.~optimal) for~\eqref{Prob:SPP}. 
\end{theorem}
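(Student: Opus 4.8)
The plan is to reduce everything to a single fact: each $\rho \in G_X$ acts as a linear bijection on the solution space that maps the feasible region of~\eqref{Prob:SPP} onto itself while preserving the objective value. Once this is established, $(\psi(x),\psi(y))$ is simply an average of feasible (resp.\ optimal) points, and the conclusion follows from convexity of the feasible region together with linearity of the objective.

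First I would fix a single $\rho \in G_X$ and show that $(x^\rho, y^\rho)$ is feasible whenever $(x,y)$ is. The box constraints $0 \leq x_e \leq 1$ and $0 \leq y_e \leq 1$ are immediate: by~\eqref{Def:G_X} every $\rho$ has the block form $(\id_{\{s\}}, \rho', \ldots, \rho', \id_{\{t\}})$ and hence leaves each arc set $A^k$ and each $D^k$ invariant, so $\rho(e)$ lies in the same block as $e$ and the entries of $x^\rho, y^\rho$ are merely a relabelling of those of $x, y$. For the two normalization constraints I would use that $\rho$ fixes $s$ and $t$ (since $\Orb(s) = \{s\}$ and $\Orb(t) = \{t\}$) and therefore permutes $D^0$ and $D^m$ among themselves, giving $\sum_{e \in D^0} y^\rho_e = \sum_{e \in D^0} y_e = 1$, and likewise for $D^m$.

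The core step is flow conservation. Here I would exploit that $\rho$ is a graph automorphism of $X$, so it maps the arcs leaving (resp.\ entering) a vertex $\tau$ bijectively onto the arcs leaving (resp.\ entering) $\rho(\tau)$, while respecting the partition into $A^k$, $D^{k-1}$ and $D^k$. Writing out, for instance, $\sum_{e \in \delta^+(\tau, A^k)} x^\rho_e = \sum_{e \in \delta^+(\tau, A^k)} x_{\rho(e)} = \sum_{e' \in \delta^+(\rho(\tau), A^k)} x_{e'}$, and treating the three remaining sums identically, the conservation equation for $(x^\rho, y^\rho)$ at $\tau$ becomes exactly the conservation equation for $(x,y)$ at $\rho(\tau)$, which holds by assumption. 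As $\tau$ ranges over $V^k$ so does $\rho(\tau)$, hence all conservation constraints are satisfied and $(x^\rho, y^\rho)$ is feasible. The same relabelling yields $\sum_{e \in A^k} x^\rho_e = \sum_{e \in A^k} x_e$ for every $k$, so the objective is $G_X$-invariant, and $(x^\rho, y^\rho)$ is optimal precisely when $(x,y)$ is.

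Finally I would assemble the averages. Since $(\psi(x), \psi(y)) = \frac{1}{|G_X|}\sum_{\rho \in G_X}(x^\rho, y^\rho)$ is a convex combination of the feasible points just produced, and the feasible region of~\eqref{Prob:SPP} is a polyhedron and hence convex, $(\psi(x),\psi(y))$ is feasible. For optimality, linearity of the objective together with its $G_X$-invariance gives that $(\psi(x),\psi(y))$ attains the same value as $(x,y)$, so an optimal input yields an optimal Reynolds point. I expect the only delicate point to be the bookkeeping in the flow-conservation step — specifically confirming that $G_X$ leaves each block $A^k$ and $D^k$ invariant and matches in-arcs to in-arcs and out-arcs to out-arcs — but this is exactly guaranteed by the block structure of $G_X$ in~\eqref{Def:G_X} together with the fact that its elements are automorphisms of $X$.
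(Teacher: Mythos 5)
Your proposal is correct and follows essentially the same route as the paper's proof: show that each $(x^\rho, y^\rho)$ is feasible because $\rho$ preserves the arc structure of $X$, observe that $(\psi(x),\psi(y))$ is a convex combination of these feasible points, and use invariance of the linear objective under $G_X$ to transfer optimality. You simply spell out in more detail the relabelling bookkeeping that the paper leaves implicit.
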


\begin{proof}
As the flow conservation constraints hold for $(x,y)$ and $\rho$ preserves the arc structure of $X$, the pair $(x^\rho, y^\rho)$ also satisfies these constraints for all $\rho \in G_X$. It follows that $(x^\rho, y^\rho)$ is feasible for \eqref{Prob:SPP} for all $\rho \in G_X$. Observe that the pair $(\psi(x),\psi(y))$ is a convex combination of $(x^\rho, y^\rho)$ over the elements of $G_X$. Because the feasible set of \eqref{Prob:SPP} is convex, it follows that $(\psi(x), \psi(y))$ is also feasible for \eqref{Prob:SPP}. 

The objective function of \eqref{Prob:SPP} can be written as $f(x,y) := \sum_{e \in A}x_e$. Since arcs are mapped to arcs by all $\rho \in G_X$, we have $f(x^\rho, y^\rho) = f(x,y)$. We then obtain:
\begin{align*}
    f(\psi(x),\psi(y)) = \sum_{e \in A}\psi(x)_e =  \frac{1}{|G_X|} \sum_{\rho \in G_X} \sum_{e \in A} x^\rho_e = \frac{1}{|G_X|} |G_X| \sum_{e \in A} x_e = f(x,y). 
\end{align*}
Thus, if $(x,y)$ is optimal for \eqref{Prob:SPP}, then so is $(\psi(x),\psi(y))$. 
\end{proof}

An implication of Theorem~\ref{Thm:Reynolds} is that we may restrict the feasible set of \eqref{Prob:SPP} to the subspace 
\begin{align} \label{Def:H_G_X}
\mathcal{H}_{G_X} := \left\{ (\psi(x),\psi(y)) \, : \, \, (x,y) \in \prod_{k = 1}^m \mathbb{R}^{A^k} \times \prod_{k = 0}^m \mathbb{R}^{D^k} \right\},
\end{align}
which is also denoted as the fixed point subspace in~\cite{BodiEtAl}. By construction of the Reynolds operator~\eqref{Def:Reynolds}, the entries in $\psi(x)$ belonging to the same orbital are equal. Therefore, the subspace $\mathcal{H}_{G_X}$ is spanned by the incidence vectors of orbitals of $X$. In Section~\ref{Subsec:Orbit} we derived the orbital structure of the action of $G_X$ on $X$. Recall that $A^k / G_X$ denotes (the index set of) the collection of orbitals of $A^k$ under the action of $G_X$. Now, if we denote the $i$th orbital of $A^k$ by $W^k_i$, we obtain
\begin{align}
A^k &= \bigsqcup_{i \in A^k / G_X} W^k_i \quad \text{for all } k \in [m],
\intertext{{\color{black}where $\sqcup$ denotes the disjoint union of sets.} In a similar fashion, the arc sets $D^k, k \in \{0\}\cup [m]$ can be partitioned into its collection of orbitals. If $Z^k_i$ denotes the $i$th orbital of $D^k$, then}
D^k &= \bigsqcup_{i \in D^k / G_X} Z^k_i \quad \text{for all } k \in \{0\} \cup [m].
\end{align}
Now, the subspace $\mathcal{H}_{G_X}$ can be rewritten as: 
\begin{align} 
\mathcal{H}_{G_X} =  \prod_{k = 1}^m \left( \Span\{ \mathbbm{1}_{W^k_i} \, : \, \, i \in A^k / G_X\} \right) \times \prod_{k = 0}^m \left(\Span\{ \mathbbm{1}_{Z^k_i} \, : \, \, i \in D^k / G_X\} \right),
\end{align}
which implies that the characteristic vectors of the orbitals form a basis for $\mathcal{H}_{G_X}$. 

Also the orbits of each of the vertex sets $V^k$ under the action of $G_X$ induce a partition of $V^k$. Let $V^k / G_X$ denote (the index set of) the collection of orbits of $V^k$ under $G_X$. The $u$th orbit of $V^k$ is denoted by $O^k_u$, with $u \in V^k / G_X$. Then,
\begin{align}
V^k = \bigsqcup_{u \in V^k / G_X} O^k_u \quad \forall k \in [m].
\end{align} 
To write the symmetry-reduced equivalent of~\eqref{Prob:SPP} explicitly, we need some further terminology. Let the out-degree $d^+(\tau, W^k_i)$ (resp.\ in-degree $d^-(\tau, W^k_i)$) denote the number of arcs in orbital $W^k_i$ that start (resp.\ end) at vertex $\tau$, i.e.,
\begin{align*}
d^+(\tau, W^k_i) :=  \left\lvert \left\{ (\tau, \tau \sigma) \in W^k_i \, : \, \, \sigma \in T \right\}\right\rvert \quad \text{and} \quad d^-(\tau, W^k_i) & := \left\lvert \left\{ (\tau \sigma, \tau) \in W^k_i \, : \, \,  \sigma \in T\right\}\right\rvert, 
\end{align*}
for all $i \in A^k / G_X$ and $k \in [m]$. 
Since $d^+(\tau_1, W_i^k) = d^+(\tau_2, W_i^k)$ for all orbitals $i$ when~$\tau_1$ and~$\tau_2$ belong to the same orbit, it makes sense to define $d^+(W^k_i)$  ($:= d^+(\tau, W_i^k )$ for any~$(\tau, \tau \sigma) \in W^i_k$) as the orbital out-degree in $W^k_i$. In a similar fashion we define $d^-(W^k_i)$.

From Lemma~\ref{Lem:BijectionOrbital} we know that there is a single case in which~$d^+(\tau, W^k_i) > 1$. Namely, two distinct arcs $(\tau, \tau\sigma_1)$ and $(\tau, \tau\sigma_2)$ with $\sigma_1 = (i~j)$ are both in the same orbital~$W^k_i$ if and only if there exists a $b \in B_\tau$ such that $\sigma_2 = (b(i)~b(j))$. This corresponds to case II in Figure~\ref{Fig:OverviewOrbital}. Hence, we have
\begin{align*}
    d^+(\tau, W^k_i) &= \left \lvert \left\{b(\{i,j\}) \, : \, \, b \in B_\tau\right \} \right\rvert \text{ for some } (\tau, \tau (i~j)) \in W^k_i, \\
    d^-(\tau, W^k_i) &= \left \lvert \left \{b(\{i,j\}) \, : \, \, b \in B_\tau\right \} \right\rvert \text{ for some } (\tau(i~j), \tau ) \in W^k_i. 
\end{align*}
Indeed, these equal the number of elements in an orbital of $\Coup(E)$ under the action of~$B_\tau$. Moreover, we also define $d^+(Z^0_i)$ (\mbox{resp.\ $d^-( Z^m_i)$}) as the number of arcs in orbital~$Z^0_i$ (\mbox{resp.\ $Z^m_i$}) starting from $s$ (resp.\ ending at $t$). For these degrees one can verify that~${d^+(Z^0_i) = |Z^0_i|}$ and $d^-(Z^m_i) = |Z^m_i|$.

For any vertex $\tau$, we let $\delta^+(\tau, A^k / G_X)$ (resp.\ $\delta^-(\tau, A^k / G_X)$) denote the set of orbitals that contain an arc starting (resp.\ ending) at vertex $\tau$. That is,
\begin{align*}
\delta^+(\tau, A^k / G_X) & := \left\{ i \in A^k / G_X \, : \, \, (\tau, \tau \sigma) \in W^k_i \text{ for some } \sigma \in T\right\}, \\
\delta^-(\tau, A^k / G_X) & := \left\{ i \in A^k / G_X \, : \, \, (\tau \sigma , \tau) \in W^k_i \text{ for some } \sigma \in T\right\}. 
\end{align*}
Similar definitions hold for $\delta^+(\tau, D^k / G_X)$ and $\delta^-(\tau, D^k / G_X)$. Again, observe that if~$\tau_1$ and~$\tau_2$ belong to the same orbit $O^k_u$, then $\delta^+(\tau_1, A^k/G_X) = \delta^+(\tau_2, A^k/G_X)$. 
For that reason, it makes sense to define $\delta^+(O^k_u , A^k/G_X)$, which is equal to $\delta^+(\tau, A^k/G_X)$ for any $\tau \in O^k_u$. 
In a similar fashion, we define $\delta^-(O^k_u , A^k/G_X)$, $\delta^+(O^k_u , D^k/G_X)$ and~$\delta^-(O^k_u , D^k/G_X)$ for all $u \in V^k / G_X$ and $k \in [m]$. 

\medskip

The symmetry reduced equivalent formulation of \eqref{Prob:SPP} is obtained by replacing every variable~$x_e$ in $H^k$ by a variable $\lambda^k_i$ corresponding to the orbital $W^k_i$ to which arc $e$ belongs. Similarly, we replace every variable $y_e$ in $D^k$ by a variable $\theta^k_i$ corresponding to the orbital~$Z^k_i$ to which arc $e$ belongs. As a consequence, the flow conservation constraint corresponding to vertices that belong to the same orbit becomes equivalent, hence we only keep one per orbit. The remaining linear programming problem we denote by \eqref{Prob:RSPP} and is given by
\leqnomode
\begin{align}  \tag{RSPP}\label{Prob:RSPP} \qquad\quad  \begin{aligned}
\min \quad & \sum_{k =1}^m \sum_{i \in A^k / G_X}|W^k_i|\lambda^k_i  \\
\text{s.t.}\quad & \sum_{i \in D^0 / G_X} d^+(Z^0_i)\theta^0_i = 1,~ \sum_{i \in D^m / G_X} d^-( Z^m_i)\theta^m_i = 1 \\
& \begin{aligned} & \sum_{\substack{i \in \delta^-(O^k_u, \\ D^{k-1}/G_X)}}\theta^{k-1}_i + \sum_{\substack{i \in \delta^-(O^k_u, \\ A^k /G_X)}}d^-(W^k_i)\lambda_i^k  =&& \\
 & \qquad \quad \sum_{\substack{i \in \delta^+(O^k_u, \\ D^k / G_X)}}\theta^{k}_i + \sum_{\substack{i \in \delta^+(O^k_u,\\ A^k / G_X)}}d^+( W^k_i)\lambda_i^k  \qquad \forall u \in V^k / G_X, ~k \in [m] \end{aligned}\\
& 0 \leq \lambda^k_i \leq 1 \quad \forall i \in A^k / G_X,~ k \in [m] \\
& 0 \leq \theta^k_i \leq 1 \quad \forall i \in D^k / G_X,~ k \in \{0\} \cup [m]. \end{aligned} \hspace{-1cm}
\end{align}
Observe that $|W^k_i|, d^+(Z^0_i)$ and $d^-(Z^m_i)$ for all appropriate $k$ and $i$ are proportional to the size of an orbit in one of the subgraphs, which is in turn proportional to $|\Aut(\Coup(E))|$, see~\eqref{Eq:OrbitStabTheorem}. For highly symmetric coupling graphs, the order of this automorphism group becomes very large, leading to extreme coefficient values in~\eqref{Prob:RSPP}. This may lead to numerical instability when solving such program.

To improve practical performance, we apply a scaling operation prior to solving the program. We first multiply both sides of the flow conservation constraints by $|\Aut(\Coup(E))|$ for all $u \in V^k / G_X$ and $k \in [m]$. After that, we apply the following substitution:
\begin{align*}
    \overline{\lambda}_i^k &:= |\Aut(\Coup(E))| \lambda_i^k \qquad \text{for all } i \in A^k / G_X, k \in [m], \\
    \overline{\theta}_i^k &:= |\Aut(\Coup(E))| \theta_i^k \qquad \text{for all } i \in D^k / G_X, k \in \{0\} \cup [m]. 
\end{align*}
This leads to the equivalent linear program~\eqref{Prob:RSPP_scale}. Observe that the new upper bounds on $\overline{\lambda}_i^k$ and $\overline{\theta}_i^k$ are omitted in this program. Indeed, the out-degree of $s$ and in-degree of $t$ needs to be 1, which implicitly enforces an upper bound of 1 on all $\overline{\theta}_i^0$ and $\overline{\theta}_i^m$. Since all variables and coefficient values are nonnegative and flow conservation holds throughout the program, 
we can without loss of generality omit the upper bounds on the variables. Hence, the coefficients of this program no longer depend on $|\Aut(\Coup(E))|$.
\begin{align}  \tag{RSPP$'$}\label{Prob:RSPP_scale} \qquad\quad  \begin{aligned}
\min \quad & \sum_{k =1}^m \sum_{i \in A^k / G_X}\mfrac{|W^k_i|}{|\Aut(\Coup(E))|}\overline{\lambda}^k_i  \\
\text{s.t.}\quad & \sum_{i \in D^0 / G_X} \mfrac{d^+(Z^0_i)}{|\Aut(\Coup(E))|}\overline{\theta}^0_i = 1,~ \sum_{i \in D^m / G_X} \mfrac{d^-( Z^m_i)}{|\Aut(\Coup(E))|}\overline{\theta}^m_i = 1 \\
& \begin{aligned} & \sum_{\substack{i \in \delta^-(O^k_u, \\ D^{k-1}/G_X)}}\overline{\theta}^{k-1}_i + \sum_{\substack{i \in \delta^-(O^k_u, \\ A^k /G_X)}}d^-(W^k_i)\overline{\lambda}_i^k  =&& \\
 & \qquad \quad \sum_{\substack{i \in \delta^+(O^k_u, \\ D^k / G_X)}}\overline{\theta}^{k}_i + \sum_{\substack{i \in \delta^+(O^k_u,\\ A^k / G_X)}}d^+( W^k_i)\overline{\lambda}_i^k  \qquad \forall u \in V^k / G_X,~k \in [m] \end{aligned}\\
& 0 \leq \overline{\lambda}^k_i  \quad \forall i \in A^k / G_X,~k \in [m] \\
& 0 \leq \overline{\theta}^k_i  \quad \forall i \in D^k / G_X,~k \in \{0\} \cup [m] \end{aligned}  \hspace{-1cm}
\end{align}

\reqnomode

Recall that the NNCP is in general $\mathcal{NP}$-hard \cite{SiraichiEtAl}.
Based on the LP formulation~\eqref{Prob:RSPP_scale}, we are able to unfold some special cases where the problem turns out to be polynomial time solvable. The condition that provides the key to this complexity result is the order of the automorphism group of the coupling graph.

Since all permutations in $B_\tau$ should setwise stabilize the sets $\tau^{-1}(S_i)$ for all $i \in [l]$, it follows that~$B_\tau$ is a subgroup of $\mathbb{S}_n(\mathcal{G})$, where $\mathcal{G} := \{\tau^{-1}(S_1), \ldots, \tau^{-1}(S_l)\}$. The order of~$\mathbb{S}_n(\mathcal{G})$ is~$2^p f!$, which implies that $|B_\tau| \leq 2^p f!$. This leads to the following complexity result.

\begin{theorem} \label{Thm:Complexity}
The NNCP is polynomial time solvable on coupling graphs with automorphism groups of order $\Omega((n-b)!)$, where $n$ is the number of vertices in the coupling graph and $b$ is a constant independent of $n$.
\end{theorem}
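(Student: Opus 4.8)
The plan is to show that the symmetry-reduced linear program \eqref{Prob:RSPP_scale} can be built and solved in time polynomial in $n$ and $m$, and then to invoke Theorem~\ref{Thm:Reynolds} to conclude that its optimal value coincides with the optimum of \eqref{Prob:SPP}, i.e.\ with the minimum number of SWAP gates. Since the size of \eqref{Prob:RSPP_scale} is governed entirely by the number of orbits and orbitals tabulated in Table~\ref{Table:quotients}, the whole argument reduces to bounding these quotient cardinalities polynomially under the hypothesis $|\Aut(\Coup(E))| = \Omega((n-b)!)$.

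First I would bound the quotients. The inequality $|B_\tau| \leq 2^p f!$ established just before the statement gives $\sum_{\tau \in \mathbb{S}_n}|B_\tau| \leq n!\,2^p f!$, and trivially $|E/B_\tau| \leq |E|$. Substituting into the formulas of Table~\ref{Table:quotients}, the common factor $2^p f!$ cancels and yields
\begin{align*}
|V^k/G_X|,\ |D^k/G_X| \leq \frac{n!}{|\Aut(\Coup(E))|}, \qquad |A^k/G_X| \leq \frac{|E| \cdot n!}{|\Aut(\Coup(E))|}.
\end{align*}
Under the hypothesis the index $n!/|\Aut(\Coup(E))| = O(n!/(n-b)!) = O(n^b)$ is polynomial in $n$, as $b$ is a fixed constant; together with $|E| = O(n^2)$ this shows that each layer contributes $O(n^b)$ orbits and $O(n^{b+2})$ orbitals. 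Summing over the $m$ layers, \eqref{Prob:RSPP_scale} has $O(m\,n^{b+2})$ variables and $O(m\,n^b)$ constraints.

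Next I would argue that the program can actually be assembled in polynomial time. The crucial point is that the number of left cosets of $\Aut(\Coup(E))$ in $\mathbb{S}_n$ equals the polynomial index $n!/|\Aut(\Coup(E))| = O(n^b)$, so a transversal is enumerable in polynomial time (e.g.\ via Dixon and Majeed~\cite{DixonMajeed}). Merging these cosets into the orbits of \eqref{Eq:OrbitCosets} using the right cosets of $\mathbb{S}_n(\mathcal{F})$, and then computing, for each orbit representative $\tau$, the subgroup $B_\tau$, the orbital degrees $d^{\pm}(W^k_i)$ and $d^{\pm}(Z^k_i)$, and the incidence sets $\delta^{\pm}$, only involves the coupling graph $\Coup(E)$, whose size is polynomial; all of these can be obtained by direct enumeration over $E$. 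The coefficients in \eqref{Prob:RSPP_scale} are rationals whose numerator and denominator are bounded in magnitude by $n!\cdot|E|$ (since $2^p f! \mid n!$ by Lagrange's theorem and $d^{\pm}\leq|E|$), hence have bit-size $O(n\log n)$, so a standard polynomial-time LP algorithm solves the resulting program.

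The main obstacle is the construction step rather than the size bound: one must verify that orbit and orbital representatives, together with all degree and incidence data, are computable without ever enumerating the factorially many elements of $\mathbb{S}_n$. This is precisely what the polynomial index $n!/|\Aut(\Coup(E))|$ buys, as it caps the number of cosets that ever need to be touched. Finally, once an optimal solution of \eqref{Prob:RSPP_scale} is at hand its objective value already equals the NNCP optimum by Theorem~\ref{Thm:Reynolds}; recovering an explicit optimal sequence of qubit orders is handled by the backward reconstruction of Section~\ref{Subsec:BackwardReconstruction}, which lifts the reduced flow to an $(s,t)$-path in $X$ in polynomial time.
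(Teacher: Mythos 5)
Your proposal is correct and follows essentially the same route as the paper: the paper's proof also bounds the number of variables of the reduced LP via the quotient cardinalities of Table~\ref{Table:quotients} together with $|B_\tau|\leq 2^p f!$ and $|E/B_\tau|\leq |E|$, obtaining $O(m|E|\,n!/|\Aut(\Coup(E))|)=O(m|E|n^b)$. The only difference is that you additionally spell out the polynomial-time constructibility of the program (via the polynomial coset index), the bit-size of the coefficients, and the backward reconstruction — details the paper leaves implicit by deferring to Sections~\ref{Subsec:Orbit} and~\ref{Subsec:BackwardReconstruction}.
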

\begin{proof}
The number of variables in~\eqref{Prob:RSPP} equals $m|A^1 / G_X| + |D^0 / G_X| + m|D^1 / G_X|$. Based on Table~\ref{Table:quotients} and the inequalities $|B_\tau| \leq 2^p f!$, $|E / B_\tau| \leq |E|$ and~\mbox{$|F^k| \leq 2|E|(n-2)!$} for all~$\tau \in \mathbb{S}_n$ and~$k \in [m]$, we have
\begin{align*}
    & \quad \frac{m \sum_{\tau \in \mathbb{S}_n} |B_\tau| \cdot |E / B_\tau|}{2^p f! \cdot |\Aut(\Coup(E))|} + \frac{\sum_{\tau \in \mathbb{S}_n} |B_\tau|}{2^p f! \cdot |\Aut(\Coup(E))|}  + \frac{m \sum_{\tau \in F^1} |B_\tau|}{2^p f! \cdot |\Aut(\Coup(E))|}  \\
    \leq & \quad \frac{m~2^p f! |
    E| n!}{2^p f! \cdot |\Aut(\Coup(E))|} + \frac{2^p f! n!}{2^p f! \cdot |\Aut(\Coup(E))|}  + \frac{m~2 |E| (n-2)! 2^p f!}{2^p f! \cdot |\Aut(\Coup(E))|} \\
    = &\quad  O \left( \frac{m~|E|~n!}{|\Aut(\Coup(E))|}\right). 
\end{align*}
Whenever $|\Aut(\Coup(E))| = \Omega ((n-b)!)$, the number of variables in~\eqref{Prob:RSPP} is $O \left( m|E| n^b \right)$. Since~$b$ does not depend on the input, the number of variables in the reduced instance is polynomial in $n$,~$m$ and $|E|$. 
\end{proof}
The implication of Theorem~\ref{Thm:Complexity} does not solely restrict to trivial NNCP classes, such as the ones with a coupling graph that is complete. An example of a less trivial class of coupling graphs having a sufficiently large automorphism group are the bicliques, i.e., the complete bipartite graphs. 
\begin{corollary} \label{Cor:bicliquecomplexity}
The NNCP is polynomial time solvable on the biclique $K_{M,N}$ with $M$ of fixed size. In particular, the NNCP on the star $K_{1,N}$ is polynomial time solvable. 
\end{corollary}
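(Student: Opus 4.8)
The plan is to derive this directly from Theorem~\ref{Thm:Complexity}, so the entire task reduces to exhibiting a constant $b$, independent of $n = M + N$, for which the automorphism group of the coupling graph $K_{M,N}$ has order $\Omega((n-b)!)$. Once such a $b$ is found, Theorem~\ref{Thm:Complexity} immediately gives that the number of variables in~\eqref{Prob:RSPP} (and, by the analogous orbit count, the number of constraints) is polynomial in $n$, $m$ and $|E| = MN$, so the symmetry-reduced LP is solvable in polynomial time.

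First I would determine $\Aut(K_{M,N})$. The two colour classes of the biclique have sizes $M$ and $N$; any automorphism permutes the vertices within each class arbitrarily, and may additionally interchange the two classes precisely when $M = N$. Hence for $M \neq N$ one has $\Aut(K_{M,N}) \cong \mathbb{S}_M \times \mathbb{S}_N$ of order $M!\,N!$, whereas for $M = N$ the order is $2(M!)^2$, which is only larger. Since $M$ is fixed and $N = n - M$, for every $N > M$ we are in the case $M \neq N$ and obtain $|\Aut(K_{M,N})| = M!\,(n-M)! \geq (n-M)! = \Omega((n-M)!)$, because $M!$ is a positive constant. Taking $b := M$, a constant independent of $n$, places the whole biclique family within the hypothesis of Theorem~\ref{Thm:Complexity}.

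Applying Theorem~\ref{Thm:Complexity} with $b = M$ then yields the first assertion. For the star $K_{1,N}$ I would specialise to $M = 1$: here the centre is the unique vertex of degree $N \geq 2$ and is therefore fixed by every automorphism, while the $N$ leaves may be permuted freely, so $\Aut(K_{1,N}) \cong \mathbb{S}_N$ has order $N! = (n-1)!$. Thus $b = 1$ works and the star instance is polynomial-time solvable as a special case.

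There is no genuine difficulty in the argument; the only points requiring care are the degenerate small members of the family. For $M = 2$ the biclique $K_{2,2}$ equals $C_4$, and for $M = 1$ the biclique $K_{1,1}$ equals $K_2$, both excluded from the clean identity $G_X = \Aut(X)$. This is harmless: for fixed $M$ these exceptions occur for only finitely many values of $N$, so they do not affect the asymptotic bound $\Omega((n-b)!)$, and in any case $G_X$ remains a subgroup of $\Aut(X)$ for these graphs, so the Reynolds reduction of Theorem~\ref{Thm:Reynolds} and the orbital size estimates of Table~\ref{Table:quotients} underlying Theorem~\ref{Thm:Complexity} continue to apply verbatim.
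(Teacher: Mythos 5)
Your proposal is correct and follows exactly the route the paper intends: read off $\Aut(K_{M,N}) \cong \mathbb{S}_M \times \mathbb{S}_N$ (order $M!\,(n-M)!$, cf.\ Table~\ref{Tab:summarycoupling}), take $b = M$ in Theorem~\ref{Thm:Complexity}, and specialise to $M=1$ for the star. The extra care you take with the degenerate members $K_{1,1}$ and $K_{2,2}$ is sound but not needed for the asymptotic statement, since $G_X$ remains a subgroup of $\Aut(X)$ there and only finitely many $N$ are affected.
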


\subsection{Reduced combinatorial formulation}
\label{Subsec:reducedSPP}
Similar to~\eqref{Prob:SPP} being an LP formulation of a shortest path problem, we show in this section that~\eqref{Prob:RSPP} and \eqref{Prob:RSPP_scale} also have a combinatorial interpretation. Such combinatorial approaches often have the potential to induce efficient algorithms that are favoured over solving their LP formulation. In order to simplify notation, we work with~\eqref{Prob:RSPP} in this section, although the construction for~\eqref{Prob:RSPP_scale} is similar.

To view~\eqref{Prob:RSPP} as a combinatorial problem, we consider the so-called quotient graph of~$X$ under the action of $G_X$. In its most general form, a quotient graph of a graph $X$ is induced by an equivalence relation on the vertices of $X$. We below provide the formal definition for the particular case where the equivalence relation is induced by an automorphism group of~$X$. 

\begin{definition}[Quotient graph implied by automorphisms]
Let $X = (V,A)$ be a directed graph and let $G$ be a subgroup of $\Aut(X)$. Then the quotient graph of $X$ under $G$ is the graph $\mathcal{X} = (\mathcal{V}, \mathcal{A})$ with $\mathcal{V} := V / G$ and $\mathcal{A} := A / G \subseteq \mathcal{V} \times \mathcal{V}$. 
\end{definition}

Since all arcs within an orbital of $X$ start at vertices in the same orbit and end at vertices in the same orbit, the quotient graph is well-defined. Observe that $\mathcal{X}$ can contain loops and multi-arcs, even if $X$ is simple.

Let $\mathcal{X} = (\mathcal{V},\mathcal{A})$ be the quotient graph of $X$ under $G_X$. Since the source vertex $s$ and the sink vertex $t$ are in isolated orbits, the vertices $s$ and $t$ are again in $\mathcal{V}$. By abuse of notation, we again denote these vertices as $s, t \in \mathcal{V}$. Since the constraints and variables in~\eqref{Prob:RSPP} correspond to orbits and orbitals of $X$ under $G_X$, respectively, the problem~\eqref{Prob:RSPP} is an optimization problem on the quotient graph $\mathcal{X}$. Now, for all $(j,\ell) \in \mathcal{A}$ we define the following flow variable $f_{j \ell}$: 
\begin{align} \label{Eq:flowf}
    f_{j \ell}&:= \begin{cases} 
    d^+(Z^0_i)\theta^{0}_i & \text{if $(j, \ell)$ corresponds to $Z^0_i$,} \\
    \theta_{i}^k & \text{if $(j, \ell)$ corresponds to $Z^k_i$, $\,\,k \in [m]$,} \\
    d^+(W^k_i)\lambda^k_i & \text{if $(j, \ell)$ corresponds to $W^k_i$, $k \in [m]$.}
\end{cases}
\intertext{Moreover, we define for all $(j, \ell) \in \mathcal{A}$ a cost vector}
    w_{j \ell}&:= \begin{cases} 
    \frac{|W^k_i|}{d^+(W^k_i)} \quad~& \text{if $(j, \ell)$ corresponds to $W^k_i$, $k \in [m]$,}\\
    0 & \text{otherwise,}
\end{cases}
\intertext{and an upper bound vector}
    u_{j \ell}&:= \begin{cases} 
    d^+(Z^0_i) \quad~& \text{if $(j, \ell)$ corresponds to $Z^0_i$,} \\
    1 & \text{if $(j, \ell)$ corresponds to $Z^k_i$, $\,\,k \in [m]$,} \\
    d^+(W^k_i) & \text{if $(j, \ell)$ corresponds to $W^k_i$, $k \in [m]$.}
\end{cases}
\intertext{Finally, for all $(j,\ell) \in \mathcal{A}$ we define a multiplier $p_{j \ell}$:}
p_{j \ell}&:= \begin{cases} 
    \frac{d^-(W^k_i)}{d^+(W^k_i)}\quad~& \text{if $(j, \ell)$ corresponds to $W^k_i$, $k \in [m]$,}\\
    d^-(Z^m_i) & \text{if $(j, \ell)$ corresponds to $Z^m_i$,} \\
    1 & \text{otherwise.}
\end{cases}
\end{align}
We now substitute $f_{j\ell}, w_{j \ell}$ and $p_{j\ell}$ for all orbitals $(j, \ell) \in \mathcal{A}$ into~\eqref{Prob:RSPP}. This yields an equivalent linear programming problem that has the structure of a minimum cost generalized network flow problem: 
\leqnomode
\begin{align}  \tag{GNFP} \label{Prob:GNFP} \qquad\qquad  \begin{aligned}
\min \quad &  \sum_{(j, \ell) \in \mathcal{A}} w_{j \ell} f_{ j \ell}\\
\text{s.t.} \quad & \sum_{(j,\ell) \in \delta^+(s)}f_{j \ell} = 1,~ \sum_{(j,\ell) \in \delta^-(t)}p_{j \ell}f_{j \ell} = 1 \\
& \sum_{(j,\ell) \in \delta^+(v)}f_{j \ell} = \sum_{(j,\ell) \in \delta^-(v)}p_{j \ell}f_{j \ell} \qquad \forall v \in \mathcal{V} \setminus \{s,t\} \\
& 0 \leq f_{j \ell} \leq u_{j \ell} \qquad \forall (i,j) \in \mathcal{A}. 
\end{aligned} 
\end{align}
\reqnomode
A generalized flow is a flow starting from a sink $s$, conserving the flow at each vertex and ending at a source $t$, where along each arc $(j, \ell)$ only a fraction of $p_{j\ell}$ of flow is moved from~$j$ to~$\ell$. This fraction, called the multiplier, can also be larger than one, which means that the flow is increased along the arc.
The problem~\eqref{Prob:GNFP} aims to send a generalized flow of one from $s$ to $t$ that has a minimal cost with respect to the cost vector $w$. The minimum cost generalized network flow problem is solvable in weakly polynomial time by the algorithm of Wayne~\cite{Wayne}. This is the only known combinatorial algorithm for this problem in the literature. 

\subsection{Symmetry-reduced NNCP algorithm} \label{Subsec:BackwardReconstruction}
In this section we show how an optimal solution to~\eqref{Prob:RSPP} or \eqref{Prob:GNFP} can be used to find an optimal sequence of qubit orders for the NNCP. Moreover, we briefly present an overview of the entire solution approach in terms of a pseudo-code. 

 By construction, solving~\eqref{Prob:RSPP} or ~\eqref{Prob:GNFP} provides the optimal cost of a shortest path in $X$. However, because of the reduction, the solutions of~\eqref{Prob:RSPP} or ~\eqref{Prob:GNFP} do no longer correspond itself to paths. Let $(\lambda, \theta)$ be an optimal solution to~\eqref{Prob:RSPP} (in case of solving~\eqref{Prob:GNFP}, we can obtain $(\lambda, \theta)$ from the flow variable $f$ by~\eqref{Eq:flowf}). Now, we define~$(x,y) \in \prod_{k =1}^m \mathbb{R}^{A^k} \times \prod_{k =0}^m \mathbb{R}^{D^k}$ as follows: 
\begin{align} \label{Eq:XandYsupport}
    x  := \left( \sum_{i \in A^k / G_X} \lambda^k_i\mathbbm{1}_{W^k_i} \right)_{k =1}^m \quad \text{and} \quad
    y  :=  \left( \sum_{i \in D^k / G_X} \theta^k_i\mathbbm{1}_{Z^k_i} \right)_{k =1}^m. 
\end{align}
It follows from the construction that the pair $(x,y)$ corresponds to an optimal solution of~\eqref{Prob:SPP}. Hence, it is a convex combination of characteristic vectors of $(s,t)$-paths in $X$. Let $X^{\sup}$ denote the subgraph of $X$ induced by the support of $(x,y)$. Then, $X^{\sup}$ is an acyclic graph. Namely, if there would exist a cycle in $X^{\sup}$, due to the orientation of the arcs in~$X$, it can only consist of arcs within one subgraph. Since these arcs all have a positive cost, the solution $(x,y)$ can be improved by excluding the cycle from it. By a similar argument, it follows that any $(s,t)$-path in $X^{\sup}$ must be optimal. Namely, if there is an $(s,t)$-path in the support with a strictly larger cost than the optimum, we can improve the solution $(x,y)$ by excluding this path from it.

These observations imply that any~$(s,t)$-path in $X^{\sup}$ is optimal. Finding such path can be done without actually constructing $X^{\sup}$. Starting from~$s$, we select an arbitrary arc from an orbital in~$D^0 / G_X$ that is in the support of $\theta^0$. This arc leads to a new vertex $\tau$. From the orbit where $\tau$ belongs to, we again select an orbital leaving this orbit that has a support in the optimal solution $(\lambda, \theta)$. Within this orbital, there is at least one arc starting from $\tau$ and we select such an arc arbitrarily if there are multiple. We continue doing this, which will eventually lead to the sink vertex $t$. It follows from the discussion above that this $(s,t)$-path provides an optimal qubit ordering for the NNCP. 

We end this section by giving an overview of the symmetry-reduced NNCP algorithm. The approach is given in pseudo-code in Algorithm~\ref{Alg:FinalAlgorithm}. We emphasize that it does not rely on the use
of algebraic software, nor does it require a construction of the full graph $X$.

\algrenewcommand\algorithmicrequire{\textbf{Input:}}
\algrenewcommand\algorithmicensure{\textbf{Output:}}
\begin{algorithm}[h!]
\scriptsize
\caption{Symmetry-reduced NNCP algorithm}\label{Alg:FinalAlgorithm}
\begin{algorithmic}[1]
\Require NNCP instance $\Gamma = (Q,C)$ and coupling graph $\Coup(E) = (L,E)$. 
\State Construct gate graph $(Q,U)$ and fixing pattern $\mathcal{F}$. 
\State Construct orbit representation $\mathcal{R}$ of $\mathbb{S}_n$ under the action $G_\sub$. 
\State Initialize quotient subgraph $\mathcal{H}^{\sub} = (\mathcal{V}^{\sub},~\mathcal{A}^{\sub})$ with $\mathcal{V}^{\sub}$ indexed by $\mathcal{R}$ and $\mathcal{A}^{\sub} = \emptyset$. 
\For{$\tau \in \mathcal{R}$}
\State Determine $B_\tau$ and $\mathcal{R}(E / B_\tau)$
\For{$\{i,j\} \in \mathcal{R}(E / B_\tau)$} 
\State Construct an arc in $\mathcal{H}$ between $\tau$ and the orbit to which $\tau(i~j)$ belongs, and add it to $\mathcal{A}^{\sub}$.
\State Compute the cardinality of $\Orb(\tau, \tau(i~j))$ and its out- and in-degree in $\Orb(\tau)$ and $\Orb(\tau(i~j))$, respectively. 
\EndFor
\EndFor
\State Initialize the quotient graph $\mathcal{X} = (\mathcal{V},\mathcal{A})$ where $\mathcal{V}$ consists of $m$ copies of $\mathcal{V}^{\sub}$, a source $s$ and a sink $t$, and $\mathcal{A}$ consists of all arcs within the subgraphs $\mathcal{A}^{\sub}$. Add to $\mathcal{A}$ all arcs between $s$ and the first subgraph.
\For{$g^k\in C$}
\State Determine $\mathcal{R}(D^k / G_X)$.
\If{$g$ is the $m$th quantum gate}
\State For all orbit representatives $\tau \in \mathcal{R}(D^k / G_X)$, add arcs from $\Orb(\tau)$ to sink vertex $t$.
\Else 
\State For all orbit representatives $\tau \in \mathcal{R}(D^k / G_X)$, add arcs from $\Orb(\tau)$ to same orbit in next subgraph.
\EndIf
\EndFor
\State Obtain optimal $(\lambda,\theta)$ pair via solving either~\eqref{Prob:RSPP}, \eqref{Prob:RSPP_scale} or the generalized network flow problem~\eqref{Prob:GNFP}.
\State Find an optimal sequence of qubit orders $\tau^k, k \in [m]$ by identifying any $(s,t)$-path in the support of $(x,y)$, where $(x,y)$ are defined as in~\eqref{Eq:XandYsupport}. 
\Ensure $\tau^k$, $k \in [m]$
\end{algorithmic}
\end{algorithm}

{
\color{black}
\subsection{Dynamic programming algorithm}
\label{Subsec:DP}

Suppose we additionally assume that $\mathbb{S}_n(\mathcal{F})$ is trivial, i.e., the fixing pattern only consists of singletons. Notice that this assumption holds for most quantum circuits, as the gate graph $(Q,U)$ is often connected even if the number of gates is small. In that case, the subgroup $B_\tau$ is trivial for all $\tau \in \mathbb{S}_n$, implying that the problem~\eqref{Prob:GNFP} can be solved more efficiently. Indeed, we then have $d^+(W^k_i) = d^-(W^k_i) = 1$ for all orbitals $W^k_i$, hence~$p_{j \ell} = 1$ for all~$W^k_i$. Now, for all $(j, \ell) \in \delta^-(t)$ we replace $p_{j \ell}f_{j \ell}$ by a new variable, say~$g_{j \ell}$, that is upper-bounded by~$d^-(Z^m_i)$. After these modifications, the resulting problem equals the LP formulation of a shortest path problem, for which strongly-polynomial time algorithms exist~\cite{FredmanTarjan}. 

In fact, due to the layered structure of the quotient graph $\mathcal{X} = (\mathcal{V}, \mathcal{A})$, the shortest path problem can be solved by a dynamic programming algorithm, which we briefly describe below. 

As $\mathbb{S}_n(\mathcal{F})$ is trivial, each orbit $\Orb(\tau)$ can be written as $\tau \Aut (\Coup (E))$, see~\eqref{Eq:OrbitCosets}. Hence, the orbits under the action of $G_X$ correspond to the left cosets of $\Aut(\Coup(E))$ in $\mathbb{S}_n$ and the orbit representation $\mathcal{R}$ is in fact a representation of left cosets of $\mathbb{S}_n$ in $\Aut(\Coup(E))$. Now, we define for each gate $k \in [m]$ and each representative $\tau \in \mathcal{R}$ the integer
\begin{align*}
    N(k,\tau ) := \quad & \text{the optimal number of SWAP gates to be inserted up to gate $g^k$} \\ & \text{if the qubit order prior to applying gate $g^k$ is $\tau$},
\end{align*}
where we set $N(k,\tau) = \infty$ if qubit order $\tau$ does not comply with gate $g^k$. Now, the values $N(k,\tau)$ are computed by the following dynamic programming iteration. Define $N(0,\tau) = 0$ for all $\tau \in \mathcal{R}$ and let
\begin{align*}
    N(k,\tau) = \begin{cases}
        \infty & \text{if $\tau^{-1}(g^k) \notin E$,} \\
        \min \big\{ N(k-1,\tau),~\min_{\hat{\tau} \in \mathcal{R} \setminus \{\tau\}} \left \{ N(k-1,\hat{\tau}) + \mathcal{J}_T(\hat{\tau}, \tau ) \right\} \big\} & \text{otherwise,}
    \end{cases}
\end{align*}
for all $k \in [m]$ and $\tau \in \mathcal{R}$, where
\begin{align*}
    \mathcal{J}_T(\hat{\tau}, \tau ) := \min\{ J_T(\hat{\tau}, \tilde{\tau}) \, : \, \, \tilde{\tau} \in \tau \Aut(\Coup(E))\}, 
\end{align*}
with the metric $J_T$ as defined in~\eqref{def:metricJ}. Thus, $\mathcal{J}_T(\hat{\tau},\tau)$ denotes the minimum distance $J_T$ between any pair of representatives of the cosets containing $\hat{\tau}$ and $\tau$. The dynamic programming algorithm relies on the fact that the shortest path from $s$ to qubit order $\tau$ in layer $k$ can be computed by considering all connections between qubit orders of layers $k-1$ and $k$. To transform qubit order $\hat{\tau}$ into $\tau$, a total number of $\mathcal{J}_T(\hat{\tau},\tau)$ SWAP gates need to be inserted, whereas no additional SWAP gates are needed if $\tau$ is already selected as the qubit order in the previous layer $k-1$. The optimal solution to the NNCP is given by $\min_{\tau \in \mathcal{R}} N(m,\tau)$.

The efficiency of the dynamic programming algorithm relies on the number of representatives in $\mathcal{R}$ and the complexity of computing the distance $\mathcal{J}_T$. For example, when the coupling graph is the star graph, we have $|\mathcal{R}| = n$ and $\mathcal{J}_T(\tau_1,\tau_2) = 1$ for all $\tau_1,\tau_2 \in \mathcal{R}$ with $\tau_1 \neq \tau_2$. The latter follows from the fact that each representative is fully characterized by the qubit that is placed on the center location of the star graph and that only one SWAP gate is necessary to place any other qubit on this center location. For the cycle graph, however, $|\mathcal{R}|$ is still exponentially large, although the metric $J_T$ on cyclic coupling graphs can be computed in polynomial time~\cite{Jerrum}. For general coupling graphs, even the computation of $J_T$, and thus of $\mathcal{J}_T$, is $\mathcal{NP}$-hard~\cite{Miltzow}, limiting the applicability of this dynamic programming algorithm. In such cases, the symmetry-reduced NNCP algorithm described in Algorithm~\ref{Alg:FinalAlgorithm} is favoured.

In case $\mathbb{S}_n(\mathcal{F})$ is not trivial, the above dynamic programming algorithm is still valid, although the approach does not exploit all symmetries in the underlying instance. In contrast, the method described in Algorithm~\ref{Alg:FinalAlgorithm} does make use of the full symmetry information of the problem.

}

\section{Special coupling graphs} \label{Sec:SpecialCoupling}
Of key importance in the formulation discussed in Section~\ref{Sec:SymmetryReduction} are the orbit and orbital representation of the subgraphs, which rely on the subgroups $B_\tau$. These objects heavily depend on the specific coupling graph. In this section we demonstrate how these objects are obtained for three specific structured coupling graphs: the cycle graph, the biclique graph and the star graph.

{\color{black}
As mentioned before, other graph types that are frequently used in the topology of quantum architectures are the linear array and two-dimensional grid graphs.  In~\cite{Mulderij}, the symmetry of the linear array (whose automorphism group has size 2) is exploited in a shortest path formulation of the NNCP. The symmetry-reduction of the NNCP on square grid graphs is described in~\cite{MeijerThesis}. Since these graphs have only small automorphism groups, symmetry-reduction techniques are less powerful on such coupling graphs. For that reason, we exclude them from the analysis in this paper, but we refer the interested reader to the above-mentioned papers for further details.
}

Table~\ref{Tab:summarycoupling} provides an overview of certain important characteristics of each of the considered coupling graphs. Details are provided in the subsections below.

\begin{table}[h!]
\centering
\footnotesize
\setlength{\tabcolsep}{5pt}
\begin{tabular}{@{}lccccl@{}}
\toprule
\textbf{Architecture} & \boldmath $n$ & \boldmath $|E|$ & \begin{tabular}{@{}c@{}} \textbf{Graph} \\ \textbf{structure} \end{tabular} & \boldmath $\Autbold(\Coupbold(E))$ & \hspace{0.8cm} \boldmath $|B_\tau|$ \\[0.5em] \midrule 

\textbf{Cycle} \boldmath $C_N$ & $N$ & $N$ & \begin{tabular}{@{}c@{}}
\vspace{0.1cm} \\
     \includegraphics[scale=0.2]{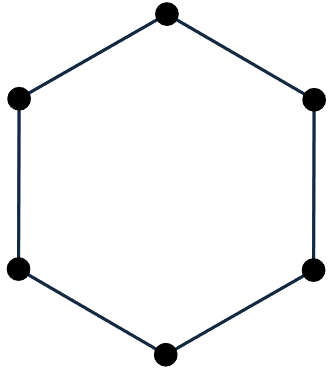} \\
     Example of $C_6$
    \end{tabular} & $\mathcal{D}_{2n}$ & 
    \begin{tabular}{@{}l@{}}1 (unless the NNCP instance \\ 
    \hspace{0.2cm} is trivial, see Theorem~\ref{Thm:trivialCycle}) \end{tabular} \\[5em]

\begin{tabular}{@{}l@{}} \textbf{Biclique} 
\boldmath $K_{M,N}$ \end{tabular} & $M + N$ & $MN$ & \begin{tabular}{@{}c@{}} \includegraphics[scale=0.17]{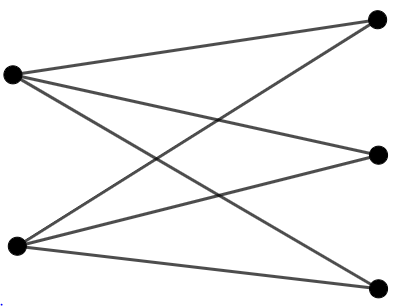} \\
    Example of $K_{2,3}$
    \end{tabular} & \begin{tabular}{@{}c@{}} $\mathbb{S}_M \times \mathbb{S}_{N}$ \\ (if $M \neq N$) \end{tabular} & \begin{tabular} {@{}l@{}}$2^{p - \hat{p}}f_1! f_2!$, where $\hat{p}, f_1$ and $f_2$ \\
    follow from Theorem~\ref{Thm:Biclique} \end{tabular}\\[4em]
    
\textbf{Star} \boldmath $K_{1,N}$ & $N + 1$ & $N$ & \begin{tabular}{@{}c@{}} \includegraphics[scale=0.2]{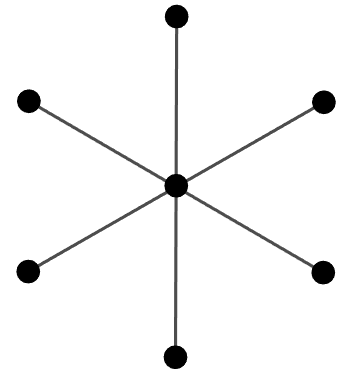} \\
    Example of $K_{1,6}$
    \end{tabular} & $\mathbb{S}_{n-1}$ & \begin{tabular} {@{}l@{}}$2^{p - \hat{p}}f_1! f_2!$, where $\hat{p}, f_1$ and $f_2$ \\
    follow from Theorem~\ref{Thm:Biclique} \end{tabular} \\[1em]    
    
\bottomrule
\end{tabular}
\caption{Summary of NNCP symmetry reduction characteristics for a set of special coupling graphs.\label{Tab:summarycoupling}}
\end{table}

\subsection[Cycle graph $C_N$]{Cycle graph \boldmath $C_N$} \label{Subsec:Cycle}
Let $C_N = (L,E)$ be the cycle on $N$ vertices, i.e., $L = [N]$ and ${E = \{ \{i, i+1\} \, : \, \, i \in [N-1]\} \cup \{N, 1\}}$. Then $n = |L| = N$. It is well-known that the automorphism group of~$C_N$ is given by~$\mathcal{D}_{2n}$, the dihedral group of order $2n$, see e.g., Godsil and Royle~\cite{GodsilRoyle}. This group consists of all reflections and rotations of the regular polygon of order $n$. It follows from Theorem~\ref{Thm:AllNormal} that $\Cay(\mathbb{S}_n, T)$ is normal when $N \geq 5$ and, as a consequence, its full automorphism group is isomorphic to $\mathbb{S}_n \times \mathcal{D}_{2n}$. The Cayley graph $\Aut(\Cay(\mathbb{S}_n, T))$ with $T$ induced by $C_{N}$ is in the literature known as the modified bubble-sort graph, see e.g., \cite{LakshmivarahanEtAl}.

The first step in studying the orbit and orbital structure of $X$ under $G_X$ is the identification of~$B_\tau$. It can be proven that $B_\tau$ is trivial under a very mild condition. Recall that $c$ is the number of qubits in a connected component of size at least three in $(Q,U)$, see Definition~\ref{Def:fixingpattern}.  
\begin{theorem} \label{Thm:trivialCycle}
Suppose $c \geq 3$. Then $B_\tau$ is trivial for all $\tau \in \mathbb{S}_n$. 
\end{theorem}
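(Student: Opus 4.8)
The plan is to exploit the fact that the hypothesis $c \geq 3$ forces the fixing pattern $\mathcal{F}$ to contain at least three singleton blocks, each of which pins down a single cycle location that every $b \in B_\tau$ must fix. Recall from Definition~\ref{Def:fixingpattern} that every qubit lying in a connected component of size at least three in the gate graph $(Q,U)$ constitutes its own singleton block of $\mathcal{F}$. Hence $c \geq 3$ guarantees three distinct singleton blocks $\{q_1\}, \{q_2\}, \{q_3\} \in \mathcal{F}$, with $q_1, q_2, q_3$ pairwise distinct.

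First I would unwind the defining condition of $B_\tau$ on these singletons. For a singleton block $S_i = \{q_j\}$ we have $\tau^{-1}(S_i) = \{\tau^{-1}(q_j)\}$, a one-element subset of the vertex set $L = [N]$ of the cycle. The requirement $b(\tau^{-1}(S_i)) = \tau^{-1}(S_i)$ then forces $b(\tau^{-1}(q_j)) = \tau^{-1}(q_j)$, i.e.\ $b$ fixes the location $\tau^{-1}(q_j)$ pointwise. Since $\tau^{-1}$ is a bijection and $q_1, q_2, q_3$ are distinct, the three locations $\tau^{-1}(q_1), \tau^{-1}(q_2), \tau^{-1}(q_3)$ are distinct vertices of $C_N$, all fixed by any $b \in B_\tau$.

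The key step is a fixed-point count in the dihedral group $\mathcal{D}_{2n} = \Aut(C_N)$ acting on the $n$ vertices of the cycle. I would argue that any non-identity element of $\mathcal{D}_{2n}$ fixes at most two vertices: a nontrivial rotation fixes no vertex, while a reflection fixes at most two (exactly one when $n$ is odd; either zero or two when $n$ is even, depending on whether the reflection axis passes through two opposite vertices or through two opposite edge midpoints). Consequently, an element of $\mathcal{D}_{2n}$ fixing three or more vertices must be the identity. Applying this to $b$, which fixes at least three vertices, yields $b = \id$, and therefore $B_\tau = \{\id\}$ is trivial for every $\tau \in \mathbb{S}_n$.

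The only mild obstacle is the clean justification of the fixed-point classification of $\mathcal{D}_{2n}$, where one must account for the parity of $n$ when handling the reflections; this is standard and can be dispatched in a sentence. Everything else is a direct unwinding of the definition of $B_\tau$ together with the observation that $c \geq 3$ supplies three independent pointwise-fixing constraints on the cycle.
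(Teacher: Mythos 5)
Your proposal is correct and follows essentially the same route as the paper's proof: extract three singleton blocks of $\mathcal{F}$ from the hypothesis $c \geq 3$, observe that setwise stabilization of singletons means pointwise fixing of three distinct cycle vertices, and conclude via the fact that no non-identity element of $\mathcal{D}_{2n}$ fixes more than two vertices. The only difference is that you spell out the rotation/reflection case analysis behind that last fact, which the paper simply asserts.
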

\begin{proof}
Let $\tau \in \mathbb{S}_n$. If the gate graph $(Q,U)$ contains a connected component of size at least three, then the fixing pattern $\mathcal{F}$ contains at least three single-element sets, say $\{i\}, \{j\}$ and~$\{\ell \}$. Since~$B_\tau$ is the subgroup of $\mathcal{D}_{2n}$ that setwise stabilizes the sets $\tau^{-1}(S_1), \ldots, \tau^{-1}(S_l)$, it follows that any~$b \in B_\tau$ must pointwise fix $\tau^{-1}(i)$, $\tau^{-1}(j)$ and $\tau^{-1}(\ell)$. However, the only element in $\mathcal{D}_{2n}$ that fixes more than two elements is the identity element. Thus, $B_\tau$ is trivial.  
\end{proof}

Observe that the condition of Theorem~\ref{Thm:trivialCycle} is not restrictive. Namely, when $c < 3$, the quantum circuit does not have overlapping quantum gates. This implies that a trivial qubit assignment is possible without the need of any inserted SWAP gates, making the NNCP instance trivial.

\subsection[Biclique graph $K_{M,N}$ and star graph $K_{1,N}$]{Biclique graph \boldmath $K_{M,N}$ and star graph \boldmath $K_{1,N}$} \label{Subsec:Biclique}
The biclique graph $K_{M,N}$ is given by ${L = [M] \sqcup [N]}$ and~${E = \{\{i,j\} \, : \, \, i \in [M], j \in [N]\}}$. The induced partition of the vertex set $L$ we denote by the sets $L_M$ and $L_N$. We assume here that~$M < N$. Any independent setwise permutation of vertices in $L_M$ and $L_N$ forms an automorphism of the graph, hence $\Aut(\Coup(E)) \cong \mathbb{S}_M \times \mathbb{S}_N$. The corresponding Cayley graph $\Cay(\mathbb{S}_n, T)$ is in the literature known as the generalized star graph, see e.g., \cite{GanesanOverview}. With respect to the structure of the subgroups~$B_\tau$, we prove the following result.
\begin{theorem} \label{Thm:Biclique}
    Let $\tau \in \mathbb{S}_n$. Let $\mathcal{F}'$ denote the fixing pattern obtained from $\mathcal{F}$ by replacing any~$S \in \mathcal{F}$ with $|S| \geq 2$ by  
    \begin{align*}
        S_1 = \{ i \in S \, : \, \, \tau^{-1}(i) \in L_M\} \quad \text{and} \quad S_2 = \{ i \in S \, : \, \, \tau^{-1}(i) \in L_N\}.
    \end{align*}
    Then $B_\tau \cong \mathbb{S}_n(\mathcal{F}')$. Moreover, let $\hat{p}$ denote the number of pairs $\{i,j\}$ for which $\tau^{-1}(i) \in L_M$ and~$\tau^{-1}(j) \in L_N$, let $f_1$ denote the number of elements in the free set that are mapped to~$L_M$ by~$\tau^{-1}$, and let $f_2 = f - f_1$. Then,
    \begin{align*}
        |B_\tau | = 2^{p - \hat{p}} f_1! f_2!.
    \end{align*}
\end{theorem}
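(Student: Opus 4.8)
The plan is to reduce the whole statement to one observation: the qubit order $\tau$ induces, by conjugation, an isomorphism between $\Aut(\Coup(E))$ acting on the locations $L$ and a subgroup of $\Sym(Q)$ acting on the qubits, under which $B_\tau$ is carried exactly onto $\mathbb{S}_n(\mathcal{F}')$. Concretely, first I would introduce the conjugation map $\phi : \Sym(L) \to \Sym(Q)$ given by $\phi(b) = \tau b \tau^{-1}$, which is a group isomorphism because $\tau$ is a bijection. Since $M < N$, the graph $K_{M,N}$ has no part-swapping automorphism, so $\Aut(\Coup(E)) = \Sym(L_M) \times \Sym(L_N)$, and $\phi$ maps this onto $\Sym(\tau(L_M)) \times \Sym(\tau(L_N))$. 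The central translation is that for any $S \subseteq Q$ and any $b$, the condition $b(\tau^{-1}(S)) = \tau^{-1}(S)$ holds if and only if $\phi(b)(S) = S$. Applying this to each part $S_i \in \mathcal{F}$ shows that $\phi(B_\tau) = \{ a \in \Sym(\tau(L_M)) \times \Sym(\tau(L_N)) \, : \, \, a(S_i) = S_i \text{ for all } i \}$.

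Next I would carry out the refinement step. For any $a$ preserving $\tau(L_M)$ and $\tau(L_N)$, fixing a part $S_i$ setwise is equivalent to fixing both $S_i \cap \tau(L_M) = S_i^{(1)}$ and $S_i \cap \tau(L_N) = S_i^{(2)}$, which are precisely the two pieces $S_1, S_2$ produced by the splitting in the theorem statement. Conversely, if $a$ fixes every refined part $S_i^{(1)}$ and $S_i^{(2)}$, it automatically preserves the bipartition, since $\tau(L_M)$ and $\tau(L_N)$ are unions of these parts; hence the a priori requirement of preserving the bipartition is redundant. This gives $\phi(B_\tau) = \mathbb{S}_n(\mathcal{F}')$ and therefore $B_\tau \cong \mathbb{S}_n(\mathcal{F}')$, as claimed.

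For the cardinality I would invoke the product formula $|\mathbb{S}_n(\mathcal{F}')| = \prod_{P} |P|!$ over the parts $P$ of $\mathcal{F}'$, exactly the counting behind $|\mathbb{S}_n(\mathcal{F})| = 2^p f!$. Singletons contribute $1$. Of the $p$ pairs, the $\hat{p}$ that straddle both parts of the bipartition split into two singletons and contribute $1$ each, while the remaining $p - \hat{p}$ pairs stay intact and contribute $2$ each, yielding a factor $2^{p-\hat{p}}$. The free set splits into blocks of sizes $f_1$ and $f_2 = f - f_1$, contributing $f_1! \, f_2!$. Multiplying gives $|B_\tau| = 2^{p-\hat{p}} f_1! f_2!$. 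The star $K_{1,N}$ is the special case $M = 1$, for which the argument is verbatim.

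I expect the only genuinely delicate point to be the logical equivalence in the refinement step: one must check that restricting to the bipartition-preserving subgroup is harmless, because the refined parts already union to $\tau(L_M)$ and $\tau(L_N)$, so the full stabilizer $\mathbb{S}_n(\mathcal{F}')$ inside $\Sym(Q)$ coincides with the bipartition-preserving stabilizer onto which $B_\tau$ is mapped. Everything else is routine bookkeeping with the conjugation and the product formula.
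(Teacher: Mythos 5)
Your proof is correct and follows essentially the same route as the paper's: both identify $B_\tau$ with the stabilizer of the refined partition by using that every automorphism of $K_{M,N}$ (with $M<N$) preserves the bipartition, the only difference being that you conjugate by $\tau$ to work on the qubit side while the paper works with the shifted partition $\mathcal{G}=\tau^{-1}(\mathcal{F})$ on the location side. Your explicit check that stabilizing the refined parts automatically forces preservation of the bipartition is a point the paper leaves implicit, but the argument and the counting are the same.
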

\begin{proof}
    Let $\mathcal{G} := \{\tau^{-1}(S_1), \ldots, \tau^{-1}(S_l)\}$ be the partition of $[n]$ defined by $\mathcal{F}$ shifted over~$\tau^{-1}$. Then,~$B_\tau$ is the subgroup of $\mathbb{S}_n(\mathcal{G})$ which are also automorphisms of~$K_{M,N}$. Since any automorphism of $K_{M,N}$ setwise fix the vertices in $L_M$ and $L_N$, we obtain $B_\tau$ by splitting each set of $\mathcal{G}$ into its subset in $L_M$ and its subset in $L_N$, leading to the partition $\mathcal{G}'$. The partition $\mathcal{F}'$ is exactly~$\mathcal{G}'$ shifted over $\tau$, leading to $B_\tau \cong \mathbb{S}_n(\mathcal{F}')$. The second part of the statement follows from counting the number of elements in $\mathbb{S}_n(\mathcal{F}')$. 
\end{proof}

The special case where $M = 1$ is commonly known as the star graph $K_{1,N}$. Its induced Cayley graph is studied in~\cite{LakshmivarahanEtAl}. Since we consider this coupling graph extensively in the numerical results of Section~\ref{Sec:CompResults}, we add this case explicitly to Table~\ref{Tab:summarycoupling}.

\section{Computational results}
\label{Sec:CompResults}
In this section we test our symmetry-reduced NNCP formulation on a set of instances for the coupling graphs discussed in Section~\ref{Sec:SpecialCoupling}. We compare the result against the nonreduced shortest path formulation~\eqref{Prob:SPP}. 

We first describe the design of our numerical tests in Section~\ref{Subsec:DesignExperiments}, after which we discuss the results on real and random instances in Section~\ref{Subsec:ResultsRevlib} and~\ref{Subsec:ResultsRandom}, respectively. 

\subsection{Design of computational experiments} \label{Subsec:DesignExperiments}
For our experiments we consider both realistic as well as randomly generated quantum circuits on different coupling graphs. As described in Section~\ref{Sec:NNCP}, we are justified to make two assumptions on the quantum circuits under consideration, imposing a preprocessing strategy in case these assumptions are not met: 
\begin{enumerate}
    \item Single-qubit gates can be ignored for the NNCP, since these do always comply with the adjacent interaction constraints. Without loss of generality, we therefore remove the single-qubit gates from the circuits in the preprocessing phase. 
    \item All gates that act on more than two qubits are decomposed into gates that act on one or two qubits. Nielsen and Chuang~\cite{NielsenChuang} have shown that these gates are universal, and that any quantum gate can therefore be decomposed into one- or two-qubit gates.
    There exists a large number of different decomposition strategies, leading to possibly different quantum gates (with the same functionality, however). As the choice of the optimal decomposition strategy is outside the scope of our research, we always choose the same strategy, namely the method considered in \cite{Mulderij, MulderijEtAl}. 
\end{enumerate}
The quantum circuits that we consider in this paper consist of general one- or two-qubit gates, multiple-control Toffoli gates up to size five, Peres gates and multiple-control Fredkin gates up to size four. In Appendix~\ref{App:Gates} we consider the decomposition of these gates into one- or two qubit gates, following the approach from~\cite{Mulderij, MulderijEtAl}. After that, we remove all single-qubit gates from the circuit. The preprocessed circuit that remains, will be the quantum circuit $\Gamma = (Q,C)$ that we take as an input to our approach. 

We consider the following two instance classes: 
\begin{itemize}
    \item \textbf{Real data}: Realistic quantum circuits that we consider are obtained from the RevLib library~\cite{Revlib}. This dataset consists of quantum gates of (well-known) reversible functions considered in the quantum literature. Due to the assumptions of the preprocessing phase, we only consider instances consisting of the above-mentioned gates, see Appendix~\ref{App:Gates} for an overview. This leads to a set of 84 instances with $n \in \{5, \ldots, 17\}$ and $m \in \{7,\ldots, 112\}$. 
    
    \item \textbf{Random data}: We also consider synthetic quantum gates in order to also test our approach on circuits consisting of more qubits and gates. We apply two strategies: 
    \begin{itemize}
        \item \textit{Random Class I}: Given $n$ and $m$, we create a random circuit on $n$ qubits consisting of $m$ two-qubit gates. Each gate acts on two distinct qubits that are chosen uniformly at random from $[n]$, independently from the other gates. 

        For each combination of $n \in \{20, 30, \ldots, 100\}$ and $m = \{2n, 4n\}$, we consider 5 randomly generated instances of this type. This leads to a test set of 90 instances. 
        
        \item \textit{Random Class II}: Given $n$ and $m$, we first create a random circuit on $n$ qubits consisting of $m$ gates selected from: Toffoli gate (on 3, 4 or 5 qubits), Fredkin gate (on 3 or 4 qubits), Peres gate, or a general two-qubit gate. The latter class includes the CNOT, SWAP and controlled-$V$ or -$V^\dag$ gates. Each gate type is selected with equal probability, and the qubits on which each gate acts are chosen uniformly at random from $[n]$. After that, we apply the preprocessing approach explained above to convert each circuit to an equivalent circuit of two-qubit gates. This leads to quantum gates with possibly more realistic patterns than Random Class I. 

        For each combination of $n \in \{20, 30, \ldots, 100\}$ and $m \in \{n, 2n\}$, we consider 5 randomly generated instances of this type, leading to a test set of 90 instances. 
        After the preprocessing step, the values of $m$ increase and are within~${117 \leq m \leq 1872}$. 
    \end{itemize}
\end{itemize}
We solve the NNCP for each quantum circuit on the following coupling graphs: 
\begin{itemize}
    \item \textbf{Cycle graph}: The undirected cycle $C_N$ on $N = n$ qubits, see~Section~\ref{Subsec:Cycle}. 

    \item \textbf{Star graph}: The star graph $K_{1,N}$ with $N = n-1$, see Section~\ref{Subsec:Biclique}. 

    \item \textbf{Biclique graph}: The biclique graph $K_{M,N}$ with $M = 2$ and $N = n - 2$, see Section~\ref{Subsec:Biclique}. 
\end{itemize}

For each combination of quantum circuit and coupling graph, we solve the unreduced LP-formulation~\eqref{Prob:SPP} and the reduced scaled formulation~\eqref{Prob:RSPP_scale}. The unreduced formulation is implemented by a full construction of the graph $X = (V,A)$. For the reduced formulation, we follow the steps of Algorithm~\ref{Alg:FinalAlgorithm}, which is
based on the results from Sections~\ref{Sec:ExploitSym}-\ref{Sec:SpecialCoupling}. Preliminary experiments have shown that the performance between the nonscaled and scaled formulations, \eqref{Prob:RSPP} and \eqref{Prob:RSPP_scale}, respectively, is very similar. However, as the size of the coefficients in~\eqref{Prob:RSPP} grows with the order of the automorphism group of $\Coup(E)$, the LP formulation becomes unstable for the star and biclique graphs when $n \geq 11$ or $n \geq 12$, respectively. Therefore, we only use the more robust scaled version~\eqref{Prob:RSPP_scale} in our tests. 

Experiments are carried out on a PC with an Intel(R)
Core(TM) i7-8700 CPU, 3.20GHz and~8~GB RAM. Our methods are implemented in Julia 1.8.4 using JuMP v1.6.0~\cite{JuMP} to model the mathematical optimization problems. We use the LP solver of Mosek 10.0~\cite{Mosek100} to solve our models in the default configuration. The maximum computation time (including the construction time of the program) is set to 2 hours.

\subsection{Results on RevLib instances} \label{Subsec:ResultsRevlib}
Table~\ref{Tab:RevlibCycle},~\ref{Tab:RevlibStar} and~\ref{Tab:RevlibBiclique} show the results for the RevLib instances on the cycle, star and biclique graph, respectively. The columns `$n$' and `$m$' show the number of qubits and quantum gates in the preprocessed circuit. The column `OPT' shows the optimal value of the NNCP instance, i.e., the minimum number of inserted SWAP gates in order to make the quantum circuit compliant. The columns `time $(RSPP')$' and `time $(SPP)$' show the computation time (i.e., clocktimes) in seconds to  solve the reduced model~\eqref{Prob:RSPP_scale} and the base model~\eqref{Prob:SPP}, respectively. The values are rounded to three decimals. The columns `\#var $(RSPP')$' and `\#const $(RSPP')$' denote the total number of variables and constraints after the symmetry reduction. The column `reduction \#var (\%)' shows the relative reduction in the number of variables compared to the base model, i.e., $\frac{\text{\#var $(SPP)$} - \text{\#var $(RSPP')$}}{\text{\#var $(SPP)$}} \cdot 100\%$, rounded to two decimal places. The final column shows the same relative reduction for the number of constraints.
Whenever a given instance is not solvable (including construction) within the time limit of 2 hours, or whenever an instance leads to a shortage of memory, we report a~`-' in the tables. 

It turned out that the 62 instances with $n = 5$ are very easy to compute for both models~$(SPP)$ and~$(RSPP')$. For that reason, results on these instances are not depicted in Tables~\ref{Tab:RevlibCycle}, \ref{Tab:RevlibStar} and~\ref{Tab:RevlibBiclique}. The total relative reduction in the number of variables and constraints on the instances with $n = 5$ turns out to be at least 90\% and $89.8\%$, respectively.

For the cycle graph, one can clearly see that the bottleneck in the computational limit is the number of qubits $n$. It follows from Table~\ref{Tab:RevlibCycle} that our approach is able to solve instances up to roughly~8 qubits, while the base model can only solve instances up to 7 qubits. The total computation time of~\eqref{Prob:RSPP_scale} is often negligible and below 30 seconds for the instances that can be solved. For the base model the total computation times are significantly higher, with a maximum difference of about a factor 100. This can be explained by the large reduction in the total number of variables and constraints, which are both above~$91\%$ for all instances. 

For the star graph, we conclude from Table~\ref{Tab:RevlibStar} that the reduced model can easily handle the full set of RevLib instances. The computation times are negligible for almost all instances and always below~0.2 seconds. This can be explained by the order of $\Aut(\Coup(E))$ being factorial in~$n$, implying that the model~\eqref{Prob:RSPP_scale} scales linearly in both $m$ and $n$. The relative reductions with the base model are enormous, i.e., above 99\% in terms of the number of variables and constraints on all instances. 
 For the unreduced model, the largest instance we can solve has $n = 8$ and $m = 36$, which could not be solved on the cycle coupling graph. This can be explained by the fact that the star graph on $n$ vertices has one edge less than the cycle graph on $n$ vertices, resulting in the Cayley graph containing significantly fewer edges. The computational frontier, however, is reached already at the next instance, for which the base model runs into memory issues.

Finally, the results on the biclique coupling graph look very similar to the results on the star graph, see Table~\ref{Tab:RevlibBiclique}. The total relative reduction between the models is extremely large, leading to all instances to be solvable within 0.25 seconds using~\eqref{Prob:RSPP_scale}. The computation times are slightly larger than for the star graph, which can be explained by the smaller size of the automorphism group of the biclique. For the unreduced formulation we can only solve up to $n = 7$, while the reduction in computation time for the largest instance solvable by~\eqref{Prob:SPP} is about a factor 4700.

{\color{black} For all the RevLib instances, the gate graph $(Q,U)$ turns out to consist of a single connected component, and thus, the group $\mathbb{S}_n(\mathcal{F})$ is trivial. This implies that all of the symmetry reduction results from the symmetries of the coupling graph.   }

\begin{table}[h]
\setlength{\tabcolsep}{3pt}
\centering
\scriptsize
\begin{tabular}{@{}lrrrSSrrSS@{}}
\toprule
\textbf{Benchmark}  & \multicolumn{1}{c}{\textbf{\boldmath $n$}} & \multicolumn{1}{c}{\textbf{\boldmath $m$}} & \multicolumn{1}{c}{\textbf{OPT}} & \multicolumn{1}{c}{\textbf{\begin{tabular}[c]{@{}c@{}}time\\ \boldmath $(RSPP')$\end{tabular}}} & \multicolumn{1}{c}{\textbf{\begin{tabular}[c]{@{}c@{}}time\\ \boldmath$(SPP)$\end{tabular}}} & \multicolumn{1}{c}{\textbf{\begin{tabular}[c]{@{}c@{}}\#var \\ \boldmath$(RSPP')$\end{tabular}}} &  \multicolumn{1}{c}{\textbf{\begin{tabular}[c]{@{}c@{}}\#const \\ \boldmath $(RSPP')$\end{tabular}}}  & \multicolumn{1}{c}{\textbf{\begin{tabular}[c]{@{}c@{}}reduction \\ \#var (\%)\end{tabular}}} & \multicolumn{1}{c}{\textbf{\begin{tabular}[c]{@{}c@{}}reduction\\ \#const (\%)\end{tabular}}} \\ \midrule
graycode6\_47       & 6                              & 5                              & 0                                & 0.000                                                                                 & 0.172                                                                               & 1980                                                                                    & 3602                                                                                      & 91.67                                                                                        & 91.62                                                                                         \\
graycode6\_48       & 6                              & 5                              & 0                                & 0.016                                                                                 & 0.172                                                                               & 1980                                                                                    & 3602                                                                                      & 91.67                                                                                        & 91.62                                                                                         \\
decod24-enable\_124 & 6                              & 21                             & 5                                & 0.047                                                                                 & 0.937                                                                               & 8124                                                                                    & 15122                                                                                     & 91.67                                                                                        & 91.65                                                                                         \\
decod24-enable\_125 & 6                              & 21                             & 4                                & 0.047                                                                                 & 0.906                                                                               & 8124                                                                                    & 15122                                                                                     & 91.67                                                                                        & 91.65                                                                                         \\
decod24-bdd\_294    & 6                              & 24                             & 8                                & 0.062                                                                                 & 1.203                                                                               & 9276                                                                                    & 17282                                                                                     & 91.67                                                                                        & 91.66                                                                                         \\
mod5adder\_129      & 6                              & 71                             & 27                               & 0.157                                                                                 & 4.563                                                                               & 27324                                                                                   & 51122                                                                                     & 91.67                                                                                        & 91.66                                                                                         \\
mod5adder\_128      & 6                              & 77                             & 32                               & 0.172                                                                                 & 4.250                                                                               & 29628                                                                                   & 55442                                                                                     & 91.67                                                                                        & 91.66                                                                                         \\
decod24-enable\_126 & 6                              & 86                             & 34                               & 0.188                                                                                 & 5.500                                                                               & 33084                                                                                   & 61922                                                                                     & 91.67                                                                                        & 91.66                                                                                         \\
xor5\_254           & 6                              & 5                              & 3                                & 0.016                                                                                 & 0.188                                                                               & 1980                                                                                    & 3602                                                                                      & 91.67                                                                                        & 91.62                                                                                         \\
ex1\_226            & 6                              & 5                              & 3                                & 0.016                                                                                 & 0.187                                                                               & 1980                                                                                    & 3602                                                                                      & 91.67                                                                                        & 91.62                                                                                         \\
4mod5-bdd\_287      & 7                              & 23                             & 8                                & 0.469                                                                                 & 36.203                                                                              & 61080                                                                                   & 115922                                                                                    & 92.86                                                                                        & 92.86                                                                                         \\
alu-bdd\_288        & 7                              & 28                             & 7                                & 0.641                                                                                 & 51.031                                                                              & 74280                                                                                   & 141122                                                                                    & 92.86                                                                                        & 92.86                                                                                         \\
ham7\_106           & 7                              & 49                             & 20                               & 1.172                                                                                 & 91.672                                                                              & 129720                                                                                  & 246962                                                                                    & 92.86                                                                                        & 92.86                                                                                         \\
ham7\_105           & 7                              & 65                             & 32                               & 1.485                                                                                 & 135.625                                                                             & 171960                                                                                  & 327602                                                                                    & 92.86                                                                                        & 92.86                                                                                         \\
ham7\_104           & 7                              & 83                             & 38                               & 1.984                                                                                 & 181.734                                                                             & 219480                                                                                  & 418322                                                                                    & 92.86                                                                                        & 92.86                                                                                         \\
rd53\_137           & 7                              & 66                             & 33                               & 3.750                                                                                 & 146.811                                                                             & 174600                                                                                  & 23762                                                                                     & 92.24                                                                                        & 92.86                                                                                         \\
rd53\_139           & 8                              & 36                             & 14                               & 22.672                                                                                &   {-}                                                                                  & 754200                                                                                  & 90722                                                                                     & 93.75                                                                                        & 93.75                                                                                         \\
rd53\_138           & 8                              & 44                             & 20                               & 26.266                                                                                &    {-}                                                                                 & 921240                                                                                  & 110882                                                                                    & 93.75                                                                                        & 93.75                                                                                         \\
mini\_alu\_305      & 10                             & 57                             &                      {-}             &            {-}                                                                           &         {-}                                                                             &     {-}                                                                                     &   {-}                                                                                         &  {-}                                                                                             &    {-}                                                                                            \\
sys6-v0\_144        & 10                             & 62                             &                         {-}          &     {-}                                                                                   &    {-}                                                                                  &  {-}                                                                                        &  {-}                                                                                          &          {-}                                                                                     &    {-}                                                                                            \\
rd73\_141           & 10                             & 64                             &                    {-}               &   {-}                                                                                     &  {-}                                                                                    &  {-}                                                                                        &   {-}                                                                                         &         {-}                                                                                      &   {-}                                                                                             \\
parity\_247         & 17                             & 16                             &                        {-}           &   {-}                                                                                     &  {-}                                                                                    & {-}                                                                                         &  {-}                                                                                          &        {-}                                                                                       &      {-}                                                                                          \\ \bottomrule
\end{tabular}
\caption{Results on the `RevLib' instances on the cyclic coupling graph. We compare the performance of the base model $(SPP)$ with the reduced model $(RSPP)$. Times are clocktimes given in seconds. \label{Tab:RevlibCycle} }
\end{table}

\begin{table}[H]
\setlength{\tabcolsep}{3pt}
\centering
\scriptsize
\begin{tabular}{@{}lrrrSSrrSS@{}}
\toprule
\textbf{Benchmark}  & \multicolumn{1}{c}{\textbf{\boldmath $n$}} & \multicolumn{1}{c}{\textbf{\boldmath $m$}} & \multicolumn{1}{c}{\textbf{OPT}} & \multicolumn{1}{c}{\textbf{\begin{tabular}[c]{@{}c@{}}time\\ \boldmath $(RSPP')$\end{tabular}}} & \multicolumn{1}{c}{\textbf{\begin{tabular}[c]{@{}c@{}}time\\ \boldmath$(SPP)$\end{tabular}}} & \multicolumn{1}{c}{\textbf{\begin{tabular}[c]{@{}c@{}}\#var \\ \boldmath$(RSPP')$\end{tabular}}} & \multicolumn{1}{c}{\textbf{\begin{tabular}[c]{@{}c@{}}\#const \\ \boldmath $(RSPP')$\end{tabular}}} & \multicolumn{1}{c}{\textbf{\begin{tabular}[c]{@{}c@{}}reduction \\ \#var (\%)\end{tabular}}} & \multicolumn{1}{c}{\textbf{\begin{tabular}[c]{@{}c@{}}reduction\\ \#const (\%)\end{tabular}}} \\ \midrule
graycode6\_47       & 6                              & 5                              & 2                                & 0.000                                                                                 & 0.125                                                                               & 166                                                                                     & 32                                                                                        & 99.17                                                                                        & 99.11                                                                                         \\
graycode6\_48       & 6                              & 5                              & 2                                & 0.000                                                                                 & 0.125                                                                               & 166                                                                                     & 32                                                                                        & 99.17                                                                                        & 99.11                                                                                         \\
decod24-enable\_124 & 6                              & 21                             & 4                                & 0.016                                                                                 & 0.609                                                                               & 678                                                                                     & 128                                                                                       & 99.17                                                                                        & 99.15                                                                                         \\
decod24-enable\_125 & 6                              & 21                             & 5                                & 0.000                                                                                 & 0.594                                                                               & 678                                                                                     & 128                                                                                       & 99.17                                                                                        & 99.15                                                                                         \\
decod24-bdd\_294    & 6                              & 24                             & 8                                & 0.000                                                                                 & 0.672                                                                               & 774                                                                                     & 146                                                                                       & 99.17                                                                                        & 99.16                                                                                         \\
mod5adder\_129      & 6                              & 71                             & 19                               & 0.000                                                                                 & 2.343                                                                               & 2278                                                                                    & 428                                                                                       & 99.17                                                                                        & 99.16                                                                                         \\
mod5adder\_128      & 6                              & 77                             & 18                               & 0.000                                                                                 & 2.953                                                                               & 2470                                                                                    & 464                                                                                       & 99.17                                                                                        & 99.16                                                                                         \\
decod24-enable\_126 & 6                              & 86                             & 19                               & 0.016                                                                                 & 2.718                                                                               & 2758                                                                                    & 518                                                                                       & 99.17                                                                                        & 99.16                                                                                         \\
xor5\_254           & 6                              & 5                              & 0                                & 0.000                                                                                 & 0.109                                                                               & 166                                                                                     & 32                                                                                        & 99.17                                                                                        & 99.11                                                                                         \\
ex1\_226            & 6                              & 5                              & 0                                & 0.000                                                                                 & 0.125                                                                               & 166                                                                                     & 32                                                                                        & 99.17                                                                                        & 99.11                                                                                         \\
4mod5-bdd\_287      & 7                              & 23                             & 5                                & 0.000                                                                                 & 14.875                                                                              & 1019                                                                                    & 163                                                                                       & 99.86                                                                                        & 99.86                                                                                         \\
alu-bdd\_288        & 7                              & 28                             & 11                               & 0.000                                                                                 & 16.141                                                                              & 1239                                                                                    & 198                                                                                       & 99.86                                                                                        & 99.86                                                                                         \\
ham7\_106           & 7                              & 49                             & 20                               & 0.015                                                                                 & 28.328                                                                              & 2163                                                                                    & 345                                                                                       & 99.86                                                                                        & 99.86                                                                                         \\
ham7\_105           & 7                              & 65                             & 18                               & 0.000                                                                                 & 36.157                                                                              & 2867                                                                                    & 457                                                                                       & 99.86                                                                                        & 99.86                                                                                         \\
ham7\_104           & 7                              & 83                             & 18                               & 0.015                                                                                 & 57.516                                                                              & 3659                                                                                    & 583                                                                                       & 99.86                                                                                        & 99.86                                                                                         \\
rd53\_137           & 7                              & 66                             & 10                               & 0.000                                                                                 & 38.521                                                                              & 2911                                                                                    & 464                                                                                       & 99.86                                                                                        & 99.86                                                                                         \\
rd53\_139           & 8                              & 36                             & 15                               & 0.047                                                                                 & 7031.828                                                                            & 2096                                                                                    & 290                                                                                       & 99.98                                                                                        & 99.98                                                                                         \\
rd53\_138           & 8                              & 44                             & 12                               & 0.000                                                                                 & {-}                                                                                   & 2560                                                                                    & 354                                                                                       & 99.98                                                                                        & 99.98                                                                                         \\
mini\_alu\_305      & 10                             & 57                             & 16                               & 0.016                                                                                 & {-}                                                                                   & 5254                                                                                    & 572                                                                                       & 100.00                                                                                       & 100.00                                                                                        \\
sys6-v0\_144        & 10                             & 62                             & 26                               & 0.015                                                                                 & {-}                                                                                   & 5714                                                                                    & 622                                                                                       & 100.00                                                                                       & 100.00                                                                                        \\
rd73\_141           & 10                             & 64                             & 27                               & 0.000                                                                                 & {-}                                                                                   & 5898                                                                                    & 642                                                                                       & 100.00                                                                                       & 100.00                                                                                        \\
parity\_247         & 17                             & 16                             & 0                                & 0.000                                                                                 & {-}                                                                                   & 4401                                                                                    & 274                                                                                       & 100.00                                                                                       & 100.00                                                                                        \\ \bottomrule
\end{tabular}
\caption{Results on the `RevLib' instances on the star coupling graph. We compare the performance of the base model $(SPP)$ with the reduced model $(RSPP)$. Times are clocktimes given in seconds. \label{Tab:RevlibStar}}
\end{table}

\begin{table}[H]
\centering
\setlength{\tabcolsep}{3pt}
\scriptsize
\begin{tabular}{@{}lrrrSSrrSS@{}}
\toprule
\textbf{Benchmark}  & \multicolumn{1}{c}{\textbf{\boldmath $n$}} & \multicolumn{1}{c}{\textbf{\boldmath $m$}} & \multicolumn{1}{c}{\textbf{OPT}} & \multicolumn{1}{c}{\textbf{\begin{tabular}[c]{@{}c@{}}time\\ \boldmath $(RSPP')$\end{tabular}}} & \multicolumn{1}{c}{\textbf{\begin{tabular}[c]{@{}c@{}}time\\ \boldmath$(SPP)$\end{tabular}}} & \multicolumn{1}{c}{\textbf{\begin{tabular}[c]{@{}c@{}}\#var \\ \boldmath$(RSPP')$\end{tabular}}} & \multicolumn{1}{c}{\textbf{\begin{tabular}[c]{@{}c@{}}\#const \\ \boldmath $(RSPP')$\end{tabular}}} & \multicolumn{1}{c}{\textbf{\begin{tabular}[c]{@{}c@{}}reduction \\ \#var (\%)\end{tabular}}} & \multicolumn{1}{c}{\textbf{\begin{tabular}[c]{@{}c@{}}reduction\\ \#const (\%)\end{tabular}}} \\ \midrule
graycode6\_47       & 6                              & 5                              & 1                                & 0.015                                                                                 & 0.250                                                                               & 655                                                                                     & 77                                                                                        & 97.92                                                                                        & 97.86                                                                                         \\
graycode6\_48       & 6                              & 5                              & 1                                & 0.000                                                                                 & 0.235                                                                               & 655                                                                                     & 77                                                                                        & 97.92                                                                                        & 97.86                                                                                         \\
decod24-enable\_124 & 6                              & 21                             & 4                                & 0.016                                                                                 & 1.500                                                                               & 2703                                                                                    & 317                                                                                       & 97.92                                                                                        & 97.90                                                                                         \\
decod24-enable\_125 & 6                              & 21                             & 4                                & 0.000                                                                                 & 1.297                                                                               & 2703                                                                                    & 317                                                                                       & 97.92                                                                                        & 97.90                                                                                         \\
decod24-bdd\_294    & 6                              & 24                             & 5                                & 0.015                                                                                 & 1.485                                                                               & 3087                                                                                    & 362                                                                                       & 97.92                                                                                        & 97.91                                                                                         \\
mod5adder\_129      & 6                              & 71                             & 15                               & 0.032                                                                                 & 5.031                                                                               & 9103                                                                                    & 1067                                                                                      & 97.92                                                                                        & 97.91                                                                                         \\
mod5adder\_128      & 6                              & 77                             & 14                               & 0.031                                                                                 & 5.016                                                                               & 9871                                                                                    & 1157                                                                                      & 97.92                                                                                        & 97.91                                                                                         \\
decod24-enable\_126 & 6                              & 86                             & 16                               & 0.031                                                                                 & 5.844                                                                               & 11023                                                                                   & 1292                                                                                      & 97.92                                                                                        & 97.91                                                                                         \\
xor5\_254           & 6                              & 5                              & 1                                & 0.000                                                                                 & 0.266                                                                               & 655                                                                                     & 77                                                                                        & 97.92                                                                                        & 97.86                                                                                         \\
ex1\_226            & 6                              & 5                              & 1                                & 0.000                                                                                 & 0.265                                                                               & 655                                                                                     & 77                                                                                        & 97.92                                                                                        & 97.86                                                                                         \\
4mod5-bdd\_287      & 7                              & 23                             & 4                                & 0.016                                                                                 & 72.110                                                                              & 5081                                                                                    & 485                                                                                       & 99.58                                                                                        & 99.58                                                                                         \\
alu-bdd\_288        & 7                              & 28                             & 5                                & 0.016                                                                                 & 83.844                                                                              & 6181                                                                                    & 590                                                                                       & 99.58                                                                                        & 99.58                                                                                         \\
ham7\_106           & 7                              & 49                             & 8                                & 0.031                                                                                 & 143.265                                                                             & 10801                                                                                   & 1031                                                                                      & 99.58                                                                                        & 99.58                                                                                         \\
ham7\_105           & 7                              & 65                             & 14                               & 0.031                                                                                 & 217.359                                                                             & 14321                                                                                   & 1367                                                                                      & 99.58                                                                                        & 99.58                                                                                         \\
ham7\_104           & 7                              & 83                             & 8                                & 0.078                                                                                 & 282.453                                                                             & 18281                                                                                   & 1745                                                                                      & 99.58                                                                                        & 99.58                                                                                         \\
rd53\_137           & 7                              & 66                             & 10                               & 0.047                                                                                 & 223.981                                                                             & 14541                                                                                   & 1388                                                                                      & 98.14                                                                                        & 99.58                                                                                         \\
rd53\_139           & 8                              & 36                             & 8                                & 0.063                                                                                 & {-}                                                                                   & 12556                                                                                   & 1010                                                                                      & 99.93                                                                                        & 99.93                                                                                         \\
rd53\_138           & 8                              & 44                             & 10                               & 0.062                                                                                 & {-}                                                                                    & 15340                                                                                   & 1234                                                                                      & 99.94                                                                                        & 99.94                                                                                         \\
mini\_alu\_305      & 10                             & 57                             & 14                               & 0.218                                                                                 & {-}                                                                                    & 41997                                                                                   & 2567                                                                                      & 100.00                                                                                       & 100.00                                                                                        \\
sys6-v0\_144        & 10                             & 62                             & 13                               & 0.141                                                                                 & {-}                                                                                    & 45677                                                                                   & 2792                                                                                      & 100.00                                                                                       & 100.00                                                                                        \\
rd73\_141           & 10                             & 64                             & 14                               & 0.095                                                                                 & {-}                                                                                    & 47149                                                                                   & 2882                                                                                      & 100.00                                                                                       & 100.00                                                                                        \\
parity\_247         & 17                             & 16                             & 1                                & 0.108                                                                                 & {-}                                                                                    & 65896                                                                                   & 2178                                                                                      & 100.00                                                                                       & 100.00                                                                                        \\ \bottomrule
\end{tabular}
\caption{Results on the `RevLib' instances on the biclique coupling graph. We compare the performance of the base model $(SPP)$ with the reduced model $(RSPP)$. Times are clocktimes given in seconds. \label{Tab:RevlibBiclique}}
\end{table}

\subsection{Results on random instances} \label{Subsec:ResultsRandom} 
From Table~\ref{Tab:RevlibStar} and~\ref{Tab:RevlibBiclique} we observe that the RevLib instances can be easily solved by our symmetry reduced formulation. To test the performance on larger instances, we consider the random data set, consisting of quantum circuits with up to 100 qubits and 1837 quantum gates. For the cycle coupling graph, we have seen that we could only solve instances up to~$n = 8$. Therefore, we do not include the cycle coupling graph anymore for the random data set. For the same reason, we do no longer consider the base model~\eqref{Prob:SPP}. 

Table~\ref{Tab:RandomStar} and~\ref{Tab:RandomBiclique} show the performance of our symmetry-reduced NNCP formulation on Random Class I and Random Class II for both the star and biclique coupling graph. Next to the total {\color{black} average} solution time, which is given in the column `time $(RSPP)$', we show in the column `time constr.' the {\color{black} average} time that is required to construct the LP-instance. {\color{black}Also, we add the column `$|\mathbb{S}_n(\mathcal{F})|$', which shows the average order of the subgroup $\mathbb{S}_n(\mathcal{F})$ (recall that this quantity only depends on the quantum circuit and not on the coupling graph).}  Each row in the tables corresponds to the average value over 5 randomly generated instances of that type. In Figure~\ref{Fig:Random} we plot the averaged total computation time, i.e., construction and solution time, compared to $n$ and $m$ for both coupling graphs and random classes.

For the star coupling graph, we see that we can easily solve all instances from Random Class~I within on average 25 seconds, while at most 90 seconds are needed to construct the model. For Random Class II, we can solve up to $n = 100$, however, when $m$ is too large, the PC runs out of memory. For the biclique coupling graph on Random Class I, we can solve instances up to $n = 40$ within the time span of 2 hours, whereas for Random Class II the instances with large $m$ cannot be solved anymore.

The sum of solution and construction times on the biclique graphs is significantly higher than on the star graphs, see Figure~\ref{Fig:Random}. The tables reveal that the solution times on the former are an order of magnitude 2 higher. This can be explained by the difference in the order of $\Aut(\Coup(E))$, as explained in Section~\ref{Subsec:ResultsRevlib}. The construction times, however, heavily deviate among the instances on the star and the biclique coupling graph. Indeed, the smaller automorphism group increases the number of orbits. For each of these orbits, one needs to evaluate the orbitals of the group action of~$B_{\tau}$ on $E$. Hence, the negative effects of having a smaller number of symmetries and a larger number of edges, strengthen one another and result in large construction times when $n$ and $m$ increase.
 
\begin{figure}[H]
  \centering
  \includegraphics[scale=0.55]{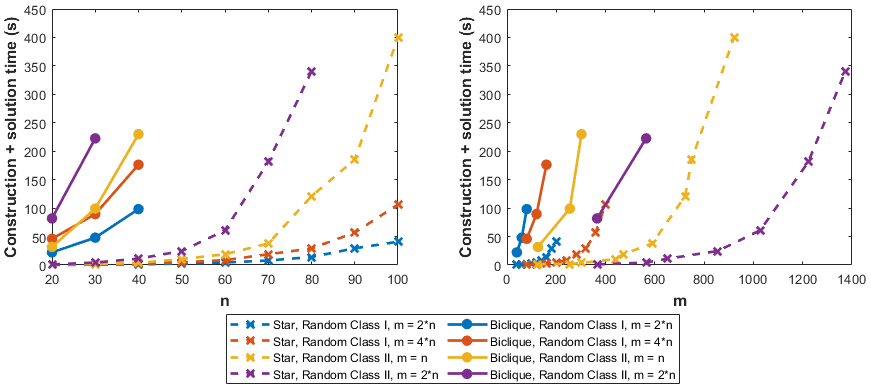}
  \caption{Overview of total average computation times (construction + solution time) of random instances with respect to $n$ and $m$. Each data point displays the average over 5 randomly generated instances of that type. \label{Fig:Random}}
\end{figure}

When comparing Random Class I and II, we do not observe significant structural differences. It seems to be primarily the magnitude of $n$ and $m$ that influences the complexity of the instance. Due to the construction, $m$ grows more rapidly with respect to $n$ for Random Class II than for Random Class I. This effect can be observed from Figure~\ref{Fig:Random}, where we observe that for fixed $n$, an instance from Random Class II on average requires more computation time. 

{\color{black}We observe for both Random Class I and II that the average order of the subgroup $\mathbb{S}_n(\mathcal{F})$ is quite small, although the subgroup is often not trivial. This implies that part of the symmetry reduction is due to the underlying relationship between the quantum gates in the circuit. However, when comparing these average orders to the order of $\Aut(\Coup(E))$ (which are $(n-1)!$ and $2(n-2)!$ for the star and biclique graph, respectively), still the vast majority of the reduction is due to the symmetries in the coupling graph. }

The largest quantum circuit that we can successfully solve contains 100 qubits and 1047 quantum gates. Observe that the unreduced model of this instance would embrace subgraphs of $100!$ vertices, hence solving this model is infeasible. 

\begin{table}[H]
\centering
\scriptsize
\setlength{\tabcolsep}{5pt}
\begin{tabular}{@{}rrSSSScrrSSSS@{}}
\cmidrule(r){1-6} \cmidrule(l){8-13}
\multicolumn{6}{c}{\textbf{Random Class I}}                                                                                                                                       & \textbf{} & \multicolumn{6}{c}{\textbf{Random Class II}}                                                                                                                                     \\
\cmidrule(r){1-6} \cmidrule(l){8-13} 
\textbf{\boldmath $n$} & \textbf{\boldmath $m$} & \textbf{OPT} & \textbf{\begin{tabular}[c]{@{}c@{}}time\\ \boldmath $(RSPP')$\end{tabular}} & \textbf{\begin{tabular}[c]{@{}c@{}}time\\ constr.\end{tabular}} & {\color{black} \boldmath $|\mathbb{S}_n (\mathcal{F})|$} &  \textbf{} & \textbf{\boldmath $n$} & \textbf{\boldmath $m$} & \textbf{OPT} & \textbf{\begin{tabular}[c]{@{}c@{}}time\\ \boldmath $(RSPP')$\end{tabular}} & \textbf{\begin{tabular}[c]{@{}c@{}}time\\ constr.\end{tabular}} & {\color{black} \boldmath $|\mathbb{S}_n (\mathcal{F})|$}  \\ \cmidrule(r){1-6} \cmidrule(l){8-13} 
20           & 40           & 29.6           & 0.031                                                             & 0.088   & {\color{black}1.2}                                                        &           & 20           & 125.6        & 35.0           & 0.119                                                             & 1.425       & {\color{black}1.2}                                                    \\
20           & 80           & 65.2           & 0.056                                                             & 0.134 & {\color{black}1}                                                              &           & 20           & 365.6        & 80.8           & 0.334                                                             & 0.712        & {\color{black}1}                                                   \\
30           & 60           & 52.4           & 0.106                                                             & 0.274      & {\color{black}1}                                                        &           & 30           & 255          & 59.6           & 0.544                                                             & 1.189             & {\color{black}1.4}                                              \\
30           & 120          & 101.2          & 0.243                                                             & 0.551  & {\color{black}1}                                                          &           & 30           & 564.6        & 126.4          & 1.350                                                             & 2.940              & {\color{black}1}                                             \\
40           & 80           & 72.4           & 0.282                                                             & 0.820      & {\color{black}1}                                                      &           & 40           & 302.2        & 76.6           & 1.150                                                             & 2.482                   &  {\color{black}30.6}                                        \\
40           & 160          & 144.4          & 0.631                                                             & 1.556     & {\color{black}1}                                                       &           & 40           & 652.4        & 159.8          & 3.150                                                             & 8.081                   &  {\color{black}1}                                       \\
50           & 100          & 91.2           & 0.569                                                             & 1.971     & {\color{black}5.6}                                                       &           & 50           & 441.4        & 104.6          & 3.272                                                             & 7.223       & {\color{black}2.6}                                                    \\
50           & 200          & 184.6          & 1.312                                                             & 3.336 & {\color{black}1}                                                           &           & 50           & 854.8        & 203.8          & 8.091                                                             & 16.474          & {\color{black}1}                                                \\
60           & 120          & 112.8          & 1.025                                                             & 3.349    & {\color{black}2.4}                                                        &           & 60           & 471          & 122.8          & 5.737                                                             & 13.121                & {\color{black}27}                                          \\
60           & 240          & 222.4          & 2.447                                                             & 6.162 & {\color{black}1}                                                           &           & 60           & 1027.4       & 247.6          & 16.903                                                            & 44.061                   & {\color{black}1}                                       \\
70           & 140          & 132.0          & 1.769                                                             & 6.135      & {\color{black}3}                                                      &           & 70           & 589.4        & 145.2          & 10.838                                                            & 26.888                    & {\color{black}6}                                      \\
70           & 280          & 264.6          & 4.662                                                             & 14.194           & {\color{black}1}                                                &           & 70           & 1223         & 292.8          & 31.875                                                            & 149.451                                  & {\color{black}1}                       \\
80           & 160          & 151.0          & 3.313                                                             & 10.704  & {\color{black}1.4}                                                         &           & 80           & 722.4        & 171.6          & 23.634                                                            & 97.156                       & {\color{black}6.28}                                   \\
80           & 320          & 304.2          & 7.775                                                             & 21.305      & {\color{black}1}                                                     &           & 80           & 1372.8       & 333.6          & 32.600                                                            & 307.844                 & {\color{black}1}                                        \\
90           & 180          & 172.0          & 4.809                                                             & 24.524    & {\color{black}2.2}                                                       &           & 90           & 750          & 184.8          & 22.312                                                            & 162.915                       & {\color{black}2016.6}                                  \\
90           & 360          & 343.8          & 14.681                                                            & 42.293  &{\color{black}1}                                                          &           & 90           & 1602.8       & {-}        & {-}                                                             & {-}                    & {\color{black}1}                                       \\
100          & 200          & 191.8          & 9.106                                                             & 31.802   & {\color{black}3.2}                                                        &           & 100          & 921.2        & 218.8          & 36.966                                                            & 363.073            & {\color{black}2021.4}                                             \\
100          & 400          & 385.0          & 21.066                                                            & 85.295   & {\color{black}1}                                                        &           & 100          & 1709.6       & {-}        & {-}                                                             & {-}            & {\color{black}1}                                               \\ \cmidrule(r){1-6} \cmidrule(l){8-13} 
\end{tabular}
\caption{Results on the random instances on the star coupling graph. Each row shows the average values over 5 randomly generated instances. Times are clocktimes given in seconds. \label{Tab:RandomStar}}
\end{table}
\vspace{-0.5cm}
\begin{table}[H]
\centering
\scriptsize
\setlength{\tabcolsep}{5pt}
\begin{tabular}{@{}rrSSSScrrSSSS@{}}
\cmidrule(r){1-6} \cmidrule(l){8-13}
\multicolumn{6}{c}{\textbf{Random Type I}}                                                                                                                                       & \textbf{} & \multicolumn{6}{c}{\textbf{Random Type II}}                                                                                                                                      \\ \cmidrule(r){1-6} \cmidrule(l){8-13} 
\textbf{ \boldmath $n$} & \textbf{\boldmath$m$} & \textbf{OPT} & \textbf{\begin{tabular}[c]{@{}c@{}}time\\ \boldmath$(RSPP')$\end{tabular}} & \textbf{\begin{tabular}[c]{@{}c@{}}time\\ constr.\end{tabular}} & {\color{black} \boldmath $|\mathbb{S}_n (\mathcal{F})|$} & \textbf{} & \textbf{\boldmath$n$} & \textbf{\boldmath$m$} & \textbf{OPT} & \textbf{\begin{tabular}[c]{@{}c@{}}time\\ \boldmath$(RSPP')$\end{tabular}} & \textbf{\begin{tabular}[c]{@{}c@{}}time\\ constr.\end{tabular}} & {\color{black} \boldmath $|\mathbb{S}_n (\mathcal{F})|$} \\ \cmidrule(r){1-6} \cmidrule(l){8-13} 
20           & 40           & 20.6         & 1.588                                                             & 12.029     & {\color{black}1.2}                                                      &           & 20           & 125.6        & 27.6         & 4.188                                                             & 17.270         & {\color{black}1.2}                                                  \\
20           & 80           & 43.4         & 2.590                                                             & 14.217 & {\color{black}1}                                                          &           & 20           & 365.6        & 66.8         & 15.113                                                            & 30.584     & {\color{black}1}                                                      \\
30           & 60           & 38.4         & 9.675                                                             & 374.035   & {\color{black}1}                                                       &           & 30           & 255          & 50.0         & 49.175                                                            & 413.975    & {\color{black}1.4}                                                      \\
30           & 120          & 71.2         & 18.322                                                            & 390.226   & {\color{black}1}                                                       &           & 30           & 564.6        & 107.2        & 115.350                                                           & 829.710           & {\color{black}1}                                               \\
40           & 80           & 54.2         & 44.053                                                            & 3872.272    & {\color{black}1}                                                     &           & 40           & 302.2        & 61.7         & 168.276                                                           & 2800.738               & {\color{black}30.6}                                          \\
40           & 160          & 109.4        & 66.884                                                            & 3989.450  & {\color{black}1}                                                       &           & 40           & 652.4        & {-}            & {-}                                                                 & {-}                       & {\color{black}1}                                         \\ \cmidrule(r){1-6} \cmidrule(l){8-13} 
\end{tabular}
\caption{Results on the random instances on the biclique coupling graph. Each row shows the average values over 5 randomly generated instances. Times are clocktimes given in seconds. \label{Tab:RandomBiclique}}
\end{table}

\section{Conclusions}
In this paper we study an exact method for solving the NNCP in the gated quantum computing model by exploiting symmetries in the underlying formulation.

Starting from the shortest path formulation introduced by~\cite{MatsuoYamashita}, see~\eqref{Prob:SPP}, we study the algebraic structure of the underlying graph in Section~\ref{Sec:ExploitSym}. This graph is composed of a series of Cayley graphs of the symmetric group $\mathbb{S}_n$ generated by the transpositions in the coupling graph of the quantum system. We show that $\mathbb{S}_n \times \Aut(\Coup(E))$ is the full automorphism group of such Cayley graphs
for almost all coupling graphs.
Although the automorphism groups of specific Cayley graphs generated by transpositions have been studied before in the literature, we do not make any assumption on the underlying coupling graph apart from being connected. Next, we show how these subgroups are merged into a subgroup $G_X$ of the automorphism group of the entire graph, see~\eqref{Eq:GXreformulated}. One component of this subgroup is determined by the algebraic structure of the coupling graph, while the other component relies on a so-called fixing pattern $\mathcal{F}$ following from the quantum gates in the circuit, see Definition~\ref{Def:fixingpattern}. The orbit and orbital structures of the action of this group on the graph are also studied, leading in particular to an overview of the cardinalities of the corresponding quotients, see Table~\ref{Table:quotients}. 

By exploiting the convexity of~\eqref{Prob:SPP}, we reduce the symmetries in the formulation by averaging over all symmetric solutions using the Reynolds operator, see~\eqref{Def:Reynolds}. This leads to a more compact equivalent formulation~\eqref{Prob:RSPP} and its scaled variant~\eqref{Prob:RSPP_scale}. We show that this formulation is equivalent to a generalized network flow problem~\eqref{Prob:GNFP}. Due to the in-depth analysis on the orbit and orbital structure, these formulations can be explicitly constructed from scratch without the need to first construct the exponentially large Cayley graphs. A direct theoretical implication of our approach are the complexity results of Theorem~\ref{Thm:Complexity} and Corollary~\ref{Cor:bicliquecomplexity}, which reveal a class of polynomial time solvable special cases of the NNCP. 

The gain of using our approach compared to the base model~\eqref{Prob:SPP} is most vibrant in case the fixing pattern is less restrictive and the coupling graph is (highly) symmetric. We test our approach on three types of coupling graphs, for which we explicitly derive the key ingredients of our formulation, see Table~\ref{Tab:summarycoupling}. Our numerical results show that the gain in efficiency due to the exploitation of symmetries is very large. For each of the~84 real and~180 random instances, the total reduction in the number of variables and constraints is at least 90\% and 89.8\%, respectively, and this number grows with $n$ and $m$. The computation times are significantly reduced compared to the unreduced model, resulting in solving NNCP instances that are much larger than the ones considered so far in the literature. The largest instance we can solve contains 100~qubits and $1047$~quantum gates.

\medskip

Given that we are only at the beginning of the quantum era, related optimization problems such as the NNCP are likely to remain important in the near future. Based on the successful implementation of our symmetry-reduced solution approach, it would be interesting to consider the NNCP on other quantum architectures having a large symmetry group. 

{\footnotesize
\bibliography{QC}
}
\newpage
\appendix 
\section{Quantum gates and their two-qubit decompositions} \label{App:Gates}
Since the NNCP is only well-defined when a quantum circuit consists solely of one- or two-qubit gates, we have to decompose all gates that act on more than two gates. As indicated in Section~\ref{Sec:CompResults}, this task can be completed in lots of ways and performing this decomposition optimally can be seen as a research problem in itself. In this paper we apply the decomposition method used in~\cite{MulderijEtAl}, although the authors of~\cite{MulderijEtAl} already indicated that this method might be open for improvement. 

The quantum circuits that we consider in our experiments consist of the following types of quantum gates: one-qubit gates, two-qubit gates, three-qubit Peres gates, three- and four-qubit Fredkin gates and three-, four- and five-qubit Toffoli gates. Commonly used one-qubit gates are the Hadamard gate and the Pauli-gates, e.g., the Pauli-$X$-gate. When applying the Hadamard gate to a qubit in any state, it brings the qubit in a superposition state where it has an equal probability to be 0 or 1 upon measurement. The Hadamard gate in a quantum circuit is depicted as \raisebox{0.3em}{\Qcircuit @C=0.7em @R=1.4em   {
                & \gate{H} & \qw
            }}. The Pauli-$X$-gate is also known as the NOT gate and can be seen as its quantum analog. The NOT-gate is depicted as \raisebox{0.3em}{\Qcircuit @C=0.7em @R=1.4em   {
                & \targ & \qw
            }}. 
            
The most commonly used two-qubit gates are depicted in Figure~\ref{Fig:2qubit}. The controlled-NOT gate, also known as CNOT or Feynman gate, acts on a control qubit and a target qubit. If the control qubit is in state $|1\rangle$, a NOT-gate is applied to the target qubit, otherwise nothing happens. The SWAP gate swaps the states of the two qubits where it acts on. The controlled-$V$ and controlled-$V^\dag$ act similarly to the controlled-NOT gate, with the only difference that the unitary operation $V$ or~$V^\dag$ is applied to the target qubit. The operation $V$ and $V^\dag$ are the square root of the NOT-gate and its Hermitian conjugate, respectively. That is, if two controlled-$V$ gates are placed in succession, the result is similar to a controlled-NOT gate, while the identity gate is obtained when applying a controlled-$V$ and a controlled-$V^\dag$ gate in succession. 

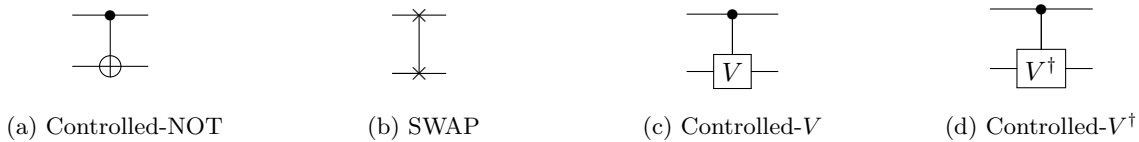
\begin{figure}[H]
\centering
    \begin{subfigure}[b]{0.2\textwidth}
         \centering
         \begin{tikzpicture}
            \node at (0,0) {$\begin{aligned}
             \Qcircuit @C=1em @R=1.4em   {
                & \ctrl{1} & \qw \\
                & \targ & \qw 
            }
            \end{aligned}$}; 
        \end{tikzpicture}
        \vspace{0.15cm}
        \caption{Controlled-NOT}
     \end{subfigure} \hfill
     \begin{subfigure}[b]{0.2\textwidth}
         \centering
         \begin{tikzpicture}
            \node at (0,0) {$\begin{aligned}
             \Qcircuit @C=1em @R=2.2em   {
                & \qswap & \qw \\
                & \qswap \qwx \ghost{$V^\dag$}& \qw 
            }
            \end{aligned}$};
        \end{tikzpicture}
        \vspace{0.2cm}
        \caption{SWAP}
     \end{subfigure}
     \hfill
     \begin{subfigure}[b]{0.2\textwidth}
         \centering
         \begin{tikzpicture}
            \node at (0,0) {$\begin{aligned}
             \Qcircuit @C=1em @R=1.4em   {
                & \ctrl{1} & \qw \\
                & \gate{V} \qwx & \qw 
            }
            \end{aligned}$}; 
        \end{tikzpicture}
        \caption{Controlled-$V$}
     \end{subfigure}
     \hfill
     \begin{subfigure}[b]{0.2\textwidth}
         \centering
         \begin{tikzpicture}
            \node at (0,0) {$\begin{aligned}
             \Qcircuit @C=1em @R=1.4em   {
                & \ctrl{1} & \qw \\
                & \gate{V^\dag} \qwx & \qw 
            }
            \end{aligned}$}; 
        \end{tikzpicture}
        \caption{Controlled-$V^\dag$}
     \end{subfigure}
    
    \caption{Overview of commonly used two-qubit quantum gates. \label{Fig:2qubit}}
\end{figure}

A Toffoli gate~\cite{Toffoli} is the multiple-control NOT gate. Acting on several control qubits and a single target qubit, a NOT gate is applied to the target qubit if all the control qubits are in state~$|1\rangle$. The three-qubit Toffoli gate is depicted in Figure~\ref{Fig:t3}, along with a possible decomposition into two-qubit gates, following the approach of~\cite{BarencoEtAl}.

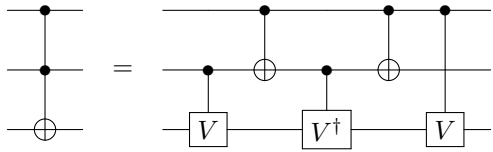
\begin{figure}[H]
\centering
    \begin{tikzpicture}
        \node at (0,0) {$\begin{aligned} \Qcircuit @C=1em @R=0.8em @!R {
& \ctrl{1} & \qw & & & & \qw & \ctrl{1} & \qw & \ctrl{1} & \ctrl{2} & \qw \\
& \ctrl{1} & \qw & &\quad = \qquad &  & \ctrl{1} & \targ & \ctrl{1} & \targ &  \qw & \qw\\
& \targ & \qw &  & & & \gate{V} & \qw & \gate{V^\dag} & \qw & \gate{V} & \qw
} \end{aligned}$}; 
    \end{tikzpicture}
    \caption{Decomposition of multiple-control Toffoli gate with two controls and a single target qubit. \label{Fig:t3}}
\end{figure} 
The Peres gate~\cite{Peres} is obtained from a combination of a two-qubit controlled-NOT gate and standard controlled-NOT gate. Following the approach from~\cite{HungEtAl}, the Peres gate can be decomposed into four two-qubit gates, as shown in Figure~\ref{Fig:p3}. 
\begin{figure}[H]
\centering
    \begin{tikzpicture}
        \node at (0,0) {$\begin{aligned}
        \Qcircuit @C=1em @R=1.4em @!R  {
        & \ctrl{1} & \ctrl{1} & \qw  \\ 
        & \ctrl{1} & \targ & \qw  \\
        & \targ & \qw & \qw 
    }
    \end{aligned} \quad = \quad  \begin{aligned}
        \Qcircuit @C=1em @R=0.8em @!R  {
        & \qw     & \ctrl{2} & \ctrl{1} & \qw & \qw  \\ 
        & \ctrl{1} & \qw & \targ  & \ctrl{1} & \qw \\
        & \gate{V^\dag} & \gate{V^\dag} & \qw & \gate{V} & \qw 
    }
    \end{aligned}$}; 
    \end{tikzpicture}
    \caption{Decomposition of Peres gate on three qubits. \label{Fig:p3}}
\end{figure}
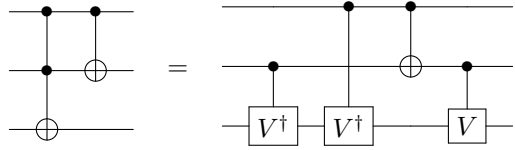

The Fredkin gate~\cite{FredkinToffoli} operates on three qubits as a controlled-SWAP gate. If the state of the control qubit is $|1\rangle$, then a SWAP gate on the two target qubits is performed. The decomposition into two-qubit gates that we adapt here is the same as the one considered in ~\cite{MulderijEtAl,Revlib}, see Figure~\ref{Fig:f3}

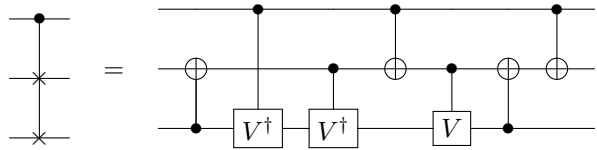
\begin{figure}[H]
\centering
    \begin{tikzpicture}
        \node at (0,0) {$\begin{aligned}
        \Qcircuit @C=1em @R=2em @!R  {
        & \ctrl{1} &  \qw  \\ 
        & \qswap &  \qw  \\
        & \qswap \qwx &  \qw 
    }
    \end{aligned} \quad = \quad \begin{aligned}
        \Qcircuit @C=1em @R=0.8em @!R  {
        & \qw &  \ctrl{2}  & \qw &  \ctrl{1} & \qw &  \qw & \ctrl{1} & \qw \\ 
        & \targ &  \qw & \ctrl{1} & \targ &  \ctrl{1} & \targ & \targ & \qw \\
        & \ctrl{-1} \qwx &  \gate{V^\dag} & \gate{V^\dag} & \qw & \gate{V} & \ctrl{-1} & \qw & \qw
    }
    \end{aligned}$}; 
    \end{tikzpicture}
    \caption{Decomposition of Fredkin gate (controlled swap gate) with one control qubit. \label{Fig:f3}}
\end{figure}
Finally, we consider the four- and five qubit variants of the Fredkin and Toffoli gate. The functionality of these gates is similar to their three-qubit implementation, only the number of control qubits is larger. The four-qubit Fredkin gate can be decomposed as shown by~\cite{Alhagi}, see Figure~\ref{Fig:f4}. Fredkin gates on a larger number of qubits do not appear in our experiments. 
\begin{figure}[H]
\centering
    \begin{tikzpicture}
        \node at (0,0) {$ \begin{aligned}
        \Qcircuit @C=1em @R=2em @!R  {
           & \ctrl{1} & \qw \\
            & \ctrl{1} & \qw \\
           &  \qswap & \qw \\
            & \qswap \qwx & \qw 
        }
    \end{aligned} \quad = \quad \begin{aligned}
    \Qcircuit @C=1em @R=0.8em @!R {
        & \qw & \ctrl{3} & \ctrl{1} & \qw & \ctrl{1} & \qw & \qw & \qw &  \ctrl{2} & \qw & \qw & \qw &  \ctrl{2} & \qw & \qw & \qw \\ 
        & \qw & \qw      & \targ    & \ctrl{2} & \targ & \ctrl{2} &  \ctrl{1} & \qw & \qw & \qw & \ctrl{1} & \qw & \qw & \qw & \qw  & \qw \\
        & \targ & \qw      & \qw  & \qw & \qw & \qw & \targ & \ctrl{1} & \targ & \ctrl{1} & \targ & \ctrl{1} & \targ & \ctrl{1} & \targ & \qw \\
       & \ctrl{-1} & \gate{V}    & \qw & \gate{V^\dag} & \qw & \gate{V} &  \qw & \gate{V^\dag} & \qw & \gate{V} &\qw &  \gate{V^\dag} & \qw & \gate{V} & \ctrl{-1} & \qw \\
    }
    \end{aligned}$}; 
    \end{tikzpicture}
    \caption{Decomposition of Fredkin gate (controlled swap gate) with two control qubits. \label{Fig:f4}}
\end{figure}
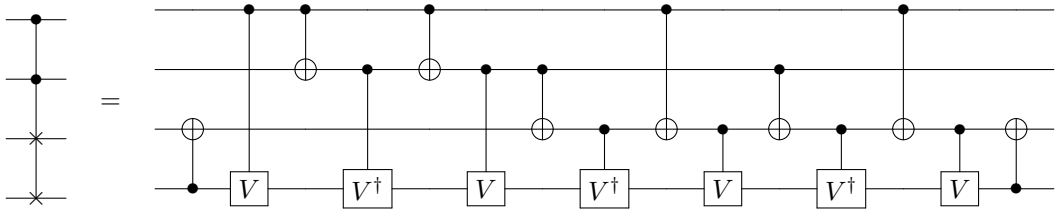

Finally, the four- and five-qubit Toffoli gates are shown in Figure~\ref{Fig:t4} and~\ref{Fig:t5}. The decompositions shown here follow from the construction derived in~\cite{BarencoEtAl}. Toffoli gates on more than five qubits do not appear in our experiments. 
\begin{figure}[H]
\centering
    \begin{tikzpicture}
        \node at (0,0) {$\begin{aligned}
    \Qcircuit @C=1em @R=1.4em @!R  {
        & \ctrl{1} & \qw  \\ 
        & \ctrl{1} & \qw  \\
        & \ctrl{1} & \qw  \\
        & \targ    & \qw  \\
    }
    \end{aligned} \quad = \quad \begin{aligned}
    \Qcircuit @C=1em @R=0.8em @!R {
        & \ctrl{3} & \ctrl{1} & \qw & \ctrl{1} & \qw & \qw & \qw &  \ctrl{2} & \qw & \qw & \qw &  \ctrl{2} & \qw & \qw \\ 
        & \qw      & \targ    & \ctrl{2} & \targ & \ctrl{2} &  \ctrl{1} & \qw & \qw & \qw & \ctrl{1} & \qw & \qw & \qw & \qw \\
        & \qw      & \qw  & \qw & \qw & \qw & \targ & \ctrl{1} & \targ & \ctrl{1} & \targ & \ctrl{1} & \targ & \ctrl{1} & \qw \\
        & \gate{V}    & \qw & \gate{V^\dag} & \qw & \gate{V} &  \qw & \gate{V^\dag} & \qw & \gate{V} &\qw &  \gate{V^\dag} & \qw & \gate{V} & \qw \\
    }
    \end{aligned}$}; 
    \end{tikzpicture}
    \caption{Decomposition of multiple-control Toffoli gate with three controls and a single target qubit. \label{Fig:t4}}
\end{figure}
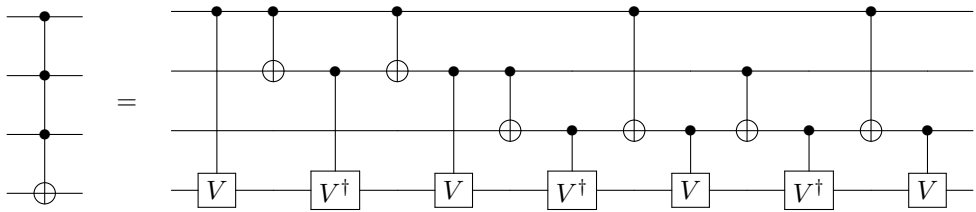

\begin{landscape}

\begin{figure}[H]
\centering
    \begin{tikzpicture}
        \node at (0,0) {$\begin{aligned}
    \Qcircuit @C=0.8em @R=1.4em @!R  {
        & \ctrl{1} & \qw  \\ 
        & \ctrl{1} & \qw  \\
        & \ctrl{1} & \qw  \\
        & \ctrl{1} & \qw  \\
        & \targ    & \qw  
    }
    \end{aligned} \quad = \quad \begin{aligned}
    \Qcircuit @C=0.8em @R=0.8em @!R  {
        & \ctrl{4} & \ctrl{1} & \qw & \ctrl{1} & \qw & \qw & \qw & \ctrl{2} & \qw & \qw & \qw & \ctrl{2} & \qw & \qw & \qw & \ctrl{3} & \qw & \qw & \qw & \ctrl{3} & \qw & \qw & \qw &\ctrl{3} & \qw & \qw & \qw & \ctrl{3} & \qw & \qw \\ 
        & \qw & \targ & \ctrl{3} & \targ & \ctrl{3} & \ctrl{1} & \qw & \qw & \qw &  \ctrl{1} & \qw & \qw & \qw & \qw & \qw & \qw & \qw & \ctrl{2} & \qw & \qw & \qw & \qw & \qw & \qw & \qw & \ctrl{2} & \qw & \qw & \qw & \qw \\
        & \qw & \qw & \qw & \qw & \qw & \targ & \ctrl{2} & \targ & \ctrl{2} & \targ & \ctrl{2} & \targ & \ctrl{2} &  \ctrl{1} & \qw & \qw & \qw & \qw & \qw & \qw & \qw & \ctrl{1} & \qw & \qw & \qw & \qw & \qw & \qw & \qw & \qw\\
        & \qw & \qw & \qw &\qw & \qw & \qw & \qw &\qw & \qw & \qw & \qw & \qw & \qw & \targ & \ctrl{1} & \targ & \ctrl{1} & \targ & \ctrl{1} & \targ & \ctrl{1} & \targ & \ctrl{1} & \targ & \ctrl{1} & \targ & \ctrl{1} & \targ & \ctrl{1} & \qw \\
        & \gate{V}    & \qw  & \gate{V^\dag} & \qw & \gate{V} &\qw & \gate{V^\dag} & \qw & \gate{V} & \qw & \gate{V^\dag} & \qw & \gate{V} & \qw & \gate{V^\dag} & \qw & \gate{V} & \qw & \gate{V^\dag} & \qw & \gate{V} & \qw & \gate{V^\dag} & \qw & \gate{V} & \qw & \gate{V^\dag} & \qw & \gate{V} & \qw
    }
    \end{aligned}$}; 
    \end{tikzpicture}
    \caption{Decomposition of multiple-control Toffoli gate with four controls and a single target qubit. \label{Fig:t5}}
\end{figure}

\end{landscape}

\end{document}